\DeclareFontFamily{U}{mathx}{}
\DeclareFontShape{U}{mathx}{m}{n}{ <-> mathx10 }{}
\DeclareSymbolFont{mathx}{U}{mathx}{m}{n}
\DeclareMathAccent{\widecheck}{0}{mathx}{"71}
\newtheorem{theorem}{Theorem}[section]
\newtheorem{lemma}[theorem]{Lemma}
\newtheorem*{theorem*}{Theorem}
\newtheorem{corollary}[theorem]{Corollary}
\newtheorem{proposition}[theorem]{Proposition}
\newtheorem* {question*}{Question}
\newtheorem{sublemma}{}[theorem]
\newtheorem{conjecture}[theorem]{Conjecture}
\theoremstyle{definition}
\theoremstyle{remark}
\numberwithin{equation}{section}
\newcommand{\si}{{\rm si}}
\newcommand{\cl}{{\rm cl}}
\newcommand{\exim}{\text{EXIM}}
\newcommand{\hatM}{\widehat{\mathcal{M}}}
\newcommand{\veeM}{\widecheck{\mathcal{M}}}
\newcommand{\tM}{\widetilde{\mathcal{M}}}
\begin{document}

\title[Binary matroids with few excluded induced minors]{Classes of binary matroids with small lists of excluded induced minors}

\author{James Dylan Douthitt}
\address{Mathematics Department\\
Louisiana State University\\
Baton Rouge, Louisiana}
\email{jdouth5@lsu.edu}

\author{James Oxley}
\address{Mathematics Department\\
Louisiana State University\\
Baton Rouge, Louisiana}
\email{oxley@math.lsu.edu}

\subjclass{05B35, 05C75}
\date{\today}

\begin{abstract}
In earlier work, we characterized the class of matroids with no $M(C_4)$ as an induced minor and the class of matroids with no member of $\{M(C_4),M(K_4)\}$ as an induced minor. In this paper,   for every two matroids in $\{M(C_4),M(K_4\backslash e),M(K_4),F_7\}$, we determine the class of matroids that have neither of the chosen pair as an induced minor. Additionally, we prove structural lemmas toward characterizing the class of matroids that do not contain $M(K_4)$ as an induced minor.
\end{abstract}

\maketitle

\section{Introduction}
\label{introduction}
The notation and terminology in this paper will follow 
\cite{ox1}. Unless stated otherwise, all graphs and matroids considered here are simple. Thus, every contraction of a set from a matroid is immediately followed by the simplification of the resulting matroid. An \textit{induced restriction} of a matroid $M$ is a matroid $N$ that can be obtained from $M$ by restricting to a flat. An \textit{induced minor} of $M$ is a matroid $N$ that can be obtained from $M$ by a sequence of restrictions to flats and contractions, where each such contraction is followed by a simplification. Equivalently, N can be obtained from M by at most one restriction to a
flat and at most one contraction followed by a simplification, where
these operations can be performed in either order. As noted by Thomas
Zaslavsky (private communication), this means that $M$ has $N$ as an
induced minor if and only if $N$ is the matroid corresponding to an
interval in the lattice of flats of $M$

Given a set $\mathcal{M}$ of binary matroids, we write $\exim(\mathcal{M})$ for the class of binary matroids with no member of $\mathcal{M}$ as an induced minor. In previous work \cite{douthox1, douthox2}, we characterized $\exim(M(C_4))$ and proved the following characterization of $\exim(M(C_4),M(K_4))$.

\begin{theorem}\label{gf(q)-chordal}
    The class $\exim(M(C_4),M(K_4))$ consists of the binary matroids that can be obtained from projective geometries over $GF(2)$ by repeated generalized parallel connections across projective geometries over $GF(2)$.
\end{theorem}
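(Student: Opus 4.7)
The plan is to prove both directions of the characterization. For sufficiency, we verify that $PG(n,2)$ has neither $M(C_4)$ nor $M(K_4)$ as an induced minor: every flat of $PG(n,2)$ is itself a projective subgeometry, so every rank-$3$ flat is $PG(2,2) = F_7$, and every contraction yields $PG(n-1,2)$. We then argue that the property of avoiding both excluded induced minors is preserved by generalized parallel connection across a projective geometry, since any induced minor of $M_1 \oplus_N M_2$ either lives inside one of the pieces (up to the shared subgeometry $N$) or involves the glued structure in a controlled way that can be traced back to an induced minor of one of the summands.

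For necessity, let $M$ be a binary matroid in $\exim(M(C_4),M(K_4))$. We proceed by induction on $|E(M)|$, and begin by invoking the characterization of $\exim(M(C_4))$ from our earlier work to obtain initial structural information about the rank-$3$ flats of $M$. Combining this with the exclusion of $M(K_4)$, we aim to establish that every rank-$3$ flat of $M$ is isomorphic to $F_7$: the rank-$3$ simple binary matroids are the subgeometries of $F_7$, and the two excluded induced minors together eliminate all non-$F_7$ configurations that can occur as flats of $M$. Once this local structure is in place, the inductive step proceeds by finding a proper modular flat $N \cong PG(k,2)$ of $M$ and decomposing $M$ as a generalized parallel connection $M_1 \oplus_N M_2$ with each $M_i$ properly smaller and still in $\exim(M(C_4),M(K_4))$. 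Applying the inductive hypothesis to $M_1$ and $M_2$ then yields the desired decomposition of $M$.

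The main obstacle lies in establishing the existence of such a modular-flat separator when $M$ is not itself a projective geometry. The heuristic is a matroid analogue of the clique-separator theorem for chordal graphs: the exclusions should force enough local projective-geometry structure in $M$ that a maximal proper projective-geometry flat is automatically modular and separating. Extracting this rigorously---in particular, verifying that the candidate flat $N$ is modular in $M$ and that the two ``sides'' of $M$ relative to $N$ genuinely form binary matroids whose generalized parallel connection recovers $M$---is the technical heart of the argument, and will require a careful use of both excluded induced minors to ensure the separator splits off a nontrivial piece at each step.
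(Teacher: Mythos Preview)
The paper does not contain a proof of this theorem; it is quoted from the authors' earlier work \cite{douthox2} (see the sentence preceding Theorem~\ref{gf(q)-chordal}). So there is no in-paper argument to compare your proposal against, and I will assess the proposal on its own.

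Your sufficiency direction is reasonable in outline, though the preservation of the two exclusions under generalized parallel connection across a projective geometry needs an actual argument rather than the phrase ``in a controlled way.'' The key point---that flats of $P_N(M_1,M_2)$ restrict to flats in each $M_i$, and that contractions of elements in $E(N)$ or in $E(M_i)\setminus E(N)$ behave well with respect to the decomposition---is routine but must be written out.

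Your necessity direction contains a concrete error. You assert that, for $M\in\exim(M(C_4),M(K_4))$, ``every rank-$3$ flat of $M$ is isomorphic to $F_7$.'' This is false. The matroid $M(K_4\backslash e)$ is itself in $\exim(M(C_4),M(K_4))$: it is the parallel connection of two copies of $U_{2,3}\cong PG(1,2)$ along a point $PG(0,2)$, so it is built from projective geometries by a generalized parallel connection, yet it is a connected rank-$3$ flat of itself that is not $F_7$. Disconnected rank-$3$ flats such as $U_{2,3}\oplus U_{1,1}$ likewise occur. So the local structure you aim for is too strong, and the subsequent plan to locate a projective-geometry modular cut-flat cannot rest on it.

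Beyond that error, the proposal is really a plan rather than a proof: the step you flag as ``the technical heart''---producing a modular projective-geometry separator when $M$ is not itself projective---is precisely where all the work lies, and nothing in the outline indicates how the two exclusions are used to obtain it. A correct approach (as in \cite{douthox2}) goes through the chordality characterization of $\exim(M(C_4))$ and an analysis of how $M(K_4)$-avoidance constrains connected hyperplanes and contractions, not through a blanket claim about rank-$3$ flats.
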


In Section~\ref{eximSec}, we consider the other five pairs of rank-3 connected binary matroids and describe the binary matroids in which neither member of the pair occurs as an induced minor.  The characterizations of each of these classes include the following.

\begin{theorem}\label{noK4eK4}
    The connected members of $\exim(M(K_4\backslash e),M(K_4))$ consist of all projective geometries, all affine geometries of rank at least three, and all circuits with at least three elements.
\end{theorem}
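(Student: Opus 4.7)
The proof splits into two directions. For the ``easy'' direction, each listed matroid lies in $\exim(M(K_4\backslash e), M(K_4))$ by computing its induced minors via the Zaslavsky correspondence with intervals in the lattice of flats: intervals in $PG(r-1,2)$ are smaller projective geometries; intervals in $AG(r-1,2)$ for $r \geq 3$ are affine or projective geometries over $GF(2)$, since contracting a point of an affine geometry over $GF(2)$ yields a projective geometry; and intervals in $U_{n-1,n}$ with $n \geq 3$ are trivial matroids or smaller circuits $U_{k-1,k}$. None of these is isomorphic to $M(K_4\backslash e)$ or $M(K_4)$.

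For the main direction, I first handle rank at most three. Enumerating subsets of $F_7$ shows that the simple binary matroids of rank three are $U_{3,3}$, $U_{3,4}$, $U_{2,3}\oplus U_{1,1}$, $M(K_4\backslash e)$, $M(K_4)$, and $F_7$; among these, the connected ones are $U_{3,4}$, $M(K_4\backslash e)$, $M(K_4)$, and $F_7$, so excluding the two forbidden matroids leaves the connected $U_{3,4} = AG(2,2)$ and $F_7 = PG(2,2)$, both listed. The rank-one and rank-two connected cases $U_{1,1}$ and $U_{2,3}$ also appear in the listed families. For $r(M) \geq 4$, every rank-three flat of $M$ is itself a rank-three induced minor, so by the enumeration it is isomorphic to one of $F_7$, $U_{3,4}$, $U_{3,3}$, or $U_{2,3}\oplus U_{1,1}$. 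The structural heart of the proof is to rule out the last type and to show that the remaining three types cannot coexist inside the same matroid.

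The first structural lemma asserts that no rank-three flat of $M$ is $U_{2,3}\oplus U_{1,1}$: if $F = \{a, b, a+b, d\}$ were such a flat (with triangle $T = \{a,b,a+b\}$ and off-triangle point $d$), then applying the no-$M(K_4\backslash e)$ hypothesis to each rank-three flat $\cl(\{a,b,x\})$ with $x \in M\setminus F$ forces $a+x, b+x, (a+b)+x \notin M$; a similar analysis of flats $\cl(\{a,d,x\})$ combined with the connectedness of $M$ then produces a forbidden induced minor. The second structural lemma asserts that all rank-three flats of $M$ share a common isomorphism type: if two rank-three flats of different types coexisted, the differing line sizes (three-point lines inside an $F_7$-plane versus two-point lines inside an affine or $U_{3,3}$-plane) would force their intersection to have rank at most one, and a submodular rank analysis of an appropriate interval containing their join produces a forbidden induced minor. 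With these lemmas in hand, the classification concludes: if all rank-three flats of $M$ are $F_7$, then every line of $M$ has three elements and $M = PG(r-1,2)$; if all are $U_{3,4}$, an analogous characterization of affine geometries gives $M = AG(r-1,2)$; and if all are $U_{3,3}$, then $M$ has no 3-circuit, and the unique connected simple binary matroid of rank $r$ with this property is $U_{r,r+1}$. The principal obstacle will be the second structural lemma, where ruling out mixed plane types requires carefully tracking how the planes interact through their intersection and join to produce a forbidden induced minor.
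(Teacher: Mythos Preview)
Your strategy is genuinely different from the paper's. The paper argues by induction on rank: using Lemma~\ref{always3C} to reduce to the $3$-connected case, it observes that every single-element contraction $M/f$ must (by induction) be a projective geometry, an affine geometry, or a circuit, and then a short case analysis on the possible types of $M/f$ (together with a cited lemma classifying matroids all of whose contractions are projective) finishes the proof. Your plan instead attempts to classify $M$ by the isomorphism types of its rank-$3$ flats. That is a natural idea, but as written the proposal has real gaps.

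First, your sketch of the ``first structural lemma'' does not work. If $F=\{a,b,a+b,d\}$ is a plane and $x\notin F$, then the plane $\cl(\{a,b,x\})$ already contains the triangle $\{a,b,a+b\}$; excluding $M(K_4\backslash e)$ and $M(K_4)$ therefore forces this plane to be $F_7$ or $U_{2,3}\oplus U_{1,1}$, not $U_{3,4}$ or $U_{3,3}$. In the $F_7$ case all of $a+x$, $b+x$, $(a+b)+x$ \emph{are} in $M$, contrary to what you assert. So the mechanism you describe for producing a contradiction is not available, and the lemma needs a different argument. Second, the ``second structural lemma'' (all planes have the same type) is precisely the heart of the matter, and ``a submodular rank analysis of an appropriate interval containing their join'' is not a proof; you would need to exhibit, for each pair of coexisting plane types, an explicit interval giving $M(K_4\backslash e)$ or $M(K_4)$. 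Third, your concluding claim in the $U_{3,3}$ case is false as stated: the parallel connection of two copies of $U_{4,5}$ along a point is a connected simple binary matroid in which every rank-$3$ flat is $U_{3,3}$, yet it is not a circuit. You must invoke the $\exim$ hypothesis again here (for instance via Lemma~\ref{always3C} to force $3$-connectedness), and then still prove that no $3$-connected binary matroid of rank at least three has every plane independent; this last step is not immediate. In short, the plane-type approach may be salvageable, but as it stands the two structural lemmas are unproved and one endgame claim is incorrect, so the proposal is an outline rather than a proof.
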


\begin{theorem}\label{noK4eF7}
    The connected members of $\exim(M(K_4\backslash e),F_7)$ consist of $F_7^*$ and all of the matroids $M(C_n)$ and $M(K_n)$ with $n\geq 3$.
\end{theorem}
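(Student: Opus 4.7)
The plan is to verify that $F_7^*$, $M(C_n)$ for $n\ge 3$, and $M(K_n)$ for $n\ge 3$ all lie in $\exim(M(K_4\backslash e),F_7)$, and then to show these are the only connected members by splitting first on rank and, in the high-rank case, on girth.

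For the forward direction, I would catalogue the induced minors of each listed matroid. In $M(C_n)$, every subset of size at most $n-2$ is a flat and yields a free-matroid induced restriction, while each contraction simplifies to $M(C_{n-1})$; hence every induced minor is a free matroid or a smaller cycle matroid, and neither $M(K_4\backslash e)$ nor the non-graphic $F_7$ appears. In $M(K_n)$, flats correspond to vertex partitions and restrict to direct sums of $M(K_i)$'s, and each single-edge contraction simplifies to $M(K_{n-1})$; again no $M(K_4\backslash e)$ arises (the closure of any five edges of $K_4$ is all six of them), and $F_7$ is excluded by non-graphicness. For $F_7^*$, the hyperplanes are the seven $3$-element lines of $F_7$ (restricting to $U_{3,3}$) and the seven $4$-element complements of lines (restricting to $M(C_4)$), while $F_7^*/e \cong M(K_4)$ for every $e$; so every induced minor of $F_7^*$ lies in the previously verified part of the class.

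For the converse, let $M$ be a connected binary matroid in $\exim(M(K_4\backslash e),F_7)$. Low rank is a finite check: rank at most $2$ gives $U_{1,1}$ or $M(K_3)=M(C_3)$, and rank $3$ gives one of $M(C_4), M(K_4\backslash e), M(K_4), F_7$, with the exclusion leaving $M(C_4)$ and $M(K_4)$. For rank at least $4$, I split by the girth of $M$. If the girth is $3$, a triangle $T$ and an element $e\notin T$ span a rank-$3$ flat which, being a connected rank-$3$ binary matroid containing a triangle and avoiding $M(K_4\backslash e)$ and $F_7$, must be $M(K_4)$; an induction on $|E(M)|$, contracting an element of $T$ and applying the hypothesis to $\si(M/e)$, then forces $M=M(K_n)$. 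If the girth is at least $5$, every rank-$3$ flat is a free matroid $U_{3,3}$, and one proves $M$ has a unique circuit by showing that two distinct circuits would produce, via binary symmetric differences within some rank-$3$ flat, a $3$- or $4$-circuit contradicting the girth bound; hence $M=M(C_n)$. If the girth is $4$, the rank-$3$ flat spanned by any $4$-circuit must be $M(C_4)$ (since $M(K_4)$ has a triangle and $M(K_4\backslash e), F_7$ are excluded), and a rigidity argument on how such $M(C_4)$-flats can fit together forces $M=F_7^*$, with rank exactly $4$ and exactly $7$ elements.

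The main obstacle will be the girth-$4$ case. One must rule out every other triangle-free connected binary matroid of rank at least $4$ in the class; for instance, $AG(3,2)$ has $AG(3,2)/e\cong F_7$ and so contains $F_7$ as an induced minor, disqualifying it. Making this rigidity precise, by carefully tracking how the $4$-circuits and $M(C_4)$-flats in a triangle-free binary matroid are forced to organise into the dual Fano configuration so that any proper extension of $F_7^*$ yields $F_7$ upon a single contraction or an $M(K_4\backslash e)$-flat after a restriction, is where the bulk of the work will lie.
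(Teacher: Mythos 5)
Your forward direction is fine, and your reduction of the low-rank cases is fine, but the converse for rank at least $4$ has genuine gaps in all three girth cases, and the overall strategy (case analysis on girth, using only restrictions to low-rank flats) cannot succeed without the contraction-based machinery it currently lacks. The clearest failure is the girth-at-least-$5$ case: the cycle matroid of the Petersen graph is $3$-connected, binary, has girth $5$, and has many pairs of distinct circuits, yet no symmetric difference of circuits ever produces a $3$- or $4$-element circuit, and indeed \emph{every} rank-$3$ flat (in fact every flat of small rank) is a direct sum of free matroids and small circuits. So your proposed mechanism yields no contradiction, even though $M(\mathrm{Petersen})$ must be excluded from the class; the obstruction $M(K_4\backslash e)$ only appears after contractions, which your argument never performs. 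The girth-$3$ case also starts from a false premise: for a triangle $T$ and $e\notin T$, the flat $\cl(T\cup\{e\})$ need not be connected (in $M(K_5)$, take $T$ on vertices $\{1,2,3\}$ and $e=45$; the flat is $M(K_3)\oplus U_{1,1}$), so it need not be $M(K_4)$; and even granting that $\si(M/e)\cong M(K_{n-1})$ for all $e$, the reconstruction of $M$ as $M(K_n)$ is a nontrivial extremal step (the paper needs Murty's theorem for an analogous step elsewhere). Finally, the girth-$4$ case, which you acknowledge is the bulk of the work, is exactly where all the triangle-free non-examples live ($M(K_{3,3})$, the cube graph, large bipartite graphs, cographic matroids, \dots), and no argument is offered.

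For comparison, the paper takes a structural rather than a girth-based route: since $M$ has no $F_7$-minor, Seymour's characterization writes $M$ as a $2$-sum of regular matroids and copies of $F_7^*$, so a $3$-connected $M$ other than $F_7^*$ is regular; Seymour's Decomposition Theorem then reduces to graphic pieces after $R_{10}$ and the cographic-not-graphic pieces are shown to force an $M(K_4\backslash e)$ induced minor (Lemmas~\ref{m*k33} and~\ref{notcograph}), and the graphic case is settled by a purely graph-theoretic lemma (Lemma~\ref{noK4e}) whose distance/contraction argument is precisely what handles matroids like $M(\mathrm{Petersen})$. If you want to salvage your approach, you would need to replace the flat-based arguments in the girth-$\ge 4$ cases by arguments that contract down to a small induced minor, which essentially amounts to reproving Lemma~\ref{noK4e} and its cographic analogue.
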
 

\begin{theorem}\label{noC4K4e}
    The connected members of $\exim(M(C_4),M(K_4\backslash e))$ consist of all projective geometries, all projective geometries with a single point deleted, and all cycle matroids of complete graphs.
\end{theorem}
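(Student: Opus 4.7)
The plan is to prove both directions. For the forward inclusion, I compute the induced minors of each listed matroid: every induced minor of $PG(r-1,2)$ is a smaller projective geometry $PG(s-1,2)$; every induced minor of $M(K_n)$ is a direct sum of $M(K_p)$'s, since flats of $M(K_n)$ correspond to vertex partitions and contractions of $M(K_n)$ simplify to smaller complete-graph matroids; and every induced minor of $PG(r-1,2)\backslash e$ is either a $PG(s-1,2)$ or a $PG(s-1,2)\backslash f$, the key point being that $(PG(r-1,2)\backslash e)/f$ simplified equals the full $PG(r-2,2)$ because $e+f$ survives as the parallel-class representative. None of these is $M(C_4)$ or $M(K_4\backslash e)$.

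For the converse, I induct on the rank $r$ of $M$. The base cases $r\le 3$ are handled by enumerating connected simple rank-$r$ binary matroids; among the rank-3 examples only $M(K_4)=PG(2,2)\backslash e$ and $F_7=PG(2,2)$ avoid both excluded minors. For the inductive step with $r\ge 4$, fix $e\in M$ and let $N=M/e$ simplified; it is connected, in the class, and of rank $r-1$, so by induction $N$ is one of $PG(r-2,2)$, $PG(r-2,2)\backslash f$, or $M(K_n)$. Embedding $M$ in $PG(r-1,2)$, let $D\subseteq N$ be the set of ``doubled'' elements---those whose two preimages in $PG(r-1,2)$ both lie in $M$, forming a triangle with $e$. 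A short computation identifies the rank-3 flat of $M$ lying over each rank-2 flat $\ell$ of $N$: over a triangle $\{g,h,g+h\}$ of $N$ the flat of $M$ is $F_7$, $M(K_4)$, $M(K_4\backslash e)$, or $M(K_3)\oplus U_{1,1}$ as $|D\cap\ell|$ is $3,2,1,0$; over a 2-element rank-2 flat $\{g,h\}$ of $N$ it is $M(K_4\backslash e)$, $M(K_3)\oplus U_{1,1}$, or $U_{3,3}$ as $|D\cap\ell|$ is $2,1,0$.

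Now I case on $N$. When $N=M(K_n)$, viewing $D$ as edges of $K_n$, the 2-element-flat analysis forces $D$ to contain no two vertex-disjoint edges, so $D$ is a star or a triangle of $K_n$; the triangle analysis (via a triangle of $N$ meeting $D$ in a single edge) rules out the triangle case and any incomplete star, so $D$ is the full star at a vertex and $M=M(K_{n+1})$. When $N=PG(r-2,2)$, the triangle analysis forces $S:=N\backslash D$ to be closed under binary addition, so $S\cup\{0\}$ is a subspace; writing $M=PG(r-1,2)\backslash Y$ for the missing preimages, $|S|\ge 2$ yields a plane of $PG(r-1,2)$ not through $e$ with $|\pi\cap Y|=2$ and hence $M(K_4\backslash e)$ as an induced restriction, and $|S|=|N|$ makes $e$ a coloop against connectivity, so $|S|\in\{0,1\}$ and $M=PG(r-1,2)$ or $PG(r-1,2)\backslash e'$.

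The remaining case $N=PG(r-2,2)\backslash f$ is the subtlest. For $r=4$, $N=M(K_4)$ is also an $M(K_n)$, so the $M(K_n)$-case yields $M=M(K_5)$. For $r\ge 5$ this case cannot arise: letting $\{f_1,f_2\}$ be the pair of $PG(r-1,2)$ above $f$ (both absent from $M$, with $f_1+f_2=e$), the $M(K_4\backslash e)$-avoidance forces every plane of $PG(r-1,2)$ through the line $\{e,f_1,f_2\}$ to contain a third missing point, while $M(C_4)$-avoidance forces $Y$ to be a cap in $PG(r-1,2)$. Combining these constraints with a second contraction $M/e'$ (which by induction lies in the list) and the earlier analysis produces a contradiction. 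This last reduction, balancing the cap condition, the plane-covering condition, and the rigidity of multiple contractions, is the main obstacle.
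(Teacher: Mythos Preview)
Your approach via single-element contractions is different from the paper's, which instead inducts on a \emph{connected hyperplane} (guaranteed by McNulty--Wu) after first proving that every connected member of the class is round. Your route could in principle work, but the proposal has a genuine gap that propagates through every case.

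The rank-$3$ flat table is wrong in the entry ``$|D\cap\ell|=0$ over a triangle''. If $\ell=\{g,h,g+h\}$ is a triangle of $N$ with no element doubled, the flat of $M$ above $\ell$ is $\{e,\tilde g,\tilde h,\widetilde{g+h}\}$, and whether this is $U_{2,3}\oplus U_{1,1}$ or $M(C_4)$ depends on whether $\tilde g+\tilde h+\widetilde{g+h}$ equals $0$ or $e$. You need the $M(C_4)$-avoidance here to force the additive relation $\epsilon(g)+\epsilon(h)+\epsilon(g+h)=0$ on the ``choice-of-preimage'' function $\epsilon$. Without that constraint, knowing $D$ alone does \emph{not} determine $M$: in the $N=M(K_n)$ case there are many simple binary $M$ with $\si(M/e)=M(K_n)$ and $D$ equal to a full star, and only the $\epsilon$-constraint (together with a shear normalization) forces $M\cong M(K_{n+1})$. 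The same omission bites in the $N=PG(r-2,2)$ case: your claim that $|S|\ge 2$ ``yields a plane with $|\pi\cap Y|=2$'' is false as stated, since without the $M(C_4)$ condition $Y$ can be a projective flat, and then every plane meets $Y$ in $0,1,3$ or $7$ points; one must first deduce that $Y$ is a cap from the missing $M(C_4)$ entry and then argue more carefully. Finally, you openly concede that the case $N=PG(r-2,2)\backslash f$ for $r\ge 5$ is unfinished; ``balancing the cap condition, the plane-covering condition, and the rigidity of multiple contractions'' is a strategy, not a proof, and this is exactly the case that carries most of the weight. Minor additional gap: you assert $\si(M/e)$ is connected without justification; this follows from the roundness lemma the paper proves (Lemma~\ref{eximC4K4eRound}), but you never establish it.
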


Let $N$ be a simple $GF(q)$-representable matroid. Oxley and Whittle \cite{oxwhittle} defined the \textit{$q$-coning}, $A(N)$, of $N$ as the $GF(q)$-representable matroid that is obtained by adding a coloop $p$ to $N$ and then adding every point on each line between $p$ and a point of $N$. We call $p$ the \textit{tip} (or \textit{apex}) of the cone. This construction was originally introduced by Whittle \cite{whittle}, who called the operation a \textit{$q$-lift}. Whenever the field is clear, we will write `coning' in place of `$q$-coning'. 
Similarly, a \textit{tipless coning} of $N$ is the matroid $A(N)\backslash p$. We shall sometimes call $A(N)$ a \textit{tipped coning} of $N$.
For a class $\mathcal{M}$ of $GF(q)$-representable matroids, we write $\hatM$ for the class of matroids obtained from $\mathcal{M}$ by repeated $q$-conings. In Section~\ref{conings}, we prove the following result.

\begin{theorem}\label{coneIM}
    If $\mathcal{M}$ is a class of $GF(q)$-representable matroids closed under taking induced minors, then the class $\hatM$ is closed under taking induced minors.
\end{theorem}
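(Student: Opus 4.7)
The plan is to argue by induction on the number of $q$-conings used to construct $M \in \hatM$ from $\mathcal{M}$. In the base case $M \in \mathcal{M}$, any induced minor lies in $\mathcal{M} \subseteq \hatM$ by hypothesis. For the inductive step, write $M = A(M')$ with $M' \in \hatM$ and assume every induced minor of $M'$ lies in $\hatM$. Because every induced minor is obtained by iterating the atomic operations of restriction to a flat and contraction of a single element followed by simplification, it suffices to show that $\hatM$ is closed under each of these operations applied to $A(M')$.

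For a flat $F$ of $A(M')$, I would split on whether $F$ contains the apex $p$. If $p \in F$, then for each $v \in F \cap M'$ the line $L_v = \cl_{A(M')}(\{p, v\})$ lies in $F$, and every element of $F$ lies on some such $L_v$, so $F = A(F \cap M')$. A short argument using $\cl_{M'}(F \cap M') \subseteq \cl_{A(M')}(F) = F$ shows $F \cap M'$ is a flat of $M'$, hence in $\hatM$ by the inductive hypothesis; then $F$ is a $q$-coning of this flat, so $F \in \hatM$. If $p \notin F$, consider the projection-from-$p$ map $\phi$ sending each non-apex point of $A(M')$ to the unique element of $M'$ on its line through $p$. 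Because $F$ meets each $L_v$ in at most one point (else $p \in F$), the restriction $\phi|_F$ is injective; a rank calculation using $p \notin \cl_{A(M')}(F)$ shows it preserves linear dependencies, giving a matroid isomorphism from $F$ onto $\phi(F)$. A further rank argument shows $\phi(F)$ is a flat of $M'$, so $F$ is isomorphic to an induced restriction of $M' \in \hatM$, putting $F$ into $\hatM$.

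For contraction of a single element of $A(M')$, contracting $p$ collapses each $L_v$ and leaves $v$ as the representative of the parallel class $L_v \setminus \{p\}$, giving $A(M')/p \cong M' \in \hatM$ after simplification. For a non-apex element $e$, let $v = \phi(e) \in M'$ be its shadow under the projection from $p$. I would construct a matroid automorphism $T$ of $A(M')$ with $T(v) = e$ as follows: choose a basis $\{v, v_2, \ldots, v_{r(M')}\}$ of $M'$ and augment by $p$ to a basis of the representing space of $A(M')$, then let $T$ be the linear map fixing $p$ and each $v_i$ for $i \geq 2$ but sending $v$ to $e$. Since every element of $A(M')$ has the form $\alpha p + u$ with $u$ representing an element of $M'$ (or $u = 0$ corresponding to $p$), a case-by-case check using the basis expansion of $u$ shows $T$ maps each such element back into $A(M')$, and since $T$ is invertible it is a matroid automorphism. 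Hence $A(M')/e \cong A(M')/v$. A standard pivoting computation in the $GF(q)$-representation then shows $A(M')/v$ equals $A(M'/v)$ after simplification, and since $M'/v$ is an induced minor of $M' \in \hatM$ it lies in $\hatM$ by the inductive hypothesis, so $A(M'/v) \in \hatM$.

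The step requiring the most care will be verifying the automorphism $T$: for each element $\alpha p + u \in A(M')$ one must confirm, using the expansion of $u$ in the chosen basis, that $T(\alpha p + u)$ is again of the form $\beta p + u'$ for some $\beta \in GF(q)$ and some $u'$ representing an element of $M'$ (or $u' = 0$), that is, belongs to $A(M')$. All other steps reduce to standard rank-function bookkeeping and matrix pivots, making this structural verification the crux of the argument.
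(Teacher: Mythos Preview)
Your proof is correct and follows the same overall strategy as the paper: reduce to showing closure under restriction to a flat and under single-element contraction, and in each case split on whether the apex $p$ is involved. The difference lies in how the non-apex cases are handled. The paper dispatches them in one line each by citing Whittle's Lemma~2.3 from \cite{whittle}: part~(ii) says every flat of $A(N)$ avoiding $p$ lies inside some hyperplane of $A(N)$ isomorphic to $N$, and parts~(i) and~(iii) together give $A(N)/e \cong A(N'/e)$ for any $e \neq p$, where $N' \cong N$ is such a hyperplane through $e$. You instead re-derive these facts from scratch, via the projection-from-$p$ map in the flat case and via an explicit transvection $T$ in the contraction case. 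Your transvection computation (which indeed works: $T(\gamma p + u) = (\gamma + c_1\alpha)p + u$ with the \emph{same} $u$, so the image stays on the line $pu$) is exactly the content of Whittle's (i) and (iii): the hyperplane $T^{-1}(E(M'))$ is a copy of $M'$ containing $e$, and contracting $e$ there yields the cone over $M'/v$. So your argument is a self-contained unpacking of the same lemma the paper cites; it buys independence from the reference at the cost of some linear-algebra bookkeeping, while the paper's route is shorter but leans on the reader having \cite{whittle} at hand.
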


In Section~\ref{noK4}, we show that the class of binary matroids that do not contain $M(K_4)$ as an induced minor is closed under taking generalized parallel connections across projective geometries, coning with a tip, and, for a member of the class having no triangles, coning without a tip. Our goal is to eventually prove the following conjecture. A matroid is \textit{triangle-free} if it has no three-element circuits. 

\begin{conjecture}
    The class of $3$-connected binary matroids that do not contain $M(K_4)$ as an induced minor is exactly the class of matroids that can be obtained by starting with binary projective geometries and circuits and applying sequences of the following operations:
    \begin{enumerate}[label=(\roman*)]
        \item generalized parallel connections across projective geometries;
        \item tipped coning; and
        \item tipless coning of triangle-free matroids.
    \end{enumerate}
\end{conjecture}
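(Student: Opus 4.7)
The plan is to prove both inclusions. The ``if'' direction follows from Section~\ref{noK4} once we verify that the two base families lie in $\exim(M(K_4))$. Every induced minor of $PG(n,2)$ corresponds to an interval in its (modular) lattice of flats and so is itself a smaller binary projective geometry, and hence never the non--projective geometry $M(K_4)$. Similarly, every induced minor of the circuit $U_{n-1,n}$ is either a free matroid $U_{k,k}$ or a smaller uniform circuit $U_{k-1,k}$, again never $M(K_4)$. The closure of $\exim(M(K_4))$ under operations (i)--(iii), proved in Section~\ref{noK4}, then completes this direction.

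For the ``only if'' direction, I would induct on $|E(M)|$, handling small-rank cases directly. For the inductive step, let $M$ be a $3$-connected binary matroid with no $M(K_4)$ as an induced minor, and split on whether $M$ contains $M(C_4)$ as an induced minor. If it does not, then $M \in \exim(M(C_4),M(K_4))$ and Theorem~\ref{gf(q)-chordal} expresses $M$ as a sequence of generalized parallel connections of projective geometries, which is operation (i). Otherwise, the aim is to realize $M$ as a tipped or tipless coning of a smaller matroid $N$ in the class, so that the inductive hypothesis applies to $N$.

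The main obstacle is producing this coning decomposition in the case that $M$ has $M(C_4)$ as an induced minor. A candidate tip is an element $p \in E(M)$ such that the simplification $N$ of $M/p$ satisfies $M=A(N)$: for every $e \in E(N)$, both $e$ and $p+e$ lie in $E(M)$ with $\{p,e,p+e\}$ a triangle of $M$. One natural approach is to fix a witness interval $[F,G]$ in the lattice of flats whose quotient simplifies to $M(C_4)$, and exploit the absence of $M(K_4)$ in every extension of this interval to force a local cone structure at a chosen element, which should then propagate globally using $3$-connectivity. The tipless case (iii) is more delicate, since no actual tip exists in $M$: here one must recognize $M$ as $A(N)\backslash p$ for a triangle-free $N$, which amounts to identifying a pairing of $E(M)$ into ``cone complements'' such that no triangle of $M$ spans two distinct pairs (the latter condition corresponding to triangle-freeness of $N$). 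Making these heuristics rigorous --- in particular, ensuring that the selected $p$ (actual or virtual) genuinely produces an $A$-construction, and that no $3$-connected binary matroid in $\exim(M(K_4))$ evades all three operations --- is where we expect the bulk of the work to lie.
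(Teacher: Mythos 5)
First, a point of context: the statement you are trying to prove is stated in the paper as a \emph{conjecture}, not a theorem. The paper does not prove it; it only establishes partial results in its direction, namely the closure of $\exim(M(K_4))$ under operations (i)--(iii) (Corollary~\ref{cor1}) and the structural Theorem~\ref{trianglethroughe}. Your ``if'' direction is correct and matches what the paper actually proves: binary projective geometries and circuits lie in $\exim(M(K_4))$ for exactly the reasons you give (their induced minors are again projective geometries, respectively circuits or free matroids), and Lemmas~\ref{K4GPC}, \ref{tipInClass}, and~\ref{tiplessInClass} give closure under the three operations.

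The ``only if'' direction, however, contains a genuine and unfilled gap, which you yourself flag: in the case where $M$ is $3$-connected, has no $M(K_4)$ induced minor, but does have $M(C_4)$ as an induced minor, you have not shown that $M$ decomposes as a tipped coning, a tipless coning of a triangle-free matroid, or a generalized parallel connection across a projective geometry. ``Exploit the absence of $M(K_4)$ \dots\ to force a local cone structure \dots\ which should then propagate globally'' is a plan, not an argument, and this is precisely the open content of the conjecture. The closest the paper comes to making this rigorous is Theorem~\ref{trianglethroughe}: if $M$ is $3$-connected and \emph{neither $M$ nor $M\backslash e$} has $M(K_4)$ as an induced minor, then every element of $E(M)-\{e\}$ is in a triangle with $e$, which is the statement that $e$ behaves as a cone tip. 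Note the extra hypothesis on $M\backslash e$ --- even identifying a candidate tip requires more than membership of $M$ in $\exim(M(K_4))$, and the paper's proof of this single step is already a lengthy case analysis inside $P_r$ using Lemma~\ref{fillin3sep} and Corollary~\ref{tipcotipK4}. The tipless case, where the tip is a ``virtual'' point of $P_r - E(M)$, and the verification that every $3$-connected member of the class is reached by one of the three operations, remain open. Your proposal cannot be accepted as a proof of the conjecture.
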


In Section~\ref{noK4}, we also  prove various structural results for matroids that do not contain $M(K_4)$ as an induced minor. The most important of these, which may be of independent interest, is the following. 

\begin{theorem}\label{trianglethroughe}
    Let $M$ be a $3$-connected matroid having an element $e$ such that neither $M$ nor $M\backslash e$ has $M(K_4)$ as an induced minor. Then every element of $E(M)-\{e\}$ is in a triangle with $e$. 
\end{theorem}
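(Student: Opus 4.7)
The plan is to argue by contradiction: suppose $f \in E(M)-\{e\}$ is not in any triangle with $e$. Since $M$ is simple and binary, $\cl_M(\{e,f\}) = \{e,f\}$, so $\{e,f\}$ is a rank-$2$ flat of $M$. The goal is to exhibit $M(K_4)$ as an induced minor of either $M$ or $M\backslash e$, contradicting the hypothesis.

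The first step is to take a rank-$3$ flat $F$ of $M$ containing $\{e,f\}$. Since $M|F$ is a simple binary rank-$3$ matroid omitting the third point on the line through $e$ and $f$, the restriction $M|F$ is one of $U_{3,3}$, $U_{3,4}$, $M(K_3)\oplus U_{1,1}$, $M(K_4\backslash e)$, or $M(K_4)$. The possibility $M|F\cong M(K_4)$ is immediately ruled out: $F$ is a flat, so $M|F$ is an induced restriction---hence an induced minor---of $M$. When $r(M)=3$ and $|E(M)|\geq 4$, the only $3$-connected simple binary rank-$3$ matroids are $M(K_4)$ and $F_7$; the former is excluded by hypothesis, and every pair of elements in $F_7$ is in a triangle, contradicting our choice of $f$.

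For $r(M)\geq 4$, the plan is to extend $F$ to a rank-$4$ flat $F'$ of $M$ and exhibit a contraction inside $M|F'$ or $(M\backslash e)|F'$ whose simplification is $M(K_4)$. By $3$-connectivity of $M$ such an extension $F'$ exists, and any element $h\in F'\setminus F$ must participate in triangles of $F'$, but none of these may contain both $e$ and $f$ by assumption. The main obstacle is the case-by-case analysis for the four remaining options of $M|F$: for each one we must argue that either the extension $F'$ forces a triangle through $\{e,f\}$ (contradicting our assumption) or that a judicious contraction yields $M(K_4)$ as an induced minor. The critical case is $M|F\cong M(K_4\backslash e)$, where the element $y$ shared by the two triangles of $M|F$ plays a pivotal role: contracting $y$ inside a suitable flat of $M\backslash e$ should simplify to $M(K_4)$, contradicting the second hypothesis. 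Throughout, the delicate point is to track that the sets being contracted and restricted are in fact flats, so that one indeed obtains an induced minor rather than merely a minor.
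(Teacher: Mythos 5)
Your proposal correctly reduces to the situation where $\{e,f\}$ is a rank-$2$ flat and disposes of the rank-$3$ case, but from that point on it is only a plan, and the plan rests on a strategy that cannot work: you propose to find the forbidden $M(K_4)$ inside a rank-$4$ flat $F'$ of $M$ (or of $M\backslash e$) containing $\{e,f\}$. There is no reason such a flat must reveal anything. For instance, every rank-$4$ flat through $\{e,f\}$ could restrict to an independent set, a circuit $U_{4,5}$, or an $AG(3,2)$-like configuration, none of which yields $M(K_4)$ by contraction; in a hypothetical triangle-free counterexample, \emph{every} low-rank flat looks like this, and your case analysis has no move left. The $M(K_4)$ induced minor that actually certifies the contradiction corresponds to an interval $[F_1,F_2]$ of the lattice of flats with $r(F_2)-r(F_1)=3$ but with $r(F_2)$ potentially large, so restricting attention to rank-$4$ flats forfeits exactly the minors you need.

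The paper's proof is global and inductive on $r(M)$. It first shows $\si(M/f)$ is $3$-connected (this requires Lemma~\ref{fillin3sep} on filling in vertical $3$-separations, an ingredient absent from your outline), then applies the induction hypothesis to $\si(M/f)$ to conclude that, with $t$ the missing projective point on the line through $e$ and $f$, every element of $M$ lies in a triangle of $M+t$ with $e$ or with $t$. After showing that triangles through $e$ and triangles through $t$ cannot meet (\ref{noeandtmeet}) and that at most one triangle besides $\{e,f,t\}$ contains $t$, the contradiction is delivered by Corollary~\ref{tipcotipK4}: a basis of $M/e$ all of whose elements lie in $2$-circuits, together with the exceptional element $f$, produces a binary spike with tip and cotip as an induced minor, hence $M(K_4)$. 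That spike lives in a flat of rank equal to the length of a circuit, which is unbounded --- this is precisely the mechanism your local approach cannot replicate. To repair your argument you would need, at minimum, the induction on rank, the $3$-connectivity of $\si(M/f)$, and the spike construction; the rank-$4$ case analysis you describe as ``the main obstacle'' is not where the difficulty lies.
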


\section{Small sets of excluded induced minors}\label{eximSec}
There are exactly four connected rank-3 binary matroids, namely $M(C_4),M(K_4\backslash e),M(K_4),$ and $F_7$. In this section, we characterize the classes of matroids with pairs of such matroids as excluded induced minors. Unless otherwise stated, the matroids considered in this section will be binary. Recall that, given a set $\mathcal{M}$, of binary matroids, we write $\exim(\mathcal{M})$ for the class of binary matroids with no member of $\mathcal{M}$ as an induced minor. Following Cordovil, Klein, and Forge \cite{cordovil}, a matroid $M$ is \textit{chordal} if, for each circuit $C$ with at least four elements, $\cl_M(C)-C\neq \emptyset $. Similarly, an element $g$ of $E(M)$ is a \textit{chord} of a circuit $C$ if $g\in\cl_M(C)-C$. In \cite{douthox2}, we proved the following characterization of chordal binary matroids.

\begin{lemma}\label{chordal}
    A binary matroid is chordal if and only if it has no $M(C_4)$ as an induced minor.
\end{lemma}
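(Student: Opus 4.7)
The plan is to prove each direction separately; the harder one is that chordality forbids $M(C_4)$ as an induced minor. For the easier direction I argue contrapositively: if $M$ has a circuit $C$ with $|C|\ge 4$ and $\cl_M(C)=C$, then $C$ is a flat of $M$ and $M|C\cong U_{|C|-1,|C|}$. Any set $\{e_5,\dots,e_{|C|}\}$ of $|C|-4$ elements of $C$ is independent and hence closed in $M|C$, and the interval $[\cl_M(\{e_5,\dots,e_{|C|}\}),\,C]$ in the lattice of flats of $M$ has interval matroid $(M|C)/\{e_5,\dots,e_{|C|}\}\cong U_{3,4}=M(C_4)$, exhibiting $M(C_4)$ as an induced minor.

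For the converse, assume $M$ is chordal and, for contradiction, that $M(C_4)$ is the interval matroid of some $[F_1,F_2]$ with atoms $A_1,A_2,A_3,A_4$. Among all quadruples $(x_1,x_2,x_3,x_4)$ with $x_i\in A_i-F_1$ together with a circuit $D$ of $M$ satisfying $D-F_1=\{x_1,\dots,x_4\}$, fix one minimizing $|D|$. If $|D|=4$, then $D=\{x_1,\dots,x_4\}$ is a $4$-circuit and chordality produces a chord $g\in\cl_M(D)-D$. The rank-$3$ flat $\cl_M(D)$ is a simple binary matroid embedding into $F_7$; since $D$ is an arc, the fifth point $g$ must lie on a line with two elements of $D$, giving a triangle $\{g,x_i,x_j\}$ of $M$. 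As $\cl_M(D)\subseteq F_2$, either $g\in F_1$ (whence the triangle yields $x_j\in\cl_M(F_1\cup\{x_i\})=A_i$, so $A_i=A_j$), or $g\in A_k$: if $k\notin\{i,j\}$ the triangle projects to a dependent triple in $M(C_4)$, and if $k\in\{i,j\}$ the triangle again collapses two atoms. Every subcase is a contradiction.

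If $|D|\ge 5$, chordality supplies a chord $g$ of $D$, and binary circuit elimination decomposes $D=D_1\triangle D_2$ with $D_1\cap D_2=\{g\}$ and $|D_i|<|D|$. A short submodularity argument shows that for each circuit $D_i$ of $M$ with $D_i\not\subseteq F_1$, the image $D_i-F_1$ is dependent in $M/F_1$; since every proper nonempty subset of the $4$-circuit $\{x_1,\dots,x_4\}$ of $M/F_1$ is independent, the casework over how $\{x_1,\dots,x_4\}$ partitions between $D_1-F_1$ and $D_2-F_1$ (with $g$ in both parts when $g\notin F_1$) resolves either by some $D_i-F_1$ containing all four representatives --- possibly with $g\in A_p$ replacing $x_p$ --- and thus yielding a smaller witness circuit contradicting the minimality of $|D|$, or by some $D_i-F_1$ being a proper nonempty subset of $\{x_1,\dots,x_4\}$ that is independent in $M/F_1$ yet required to be dependent.

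I expect the main obstacle to be the $|D|\ge 5$ casework, particularly the $(2,2)$ subcase with $g\notin F_1$: dependence of $D_1-F_1=\{g,x_p,x_q\}$ in $M/F_1$ cannot come from a $3$-element circuit (since any three atoms of $M(C_4)$ are independent), so it must come from a $2$-element parallel circuit $\{g,x_p\}$, pinning $g$ into $A_p$; then $D_2-F_1=\{g,x_r,x_s\}$ consists of representatives of three distinct atoms $A_p,A_r,A_s$ and is therefore independent in $M/F_1$, which is the needed contradiction. The enabling structural facts are that any three elements of $M(C_4)$ are independent and that in $F_7$ no set of five points is an arc, so a fifth point added to a $4$-circuit always completes a triangle with two elements of that circuit.
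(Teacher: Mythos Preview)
The paper does not prove Lemma~\ref{chordal}; it is quoted from the authors' earlier work \cite{douthox2}, so there is no in-paper argument to compare against. That said, your proof is correct.

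The forward direction is clean: a chordless circuit $C$ is a flat with $M|C\cong U_{|C|-1,|C|}$, and since any $k<|C|-1$ elements of $C$ are closed in $M$, contracting $|C|-4$ of them yields $U_{3,4}=M(C_4)$ as an induced minor.

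For the converse, your minimal-witness argument is sound. In the $|D|=4$ case, the fact that a rank-$3$ binary flat embeds in $F_7$ and that five points in $F_7$ cannot form an arc does force a triangle $\{g,x_i,x_j\}$; your three-way split on the location of $g$ (in $F_1$, in $A_k$ with $k\notin\{i,j\}$, in $A_k$ with $k\in\{i,j\}$) is handled correctly---in the last sub-case, $x_j\in\cl_M(\{g,x_i\})\subseteq A_i$ does collapse the atoms.

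For $|D|\ge 5$, the binary decomposition $D=D_1\bigtriangleup D_2$ with $D_1\cap D_2=\{g\}$ and $|D_i|<|D|$ is valid (both pieces are genuine circuits since any circuit contained in $D_2\setminus\{g\}\subseteq D$ would contradict $D$ being a circuit). Your submodularity claim that $D_i-F_1$ is dependent in $M/F_1$ whenever $D_i\not\subseteq F_1$ is correct, and combined with the fact that every proper nonempty subset of the $4$-circuit $\{x_1,\dots,x_4\}$ is independent in $M/F_1$, the case analysis goes through. In a polished write-up you should spell out all splits explicitly: when $g\in F_1$, the $(4,0)$ split gives $D_1$ as a strictly smaller witness and the $(3,1)$, $(2,2)$ splits yield an independent $D_i-F_1$; when $g\notin F_1$, the $(4,0)$ split makes $D_2-F_1=\{g\}$ independent, the $(3,1)$ split forces $g$ parallel to the lone $x_d$ so that $D_1$ (with $g$ replacing $x_d$) is a smaller witness, and the $(2,2)$ split is exactly as you describe. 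None of these hide any difficulty.
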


We use this lemma in the proof of the next result.

\begin{lemma}\label{eximC4F7}
    The class $\exim(M(C_4),F_7)$ is exactly the class of chordal regular matroids.
\end{lemma}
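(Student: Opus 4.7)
The plan is to prove both inclusions of the claimed equality. For $(\supseteq)$, suppose $M$ is chordal and regular. Lemma \ref{chordal} gives that $M$ has no $M(C_4)$ induced minor, and regularity forces $M$ to have no $F_7$ minor, and in particular no $F_7$ induced minor. For $(\subseteq)$, let $M \in \exim(M(C_4),F_7)$. Lemma \ref{chordal} shows that $M$ is chordal, so the remaining task is to prove that $M$ is regular, i.e., that $M$ has neither $F_7$ nor $F_7^*$ as a minor.

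To rule out $F_7$ as a minor, I would argue as follows. Suppose, for contradiction, that $F_7$ is a minor of $M$. Then $F_7=\si(M/X)\backslash Y$ for some $X,Y$; set $M'=\si(M/X)$ and $S=E(F_7)\subseteq E(M')$. The matroid $M'$ is a chordal induced minor of $M$ with $F_7=M'|S$, $|S|=7$, and $r_{M'}(S)=3$. Because every rank-$3$ flat of a simple binary matroid has at most $2^{3}-1=7$ elements, we get $\cl_{M'}(S)=S$, so $S$ is a flat of $M'$. Hence $F_7=M'|S$ is an induced restriction of $M'$ and therefore an induced minor of $M$, contradicting $M\in\exim(M(C_4),F_7)$.

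The main obstacle is ruling out $F_7^*$ as a minor; there is no analogous cardinality bound to force the relevant set to be a flat, so a different argument is needed. Here I would invoke the structural characterization of $\exim(M(C_4))$ established in our prior work \cite{douthox1,douthox2}, which expresses every chordal binary matroid as a sequence of generalized parallel connections built from a restricted family of pieces that includes the binary projective geometries $PG(r,2)$. Since any $PG(r,2)$ with $r\geq 2$ contains $F_7=PG(2,2)$ as a flat, and hence as an induced minor, the hypothesis that $M$ has no $F_7$ induced minor forces every projective-geometry piece in the decomposition of $M$ to have rank at most two, so each such piece is either a single point or a triangle. What remains are graphic building blocks, and since generalized parallel connections across a single point or a triangle preserve graphicness, $M$ must itself be graphic. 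Graphic matroids are regular, so $M$ has no $F_7^*$ minor, which completes the proof.
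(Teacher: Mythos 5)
Your forward inclusion and your argument ruling out $F_7$ as a minor are both sound; the latter is essentially the justification the paper leaves implicit (a rank-$3$ flat of a simple binary matroid has at most seven points, so a seven-point rank-$3$ set in $\si(M/X)$ is automatically a flat). The last paragraph, however, has a genuine gap. First, the structure theorem for $\exim(M(C_4))$ in \cite{douthox1,douthox2} does not decompose an arbitrary chordal binary matroid into generalized parallel connections of projective geometries and graphic pieces: the irreducible pieces of that decomposition are round chordal binary matroids, an uncharacterized family that is much larger than ``projective geometries plus graphic matroids,'' so the step ``what remains are graphic building blocks'' is unsupported. Second, your conclusion is provably too strong. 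You end by deducing that every member of $\exim(M(C_4),F_7)$ is graphic, yet $M^*(K_{3,3})$ is cographic, hence regular and $F_7$-minor-free, and it is chordal (its circuits are the bonds of $K_{3,3}$, which have sizes $3$, $4$, and $5$; each $4$-bond $\delta(\{u,v\})$ has the edge $uv$ as a chord, and each $5$-bond is a spanning circuit of this rank-$4$ matroid, so its closure is everything). By your own first paragraph, $M^*(K_{3,3})$ lies in $\exim(M(C_4),F_7)$, and it is not graphic. So the argument cannot be repaired along these lines.

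The paper rules out $F_7^*$ much more directly. A four-point hyperplane of $F_7^*$ is a flat isomorphic to $M(C_4)$, so $F_7^*$ itself has $M(C_4)$ as an induced restriction; hence chordality of $M$ already forbids $F_7^*$ as an \emph{induced} minor. If $F_7^*$ were nonetheless a minor of $M$, the Scum Theorem would give a rank-$4$ contraction of $M$ having $F_7^*$ as a spanning restriction, and this contraction must be a proper extension of $F_7^*$ since $F_7^*$ is not an induced minor. Every simple rank-$4$ binary proper extension of $F_7^*$ has $F_7$ as an induced minor (it contains $AG(3,2)$ or $S_8$, and in either case one obtains a seven-point rank-$3$ flat after at most one contraction), contradicting $M\in\exim(M(C_4),F_7)$. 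Combined with your $F_7$ argument and Tutte's excluded-minor characterization of regular matroids, this completes the reverse inclusion.
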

\begin{proof}
    By Lemma~\ref{chordal}, $\exim(M(C_4),F_7)$ contains all chordal regular matroids. Let $M$ be in $\exim(M(C_4),F_7)$. Then $M$ has does not have $M(C_4)$ as an induced minor and, by Lemma~\ref{chordal}, $M$ is chordal. Suppose $M$ is not regular. Since $M$ is binary, it does not have $U_{2,4}$ as a minor.  As $M$ does not have $F_7$ as an induced minor, it does not have $F_7$ as a minor. Therefore, since $M$ is not regular, it must have $F_7^*$ as a minor. As $F_7^*$ has $M(C_4)$ as an induced minor, $M$ does not have $F_7^*$ as an induced minor. By the Scum Theorem, $M$ has as a contraction a rank-4 proper extension of $F_7^*$. But every rank-4 proper extension of $F_7^*$ has $F_7$ as an induced minor, a contradiction.
\end{proof}

A graph $H$ is a \textit{parallel extension} of a graph $G$ if $G=H\backslash f$ for an edge $f$ such that $f$ is in a non-trivial parallel class of $H$. Likewise, a graph $H$ is a \textit{series extension} of a graph $G$ if $G=H/f$ for an edge $f$ of $H$ such that $f$ is in a non-trivial series class of $H$. A graph $G$ is a \textit{series-parallel network} if it can be obtained from a loop or $K_2$ by a sequence of operations each of which is a series or parallel extension. A matroid $M$ is called a series-parallel network if $M$ is the cycle matroid of a graph $G$ that is a series-parallel network.
Observe that this means that every series-parallel network is a connected matroid.

\begin{lemma}\label{eximK4F7}
    The connected members of $\exim(M(K_4),F_7)$ consist  precisely of all simple series-parallel networks.
\end{lemma}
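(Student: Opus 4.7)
My plan is to use the classical Dirac--Duffin characterization of series-parallel networks: a connected matroid is a series-parallel network if and only if it has no minor isomorphic to $U_{2,4}$ or $M(K_4)$. The forward inclusion is then immediate, since every series-parallel network is graphic (hence has no $F_7$-minor) and has no $M(K_4)$-minor by definition, and every induced minor is in particular a minor; so every simple series-parallel network lies in $\exim(M(K_4), F_7)$.

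For the reverse inclusion, let $M$ be a connected (hence simple, by the standing convention) member of $\exim(M(K_4), F_7)$. Being binary, $M$ has no $U_{2,4}$-minor, so by the Dirac--Duffin theorem it suffices to show $M$ has no $M(K_4)$-minor. I would argue by contradiction: if $M(K_4)$ is a minor of $M$, the Scum Theorem produces a set $X \subseteq E(M)$ with $r(M/X) = 3$ such that $M(K_4)$ is a restriction of $M/X$; consequently $\si(M/X)$ is a simple binary matroid of rank $3$ containing $M(K_4)$ as a restriction.

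Up to isomorphism, the simple binary matroids of rank $3$ are exactly the restrictions of $F_7 = PG(2,2)$, and the only ones with at least six elements are $M(K_4)$ (using $F_7 \backslash e \cong M(K_4)$) and $F_7$ itself. Hence $\si(M/X)$ is isomorphic to $M(K_4)$ or to $F_7$. Because contracting followed by simplifying is one of the operations permitted in forming induced minors, $\si(M/X)$ is itself an induced minor of $M$, furnishing an $M(K_4)$- or $F_7$-induced minor of $M$ and contradicting the hypothesis. I anticipate no serious obstacle: the one external ingredient is the standard Dirac--Duffin theorem (see, e.g., Oxley \cite{ox1}), and the case analysis on rank-$3$ simple binary matroids is immediate from the fact that they are sub-geometries of $F_7$.
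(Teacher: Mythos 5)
Your proof is correct and follows essentially the same route as the paper: Dirac's theorem for the forward inclusion, and the characterization of connected matroids with no $U_{2,4}$- or $M(K_4)$-minor as series-parallel networks (\cite[Corollary 12.2.14]{ox1}) for the converse. Your Scum Theorem argument simply makes explicit the step the paper leaves implicit, namely that a binary matroid with an $M(K_4)$-minor has $M(K_4)$ or $F_7$ as an induced minor.
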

\begin{proof}
    Suppose $M$ is a series-parallel network. Then, by a theorem of Dirac \cite{dirac2}, $M$ does not have $M(K_4)$ as a minor and therefore $M$ has neither $M(K_4)$ nor $F_7$ as an induced minor.

    Now suppose $M$ is in $\exim(M(K_4),F_7)$. Since $M$ is binary, $M$ has no $U_{2,4}$-minor. Moreover, $M$ has no $M(K_4)$-minor. Thus, by,   for example,  \cite[Corollary 12.2.14]{ox1},  $M$ is isomorphic to $M(G)$ for some series-parallel network $G$.
\end{proof}

Theorem~\ref{gf(q)-chordal} and Lemmas~\ref{eximC4F7}~and~\ref{eximK4F7} determine $\exim(N_1,N_2)$ for three pairs of matroids from $\{M(C_4),M(K_4),F_7\}$. Each of the remaining pairs $\{N_1,N_2\}$ contains $M(K_4\backslash e)$. The next result will be useful in dealing with these possibilities.

\begin{lemma}\label{always3C}
    Let $M$ be a connected binary matroid that is not $3$-connected and suppose that $|E(M)|\geq 5$. Then 
    \begin{enumerate}[label=(\roman*)]
        \item $M$ is a circuit and has $M(C_4)$ as an induced minor; or
        \item $M$ decomposes as a parallel connection of simple matroids and has $M(K_4\backslash e)$ as an induced minor; or
        \item $M$ decomposes as a $2$-sum of simple matroids and has both $M(C_4)$ and $M(K_e\backslash e)$ as   induced minors.
    \end{enumerate}
\end{lemma}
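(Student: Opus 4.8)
\textit{Proof idea.} I would use the structure theory of $2$-sums to reduce to three explicit configurations and then produce the required induced minors in each, relying throughout on two facts: a circuit through a distinguished element can be shrunk to a triangle through that element by contractions (the three surviving elements stay pairwise non-parallel, since any two of them together with the contracted part of the circuit form an independent set), and over $GF(2)$ a rank-$2$ flat has at most three points, so such a triangle is automatically a flat and no spurious elements appear when one later restricts to a flat.

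\textbf{Reduction to the three cases.} Since $M$ is connected, not $3$-connected and has at least four elements, it has a $2$-separation, so $M=M_1\oplus_2 M_2$ for connected matroids $M_1,M_2$ sharing a basepoint $p$, with $3\le|E(M_i)|<|E(M)|$ (\cite[\S 8]{ox1}). As $M$ is simple, $M_1\ba p$ and $M_2\ba p$ have no parallel pairs, so $M_i$ can fail to be simple only by having $p$ in a parallel pair. If $p$ is parallel to $q$ in $M_1$, then $q$ lies in a circuit of $M_1$ avoiding $p$ (here one uses that $M$ is simple to rule out $M_1\cong U_{1,3}$) and also in the circuit $\{q\}\cup(C_2-p)$ of $M$ for every circuit $C_2$ of $M_2$ through $p$; with $A=E(M_1)-p$ and $B=E(M_2)-p$ this puts $q$ in $\cl_M(A-q)\cap\cl_M(B)$, and then $(A-q,\,B\cup q)$ is a $2$-separation of $M$ having $q$ in its guts, so $M$ is the parallel connection across $q$ of the two restrictions $M|A$ and $M|(B\cup q)$ — both simple, each with at least three elements. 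This is case (ii), and in it $M$ is not a circuit. If neither basepoint lies in a parallel pair, both $M_i$ are simple: if $M$ is a circuit we are in case (i), and otherwise — since a $2$-sum of two circuits is again a circuit — at least one $M_i$ is not a circuit and we are in case (iii). (A circuit always lands in case (i): in the only $2$-sum decompositions of a circuit the basepoint sits in the spanning circuit of each factor, hence in no parallel pair.)

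\textbf{Producing the induced minors.} In case (i), $M\cong U_{n-1,n}$ with $n\ge 5$, and contracting any $n-4$ elements gives $U_{3,4}=M(C_4)$ (no simplification is needed, as every $3$-subset of $U_{m-1,m}$ has rank $3$ for $m\ge 4$). In case (ii), write $M=P_q(N_1,N_2)$; each $N_i$ has a circuit $C_i$ through $q$ of size at least three, and contracting $C_i-\{q,a_i,b_i\}$ turns $\{q,a_i,b_i\}$ into a triangle. Doing this for both $i$ inside $M$ and then restricting to the flat $\cl(\{q,a_1,b_1,a_2,b_2\})$ (which, by the binary remark above, contains no further points) yields $P_q(U_{2,3},U_{2,3})\cong M(K_4\ba e)$ as an induced minor. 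The same triangle-reductions applied to both factors of $M=M_1\oplus_2 M_2$ in case (iii) produce $M(C_3)\oplus_2 M(C_3)\cong M(C_4)$ as an induced minor; alternatively one exhibits a chordless $4$-element circuit of $M$ and applies Lemma~\ref{chordal}.

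\textbf{The remaining claim, and the main obstacle.} It remains to obtain $M(K_4\ba e)$ as an induced minor in case (iii). Take $M_1$ to be the factor that is not a circuit, so $M_1$ is a simple connected binary matroid with $|E(M_1)|\ge 4$ (necessarily with at least five elements, since the only simple connected binary matroid on four elements is the circuit $U_{3,4}$); after first reducing $M_2$ to a triangle through $p$, it suffices to treat $M=M_1\oplus_2 M(C_3)$, the ``subdivision'' of $M_1$ at $p$. Here $A=E(M_1)-p$ is a flat with $M|A=M_1\ba p$, and contracting one element of the attached triangle and simplifying returns $M_1$ itself, so $M_1\ba p$, $M_1$, and their small contractions all occur as induced minors of $M$ — and one such contraction creates a parallel pair at $p$, which, in the presence of the attached triangle, exposes a copy of $M(K_4\ba e)$. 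A short case analysis on $M_1$ then finishes: if $M_1$ is not $3$-connected it is a non-circuit instance of the lemma with fewer elements, so induction on $|E|$ applies to $M_1$ directly; if $M_1$ is $3$-connected then $M_1$ has $M(K_4)$, hence $M(K_4\ba e)$, as a minor, and one shows this minor is realised as an induced minor of the subdivision (the cases $M_1\in\{M(K_4),F_7\}$ already illustrate the two mechanisms, via $M_1\ba p\cong M(K_4\ba e)$ and via a simplified contraction of an $M_1$-side element respectively). I expect this last step to be the main obstacle: unlike $M(C_4)$, the matroid $M(K_4\ba e)$ is \emph{not} a $2$-sum of simple matroids, so it cannot be assembled piecewise from the two factors and must instead be found inside a deletion or a small contraction of $M_1$ — which is possible exactly because $M_1$ is not a circuit. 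Organising this bookkeeping, and checking throughout that the reductions yield genuine \emph{induced} minors rather than merely minors, is cleanest in the language recalled in the introduction: an induced minor of $M$ is precisely the matroid on an interval in the lattice of flats of $M$.
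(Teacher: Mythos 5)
Your reduction to the three cases, your treatment of case (i), your treatment of the parallel-connection case (ii), and your derivation of $M(C_4)$ in case (iii) are all essentially sound and run parallel to the paper's argument (the paper organizes the split by whether $\cl(X)\cap\cl(Y)$ is empty or a single point $z$ rather than by whether a basepoint of the $2$-sum lies in a parallel pair, but these are the same dichotomy). One small caution in case (ii): the fact that $\cl(\{q,a_1,b_1,a_2,b_2\})$ contains no further points is not a consequence of the three-point bound on binary lines alone, since a priori the rank-$3$ flat could contain a point off both triangles (e.g.\ on the line through $a_1$ and $a_2$); what saves you is that the closure of $C_1\cup C_2$ in a parallel connection is $\cl_{M_1}(C_1)\cup\cl_{M_2}(C_2)$, so any extra point lands on one of the two already-full lines and vanishes on simplification. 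This matters because $M(K_4)$ and $F_7$ do \emph{not} have $M(K_4\backslash e)$ as an induced minor, so you cannot afford a larger flat here.

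The genuine gap is exactly where you flag it: producing $M(K_4\backslash e)$ in case (iii) when the non-circuit factor $M_1$ is $3$-connected. Knowing that $M_1$ has an $M(K_4)$-minor does not help directly --- $F_7$ is $3$-connected, has $M(K_4)$ as a minor, and has neither $M(K_4)$ nor $M(K_4\backslash e)$ as an induced minor --- and your proposed mechanism (``some contraction of an $M_1$-side element creates a parallel pair at $p$, which exposes $M(K_4\backslash e)$'') is not an argument: you neither identify which element to contract nor verify that the relevant set is a flat of the resulting matroid. The paper closes this with a construction that needs no connectivity case-split at all: in the non-circuit factor take a minimum-size circuit $C_2$ through the basepoint $p$ and a circuit $D_2$ meeting it with $D_2-C_2$ minimal and non-empty; then $C_2-D_2$, $C_2\cap D_2$, $D_2-C_2$ are the series classes of a theta-restriction whose pairwise unions are circuits, and contracting within them yields an induced minor $N_2$ of $M_2$ through $p$ isomorphic to $M(K_4\backslash e)$, $M(K_4)$, or $F_7$. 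If $N_2\cong M(K_4\backslash e)$ one $2$-sums it with a triangle from $M_1$; otherwise one first contracts one more element of $N_2$ to put $p$ in a parallel class of size two, and the simplification of the $2$-sum with a triangle from $M_1$ is then exactly $M(K_4\backslash e)$. Your inductive framework for the non-$3$-connected factor is a workable alternative for that subcase, but without an argument of the above kind for the $3$-connected base case the proof is incomplete.
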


\begin{proof}
    If $M$ is a circuit, then  $M$ has $M(C_4)$ as an induced minor. As $M$ is connected but not 3-connected, it has a 2-separation $(X,Y)$.
  Suppose that $\cl(X)\cap \cl(Y)=\emptyset$. Then, by \cite[Lemma 2.3]{douthox2}, $M$ has $M(C_4)$ as an induced minor. Moreover, $M$ is the 2-sum of simple matroids $M_1$ and $M_2$ that have ground sets $X\cup \{p\}$ and $Y\cup \{p\}$, respectively.

    Now assume that $M$ is not a circuit. Then, at least one of $M_1$ or $M_2$, say $M_2$, is not a circuit.  Let $C_2$ be a minimum-sized circuit of $M_2$ containing $p$, and let $D_2$ be a circuit of $M_2$ that meets $C_2$ such that $D_2  - C_2$ is minimal and non-empty. 
    The choice of $D_2$ implies that 
    $\{C_2 - D_2, C_2 \cap D_2, D_2 - C_2\}$ is a partition of $C_2 \cup D_2$ such that the union of every two of these sets is a circuit of $M_2$. Moreover, each of the sets in $\{C_2 - D_2, C_2 \cap D_2, D_2 - C_2\}$ is a series class of $M_2|(C_2 \cup D_2)$ and, since $M_2$ is simple, at most one of these sets contains a single element. In $M_2|(C_2 \cup D_2)$, we can contract elements to obtain an $M(K_4 \backslash e)$-minor having $p$ as an element. Thus, $M_2$ has an induced minor $N_2$ using the element $p$ such that $N_2$ is one of $M(K_4 \backslash e)$, $M(K_4)$, or $F_7$. Let $C_1$  be a minimum-sized circuit of $M_1$ containing $p$ and let  $a_1$ and $b_1$ be elements of $C_1 - p$. Then deleting the elements of $M_1$ not in $C_1$ and   contracting  the elements of $C_1 - \{a_1,b_1,p\}$, we get, as an induced minor $N_1$ of $M_1$, a triangle with ground set $\{a_1,b_1,p\}$.
    
 Suppose  $N_2$ is   $M(K_4 \backslash e)$. Then, by taking the 2-sum of $N_1$ and $N_2$, we get an induced minor of $M$ that we can check has $M(K_4 \backslash e)$ as an induced minor. 
 Now suppose $N_2$ is $M(K_4)$ or $F_7$. 
Then we can contract an element of $N_2$ other than $p$ to get a rank-$2$ matroid $N_2'$ having $p$ in a $2$-circuit.   The simplification of the 2-sum of $N_1$ and $N_2'$ with basepoint $p$ is $M(K_4 \backslash e)$, so this matroid is an induced minor of $M$.

We may now assume that $\cl(X)\cap \cl(Y)=\{z\}$ for some element $z$. For each $i$ in $\{1,2\}$, let $C_i$ be a circuit in $M_i$ containing $z$, and let $a_i$ and $b_i$ be distinct elements of $C_i-z$. Then $M/((C_1\cup C_2)-\{a_1,b_1,a_2,b_2,z\})$ has $\{a_1,b_1,z\}$ and $\{a_2,b_2,z\}$ as triangles and has $\{a_1,b_1,a_2,b_2,z\}$ as a flat. Therefore, $M$ has $M(K_4\backslash e)$ as an induced minor. 
\end{proof}

We will often write $P_r$ for the projective geometry $PG(r-1,q)$ when context makes the field clear.

\begin{proof}[Proof of Theorem~\ref{noK4eK4}]
    Let $\mathcal{M}$ be the set of all connected matroids in $\exim(M(K_4\backslash e),M(K_4))$. Clearly, all projective geometries, all affine geometries, and all circuits with at least three elements are in $\mathcal{M}$. 
    
    Let $N$ be in $\mathcal{M}$. If $r(N)=3$, then the $N$ is either $M(C_4)$ or $F_7$ and the result holds. Let $M$ be a smallest-rank member of $\mathcal{M}$ such that $M$ is not a projective geometry, an affine geometry, or a circuit. Let $r(M)=r$. Then $r \ge 4$. By Lemma~\ref{always3C}, we may assume $M$ is 3-connected. By the choice of $M$, the matroid $M/f$ is a projective geometry, an affine geometry, or a circuit for each $f$ in $E(M)$. If $M/f$ is a projective geometry for all $f$ in $E(M)$, then, by \cite[Lemma 3.2]{douthox1}, $M$ is isomorphic to $P_r\backslash P_{i}$ for some $i$ with $0\leq i \leq r-1$. If $1\leq i\leq r-2$, then $M$ has $M(K_4)$ as an induced minor. Therefore, $i=0$ and $M$ is a projective geometry, or $i=r-1$ and $M$ is an affine geometry. In each case, we obtain a contradiction. Therefore, $M$ has an element $f$ such that $M/f$ is $M(C_n)$ for some $n\geq 4$, or $M/f$ is an affine geometry. In each case, there is at most one triangle containing $f$ otherwise $M$ has $M(K_4 \backslash e)$ as an induced minor. Suppose that $M/f\cong M(C_n)$ for some $n\geq 4$. Then $|E(M)|\leq n+2$ and $r(M)=n$. Therefore $r^*(M)\leq 2$, a contradiction since $M$ is 3-connected.

    We now know that $M/f$ is an affine geometry for some $f$ in $E(M)$. In this case, as $f$ is in at most one triangle,  
    \begin{equation}\label{affsize}
        2^{r-2}+1\leq |E(M)|\leq 2^{r-2}+2.
    \end{equation} 
    
    Suppose $M/g$ is a projective geometry for some $g$ in $E(M)$. Then $|E(M/g)|= 2^{r-1}-1$, which contradicts (\ref{affsize}) since $r \ge 4$. Therefore, for all $g$ in $E(M)$, we must have that $M/g$ is an affine geometry. Assume that $M$ is not an affine matroid and let $C$ be a smallest odd circuit in $M$. As $M$ is not a circuit, there is an element $g$ in $E(M) - C$. Since $M/g$ is an affine geometry, $g \in \cl_M(C)$, so $M$ has a circuit $D$ such that $g \in D \subseteq C \cup \{g\}$. Then $D$ or $D \bigtriangleup C$ is an odd circuit that is smaller than $C$, a contradiction. We conclude that $M$ is an affine matroid. Thus, if $g \in E(M)$, then 
$M/g$ has an odd circuit, a contradiction. 
\end{proof}

\begin{lemma}\label{noK4e}
    The $2$-connected simple graphs that do not contain $K_4\backslash e$ as an induced minor are cycles and cliques.
\end{lemma}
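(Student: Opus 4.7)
The plan is to prove the contrapositive: assuming $G$ is a 2-connected simple graph that is neither a cycle nor a clique, I exhibit $K_4\backslash e$ as an induced minor of $G$. The reverse direction is immediate, since every induced minor of a cycle is a disjoint union of paths and cycles and every induced minor of a clique is a clique, none of which is $K_4\backslash e$. A short preliminary then gives a vertex $v$ with two non-adjacent neighbors $a, b$: otherwise every closed neighborhood in $G$ is a clique, the endpoints of any edge share closed neighborhoods, and connectedness of $G$ forces $G$ itself to be a clique. Fix such $v, a, b$ and let $P = aw_1w_2\cdots w_kb$ (with $k\geq 1$) be a shortest $ab$-path in $G-v$; then $P$ is induced and the cycle $C = vaw_1\cdots w_kbv$ has length at least four.

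If $G$ is chordal, then since $P$ is induced and $ab\notin E(G)$, any chord of $C$ has the form $vw_i$. Choosing $i$ smallest and applying chordality to the subcycle $vaw_1\cdots w_iv$, whose only allowable chord type is $vw_j$ with $j<i$ -- contradicting minimality -- forces $i=1$, so $vw_1\in E(G)$. An analogous chord-chase on the subcycle $vw_1w_2\cdots w_kbv$ yields $vw_2\in E(G)$ when $k\geq 2$. Then $\{v,a,w_1,b\}$ (when $k=1$, missing edge $ab$) or $\{v,a,w_1,w_2\}$ (when $k\geq 2$, missing edge $aw_2$, absent because $P$ is induced) induces $K_4\backslash e$ as a subgraph.

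Otherwise $G$ has an induced cycle $C^*$ of length at least four. Since $G\neq C^*$, 2-connectivity provides a minimum-length path $Q = v_iu_1u_2\cdots u_mv_j$ from $V(C^*)$ to itself with interior in $V(G)\setminus V(C^*)$; minimality forces $m\geq 1$, $Q$ induced, and every $u_l$ with $l<m$ to have no neighbor on $C^*$. Letting $T = N_G(u_m)\cap(V(C^*)\setminus\{v_i\})$ and refining $v_j$ to be the first element of $T$ encountered upon traversing $C^*$ from $v_i$ in a chosen direction, the arc $A$ of $C^*$ from $v_i$ to $v_j$ on that side has interior disjoint from $T$. The four connected branch sets $V_c=\{v_i\}$, $V_d=\{v_j\}$, $V_a=V(A)\setminus\{v_i,v_j\}$, and $V_b=\{u_1,\ldots,u_m\}$ then contract (within the induced subgraph on their union) to $K_4\backslash e$ with unique non-edge $V_aV_b$. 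Degenerate cases -- where $V_a$ is empty, or where $T$ is so dense on $C^*$ that no bounding arc has an interior -- are handled by a short case analysis directly exhibiting $K_4\backslash e$ as an induced subgraph on $u_m$ together with three suitably chosen vertices of $C^*$. The main technical obstacle in this case is precisely that stray edges from $u_m$ to $V(C^*)$ can destroy the non-edge between $V_a$ and $V_b$; the choice of $v_j$ as the first $T$-element encountered along $C^*$ is designed to eliminate them.
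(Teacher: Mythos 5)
The chordal half of your argument is correct, as is the reduction to a vertex $v$ with two non-adjacent neighbours, but there is a genuine gap in the non-chordal case. With branch sets $V_c=\{v_i\}$, $V_d=\{v_j\}$, $V_a=V(A)\setminus\{v_i,v_j\}$, and $V_b=\{u_1,\dots,u_m\}$, the contraction yields $K_4\backslash e$ with unique non-edge $V_aV_b$ only if the other five pairs are all adjacent; in particular you need the edge $V_cV_d$, i.e.\ $v_iv_j\in E(G)$. Since $C^*$ is an induced cycle and $v_j$ is merely the first vertex of $T$ met when traversing $C^*$ from $v_i$, the vertices $v_i$ and $v_j$ are in general not adjacent, and then your four branch sets produce only the $4$-cycle $V_cV_aV_dV_b$. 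Concretely, let $C^*$ be a $6$-cycle $v_1v_2\cdots v_6$ and let $u$ be a single extra vertex adjacent exactly to $v_1$ and $v_4$: here $m=1$, $T=\{v_4\}$, $V_a=\{v_2,v_3\}$, and the contraction gives $C_4$, not $K_4\backslash e$. This is the generic situation, not one of the degenerate cases you set aside ($V_a=\emptyset$, or $T$ too dense to leave an arc with interior).

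The missing ingredient is the second arc $A'$ of $C^*$ from $v_i$ to $v_j$: together with $A$ and $Q$ it forms a theta subgraph, and it is this third internally disjoint $v_i$--$v_j$ path that supplies the edge between the two degree-$3$ branch sets of $K_4\backslash e$. One repair is to enlarge $V_c$ to $\{v_i\}\cup(V(A')\setminus\{v_i,v_j\})$; this set is connected, it meets $V_d=\{v_j\}$ in the last edge of $A'$, and any stray edges from $u_m$ into the interior of $A'$ are harmless because $V_cV_b$ is supposed to be an edge anyway. Your choice of $v_j$ as the first $T$-vertex along $A$ then does exactly what is needed to keep $V_aV_b$ a non-edge, and the only residual degenerate case is $V_a=\emptyset$. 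For comparison, the paper first reduces to $3$-connectivity via Lemma~\ref{always3C}, takes a non-adjacent pair $x,y$ at distance two with common neighbour $w$ and a shortest cycle through $x$ and $y$ avoiding $w$; its contraction explicitly keeps three internally disjoint $x$--$y$ paths in play, which is precisely the point your construction drops.
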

\begin{proof}
    Suppose $G$ is a 2-connected graph with no $K_4\backslash e$ as an induced minor.  By Lemma~\ref{always3C}, either $G$ is a cycle, or $G$ is 3-connected.
    Assume that $G$ is not a cycle and not a clique and let $x$ and $y$ be two non-adjacent vertices of $G$ for which the distance $d$ between them is a minimum. Suppose $d\geq 3$. 
 Take a minimum-length $(x,y)$-path    
     and let $x'$ and $y'$ be two adjacent internal vertices of this path where $xx'$ is an edge. Then the distance from $x$ to $y'$ must be 1 by the choice of $x$ and $y$. Hence there is a shorter $(x,y)$-path, a contradiction. Thus $d=2$. Let $w$ be the internal vertex on a length-2 $(x,y)$-path. Since $G$ is 3-connected, $G-w$ is 2-connected. Let $C$ be a shortest cycle of $G$ containing $x$ and $y$ and avoiding $w$. Take $P_1$ and $P_2$ to be the two $(x,y)$-paths such that $E(P_1)\cup E(P_2)=E(C)$ and $|E(P_1)|\leq |E(P_2)|$. If there is an edge from an internal vertex $z$ of $P_1$ or of $P_2$ to $w$, then we may contract $P_1$ or $P_2$ down to two edges, namely $xz$ and $zy$, and obtain $K_4\backslash e$ as an induced minor of $G$, a contradiction. Therefore, there are no edges from $w$ to an internal vertex of $P_1$ or of $P_2$. Then, by contracting $P_1$ to a single edge and $P_2$ to two edges, we obtain $K_4\backslash e$ as an induced minor of $G$, a contradiction. 
\end{proof}

The next two results, which  are due to Hall \cite{hall} (see also \cite[Proposition 12.2.11]{ox1}) and Oxley~\cite[Lemma 3]{ox3}, will be used in the proof of Lemma~\ref{notcograph}.

\begin{theorem}\label{hallres}
    If $G$ is a $3$-connected graph, then $G$ has no $K_{3,3}$-minor if and only if either $G$ is planar or its associated simple graph is $K_5$.
\end{theorem}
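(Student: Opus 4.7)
My plan is to prove Theorem~\ref{hallres} by treating the two directions separately. The forward direction is immediate: $K_{3,3}$ is non-planar, so no planar graph can have it as a minor, and $K_5$ has only five vertices, one fewer than $K_{3,3}$, so $K_5$ cannot have a $K_{3,3}$-minor either. Hence any graph whose simplification is $K_5$ or that is planar avoids $K_{3,3}$ as a minor.

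For the converse, I would suppose $G$ is 3-connected with no $K_{3,3}$-minor and assume $G$ is non-planar; the goal is to show the simplification of $G$ is $K_5$. Invoking Kuratowski's theorem, the non-planar graph $G$ contains a subdivision of $K_5$ or of $K_{3,3}$. The second possibility would yield a $K_{3,3}$-minor directly, so $G$ must contain a subdivision $H$ of $K_5$ with branch vertices $v_1,\ldots,v_5$. Choose such an $H$ so that the total length of its ten subdivision paths is minimum.

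I would then establish two claims whose combination forces the simplification of $G$ to equal $K_5$: (i) every subdivision path of $H$ is a single edge, and (ii) $V(H) = V(G)$. For (i), if some path $P_{ij}$ has an internal vertex $u$, then since $G$ is 3-connected, Menger's theorem supplies a path $Q$ from $u$ into $V(H) \setminus \{v_i, v_j\}$ internally disjoint from $H$, landing at another branch vertex or at an internal vertex of some other subdivision path. For (ii), if some $w \notin V(H)$ exists, then 3-connectivity gives three internally disjoint paths from $w$ to $V(H)$ ending at three distinct vertices of $H$. In each situation, my plan is to use the new attaching structure together with the $K_5$-subdivision to exhibit a $K_{3,3}$-minor: one chooses a bipartition of the six resulting ``virtual'' branch vertices (the original five together with $u$ or $w$) so that every pair across the bipartition is joined by a path in $H \cup Q$ that is internally disjoint from the others, which gives the required minor and contradicts the hypothesis.

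The main obstacle is the case analysis determining where the attaching paths land. Each landing position (an internal vertex of a subdivision path sharing a branch vertex with $P_{ij}$, an internal vertex of a subdivision path disjoint from $\{v_i, v_j\}$, or another branch vertex itself) forces a different bipartition of the branch vertices to realise the desired $K_{3,3}$-minor. Exploiting the vertex-transitivity of $K_5$ on branch vertices and its edge-transitivity on subdivision paths should reduce the number of inequivalent configurations to a short list, and the minimality of $H$ should rule out the degenerate cases in which the attaching path reroutes into $H$ to produce an even smaller $K_5$-subdivision, so that verifying the prototype $K_{3,3}$-minor in each remaining configuration is the essential technical step.
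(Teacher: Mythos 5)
The first thing to say is that the paper does not prove Theorem~\ref{hallres} at all: it is stated as a known result of Hall, with a pointer to \cite[Proposition 12.2.11]{ox1} for a proof, and is used as a black box in the proof of Lemma~\ref{notcograph}. So there is no in-paper argument to compare yours against, and I can only assess your sketch on its own terms. Your forward direction is fine, and your plan for the converse (Kuratowski, a $K_5$-subdivision $H$, then the two claims that every branch path of $H$ is an edge and that $V(H)=V(G)$) is the standard route and does succeed.

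As written, though, it is a plan rather than a proof: the entire content sits in the deferred case analysis, and you should carry it out. For the record, the cases close. For claim (i), with $u$ an internal vertex of the branch path $P_{ij}$, 3-connectivity gives an $H$-path $Q$ in $G-\{v_i,v_j\}$ from the interior of $P_{ij}$ to $H-V(P_{ij})$ (take any connecting path there and truncate it between its last vertex on $P_{ij}$ and the next vertex of $H$). After contracting each branch-path segment to a single edge, the landing sites reduce to three configurations, each of which yields a $K_{3,3}$-minor: (a) $Q$ joins the subdivision vertex $a$ of $v_1v_2$ to a branch vertex $v_3$, giving $K_{3,3}$ with parts $\{a,v_4,v_5\}$ and $\{v_1,v_2,v_3\}$; (b) $Q$ joins subdivision vertices $a$ of $v_1v_2$ and $b$ of $v_3v_4$, giving branch sets $\{v_5\},\{a\},\{v_4\}$ versus $\{v_1\},\{v_2\},\{v_3,b\}$; (c) $Q$ joins subdivision vertices $a$ of $v_1v_2$ and $b$ of $v_1v_3$, giving branch sets $\{a,b\},\{v_4\},\{v_5\}$ versus $\{v_1\},\{v_2\},\{v_3\}$. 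In particular, the minimality of $H$ that you invoke is not actually needed; every landing site produces the minor directly. Claim (ii) is then immediate: once $H$ is an honest $K_5$, a fan of three internally disjoint paths from a vertex $w\notin V(H)$ ends at three branch vertices, say $v_1,v_2,v_3$, and the branch sets $\{v_1\},\{v_2\},\{v_3\}$ versus $\{v_4\},\{v_5\},\{w\}\cup(\text{path interiors})$ give $K_{3,3}$. With these verifications written out, your argument is a complete and correct proof of the cited theorem.
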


\begin{lemma}\label{m*k33}
There is no simple rank-$4$ regular matroid having $M^*(K_{3,3})$ as a proper restriction.
\end{lemma}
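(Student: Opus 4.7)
The plan is to argue by contradiction, using the classical size bound on simple regular matroids. Suppose that $M$ is a simple rank-$4$ regular matroid having $M^*(K_{3,3})$ as a proper restriction. Since $|E(M^*(K_{3,3}))|=9$ and $r(M^*(K_{3,3}))=4$, this forces $|E(M)|\ge 10$.

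First, I would invoke Heller's theorem (see, e.g., \cite{ox1}), which asserts that a simple rank-$r$ regular matroid has at most $\binom{r+1}{2}$ elements, with equality if and only if $M\cong M(K_{r+1})$. Specializing to $r=4$ gives $|E(M)|\le 10$, so in fact $|E(M)|=10$ and $M\cong M(K_5)$.

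Next, I would observe that every $9$-element restriction of $M(K_5)$ has the form $M(K_5\backslash f)$ for some edge $f$ of $K_5$, and in particular is graphic. Hence the hypothesized restriction $M^*(K_{3,3})$ would be graphic. But Whitney's planarity criterion states that $M(G)$ is cographic if and only if $G$ is planar; since $K_{3,3}$ is non-planar, $M(K_{3,3})$ is not cographic, equivalently $M^*(K_{3,3})$ is not graphic. This contradicts the conclusion of the previous step, completing the proof.

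The main obstacle is essentially just the need to cite the Heller size bound as a black box; the graphic-versus-non-graphic comparison then does all the remaining work. A more self-contained alternative would be to enumerate the at most six possible simple single-element binary extensions of $M^*(K_{3,3})$ within its rank-$4$ span in $PG(3,2)$ and verify that each introduces an $F_7$- or $F_7^*$-minor, but this is noticeably more laborious than applying the size bound.
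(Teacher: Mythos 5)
Your proof is correct. Note that the paper does not actually prove this lemma: it is quoted from Oxley's 1979 paper (\cite[Lemma~3]{ox3}), so there is no in-paper argument to compare against. Your route is nevertheless a clean, essentially self-contained derivation using only tools the paper already relies on elsewhere: the extremal bound $|E(M)|\le\binom{r+1}{2}$ for a simple rank-$r$ regular matroid, together with the characterization of equality as $M(K_{r+1})$, is exactly what is invoked at the end of the proof of Theorem~\ref{noC4K4e} via \cite[Proposition~14.10.3]{ox1} and \cite{murty}. The counting step is right ($M^*(K_{3,3})$ has $9$ elements and rank $9-5=4$, so a proper rank-$4$ simple regular extension has exactly $10$ elements and is $M(K_5)$), and the finish is sound: every restriction of a graphic matroid is graphic, whereas $M^*(K_{3,3})$ is not graphic by Whitney's planarity criterion since $K_{3,3}$ is non-planar. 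Two minor quibbles, neither affecting correctness: the equality case of the size bound is usually attributed to Murty (or to \cite[Proposition~14.10.3]{ox1}) rather than to Heller, whose theorem gives only the inequality; and you could shorten the middle step by observing directly that any restriction of $M(K_5)$ is graphic, without enumerating the $9$-element restrictions.
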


\begin{lemma}\label{notcograph}
    Suppose $M$ is a cographic matroid such that $M$ is not graphic. Then $M$ has $M(K_4\backslash e)$ as an induced minor.
\end{lemma}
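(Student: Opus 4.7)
My plan is to reduce to $3$-connected $M$, apply Hall's theorem (Lemma~\ref{hallres}) to split into the subcases $G=K_5$ and $G$ has a $K_{3,3}$-minor, and use Lemma~\ref{m*k33} to turn the $M^*(K_{3,3})$-minor into an induced minor from which $M(K_4\backslash e)$ falls out.

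First I would reduce to $M$ being $3$-connected. If $M$ is disconnected, some direct summand is cographic but not graphic (graphic matroids being closed under direct sums), and we finish by induction on $|E(M)|$. If $M$ is connected but not $3$-connected, I note that every cographic non-graphic matroid has at least $9$ elements (the smallest non-planar graph having $9$ edges) and that $M$ is not a circuit (circuits being graphic); so Lemma~\ref{always3C} applies, and its cases (ii) and (iii) directly furnish $M(K_4\backslash e)$ as an induced minor.

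Assume now $M = M^*(G)$ with $G$ a $3$-connected simple graph. Since $M$ is not graphic, $G$ is not planar, so by Lemma~\ref{hallres} either $G$ has a $K_{3,3}$-minor or $G = K_5$. In the first subcase, $M$ has $M^*(K_{3,3}) = M/X\backslash Y$ as a minor, and I would choose $X$ maximal so that $r(M/X) = r(M^*(K_{3,3})) = 4$. Then $(M/X)^{\si}$ is a simple rank-$4$ regular matroid (regular, since $M$ is cographic) containing $M^*(K_{3,3})$ as a restriction. Lemma~\ref{m*k33} forces $(M/X)^{\si} = M^*(K_{3,3})$, so $M^*(K_{3,3})$ is an induced minor of $M$. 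To exhibit $M(K_4\backslash e)$ as an induced restriction of $M^*(K_{3,3})$, I would take the flat $F = \{a_1b_1, a_1b_2, a_1b_3, a_2b_1, a_3b_1\}$ (the union of the stars of two adjacent vertices of $K_{3,3}$ under the bipartition $\{a_1,a_2,a_3\}, \{b_1,b_2,b_3\}$): its complement in $E(K_{3,3})$ is the $4$-cycle $a_2b_2a_3b_3a_2$, so $F$ is a flat, and a direct computation of the bonds of the resulting contracted graph confirms $M^*(K_{3,3})|F \cong M(K_4\backslash e)$.

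In the remaining subcase $G = K_5$, I would construct the induced minor directly: contract the two non-adjacent edges $12, 45$, noting that $K_5\backslash\{12,45\}$ is $3$-edge-connected so $M^*(K_5)/\{12,45\}$ is simple. Take $F = \{13,14,15,24,34\}$; its complement is the triangle $\{23,25,35\}$, so $F$ is a flat, and a direct bond computation shows the restriction is isomorphic to $M(K_4\backslash e)$, with the triangles $\{13,14,15\}$ and $\{14,24,34\}$ sharing the element $14$. The main obstacle is the $K_{3,3}$-minor case: the upgrade from minor to induced minor is nontrivial, and Lemma~\ref{m*k33} is the crucial rigidity statement that makes this passage possible.
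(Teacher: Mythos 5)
Your proof is correct and follows essentially the same route as the paper's: reduce to the $3$-connected case via Lemma~\ref{always3C}, split via Theorem~\ref{hallres} into the $K_{3,3}$-minor and $K_5$ cases, use Lemma~\ref{m*k33} (with the Scum Theorem) to upgrade the $M^*(K_{3,3})$-minor to an induced minor, and finish the $K_5$ case by an explicit construction. Your explicit flats (complements of a $4$-cycle in $K_{3,3}$ and of a triangle in $K_5\backslash\{12,45\}$) check out and merely replace the paper's dual description of contracting a triangle of $K_5$ and deleting parallel edges.
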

\begin{proof}
    Suppose $M$ does not have $M(K_4\backslash e)$ as an induced minor. By Lemma~\ref{always3C}, we may assume that $M$ is $3$-connected. Since $M$ is cographic but not graphic, $M$ is the bond matroid of some non-planar graph $G$. Then $G$ has $K_{3,3}$ or $K_5$ as a minor. Suppose $G$ has $K_{3,3}$ as a minor and hence $M$ has $M^*(K_{3,3})$ as a minor. If $M$ does not have $M^*(K_{3,3})$ as an induced minor, then, by Lemma~\ref{m*k33}, $M$ is not regular, a contradiction. Therefore, $M$ has $M^*(K_{3,3})$ as an induced minor,  so $M$ has $M(K_4\backslash e)$ as an induced minor, a contradiction. It now follows, by Theorem \ref{hallres}, that the simple graph associated with $G$ is $K_5$. Since $M^*(G)\cong M$, in forming an induced minor of $M$, any deletion of elements of $G$ is allowed.   Therefore, we may assume $G\cong K_5$. However, by contracting a triangle of $K_5$ and deleting one edge from each of the resulting parallel classes, we obtain the planar dual of $K_4\backslash e$. Hence, by deleting a triad of $M$ and contracting one element from each of the resulting non-trivial series classes, we obtain $M(K_4\backslash e)$ as an induced minor of $M$, a contradiction.
\end{proof}

We will  use  the following result of Seymour \cite{seymour3} (see also \cite[Corollary 12.2.6]{ox1}).

\begin{lemma}\label{seymournoF7}
    Every binary matroid with no $F_7$-minor can be obtained from regular matroids and copies of $F_7^*$ by a sequence of direct sums and $2$-sums.
\end{lemma}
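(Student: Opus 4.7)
The plan is to derive the lemma from two classical ingredients: Tutte's excluded-minor characterization of regular matroids, and Seymour's Splitter Theorem applied to $F_7^*$ inside the class $\mathcal{B}$ of binary matroids with no $F_7$-minor. Recall that a binary matroid is regular if and only if it has neither $F_7$ nor $F_7^*$ as a minor; thus for members of $\mathcal{B}$, the sole obstruction to being regular is the presence of an $F_7^*$-minor.

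The heart of the argument is to show that $F_7^*$ is a splitter for $\mathcal{B}$, meaning that every $3$-connected member of $\mathcal{B}$ having $F_7^*$ as a minor is isomorphic to $F_7^*$. By the Splitter Theorem, it suffices to verify that every $3$-connected binary single-element extension of $F_7^*$ has $F_7$ as a minor, and likewise for every $3$-connected binary single-element coextension. One exploits the fact that $F_7^*$ embeds essentially uniquely in $PG(3,2)$: any additional binary point yields, after an appropriate contraction, a Fano configuration, producing $F_7$ as a minor. The coextension case is handled dually by a finite check inside $PG(4,2)$.

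Given the splitter result, the assembly is routine. By the standard structural theory of matroid connectivity, any binary matroid $M$ decomposes into $3$-connected pieces (and matroids of rank or corank at most $1$, which are regular) via repeated direct sums and $2$-sums. Since $\mathcal{B}$ is closed under minors, direct sums, and $2$-sums, each such piece of $M$ again lies in $\mathcal{B}$. Applying the splitter conclusion to each piece, it is either free of an $F_7^*$-minor and hence regular by Tutte's theorem, or it is isomorphic to $F_7^*$. Reassembling via the same direct sums and $2$-sums delivers the stated decomposition.

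The principal obstacle is the splitter verification for $F_7^*$: one must enumerate, up to isomorphism, its $3$-connected binary single-element extensions and coextensions and exhibit an $F_7$-minor in each. This is a finite but delicate case analysis, and the conceptual shortcut is that $F_7^*$ sits essentially uniquely inside $PG(3,2)$, so any binary enlargement is forced to complete a Fano configuration; everything else in the proof is bookkeeping about $2$-sums and the closure properties of $\mathcal{B}$.
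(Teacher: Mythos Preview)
The paper does not prove this lemma at all; it simply quotes it as a known result of Seymour (the citation \cite{seymour3}, with a pointer to \cite[Corollary~12.2.6]{ox1}). Your outline is the standard argument behind that citation: show that $F_7^*$ is a splitter for the minor-closed class of binary matroids with no $F_7$-minor, then invoke the canonical tree decomposition into $3$-connected pieces via direct sums and $2$-sums, and finish with Tutte's excluded-minor theorem to identify each piece as regular or $F_7^*$. So your approach is correct and matches the literature proof the paper is pointing to.

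One small sharpening: the coextension half of the splitter check is not a ``finite check inside $PG(4,2)$'' but is immediate. A $3$-connected binary single-element coextension of $F_7^*$ would dualize to a $3$-connected binary single-element extension of $F_7$; since $F_7=PG(2,2)$ already contains every point of the ambient rank-$3$ binary space, any such extension has a loop or parallel pair and hence fails to be $3$-connected. Thus only the extension side requires work, and there the fact you allude to---every rank-$4$ binary proper extension of $F_7^*$ contains $F_7$---is exactly what the paper itself uses elsewhere (in the proof of Lemma~\ref{eximC4F7}).
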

The next result is known as Seymour's Decomposition Theorem \cite{seymour2} (see also \cite[Theorem 13.1.1]{ox1}).
\begin{theorem}\label{seymourdecomp}
    Every regular matroid $M$ can be constructed by using direct sums, $2$-sums, and $3$-sums starting with matroids each of which is either graphic, cographic, or isomorphic to $R_{10}$ and each of which is a minor of $M$.
\end{theorem}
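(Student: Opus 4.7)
This result is Seymour's Decomposition Theorem, and its proof in \cite{seymour2} is a long and intricate structural argument that cannot be compressed into a short sketch. Nevertheless, the overall strategy I would follow mirrors Seymour's original approach: proceed by induction on $|E(M)|$, first reducing to the $3$-connected case, and then establishing a structural dichotomy for $3$-connected regular matroids.

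First I would handle the low-connectivity reductions. If $M$ is disconnected, peel off a direct summand and apply the inductive hypothesis to each piece, since both pieces are regular and are minors of $M$. If $M$ is connected but not $3$-connected, then $M$ has a $2$-separation and hence decomposes as the $2$-sum of two proper minors $M_1$ and $M_2$, each of which inherits regularity; induction finishes both parts. The small base cases can be verified directly, noting that matroids on at most, say, four or five elements are automatically graphic.

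The substantive work lies in the $3$-connected case, where the target is to prove the dichotomy: every $3$-connected regular matroid is graphic, cographic, isomorphic to $R_{10}$, or admits an exact $3$-separation that yields a proper $3$-sum decomposition into smaller regular matroids. My plan here would be to combine Tutte's excluded-minor characterization of regular matroids (no $F_7$- or $F_7^*$-minor) with Splitter-Theorem style minor-finding arguments to locate, inside any $3$-connected regular matroid that is not in the three named classes, a copy of the $12$-element regular matroid $R_{12}$ as a minor. The natural exact $3$-separation of $R_{12}$ can then be transferred to a $3$-separation of $M$, and this in turn yields the desired $3$-sum decomposition.

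The main obstacle, and by far the deepest part of Seymour's argument, is precisely this minor-finding step: showing that a $3$-connected regular matroid with no $R_{12}$-minor must be graphic, cographic, or $R_{10}$. This requires an extensive case analysis, together with careful control over how $3$-separations propagate under contractions and deletions, so that once an $R_{12}$-minor is found the resulting $3$-sum factors $M_1$ and $M_2$ are themselves minors of $M$ rather than merely abstract quotients. A secondary technical burden is to verify that both parts of the resulting $3$-sum remain regular, which uses the fact that regularity is preserved under minors and the specific way that $3$-sums decompose along a common triangle.
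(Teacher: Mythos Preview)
The paper does not prove this theorem at all: it is stated as a known result, attributed to Seymour \cite{seymour2} (with a pointer to \cite[Theorem 13.1.1]{ox1}), and then used as a tool in the proof of Theorem~\ref{noK4eF7}. So there is no proof in the paper to compare your proposal against.

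Your outline is a faithful high-level summary of Seymour's original strategy (reduce to the $3$-connected case, locate an $R_{12}$-minor when the matroid is not graphic, cographic, or $R_{10}$, and lift the natural $3$-separation of $R_{12}$ to $M$), and you correctly flag the deep step as the $R_{12}$-minor-finding and the propagation of the resulting $3$-separation. That said, as a self-contained ``proof proposal'' it is really just a roadmap: the substantive content---the Splitter-Theorem arguments, the blocking-sequence machinery to lift the $3$-separation, and the long case analysis---is precisely what is omitted. For a result the paper merely quotes, that is appropriate, but you should be clear that what you have written is an outline of a citation, not an independent argument.
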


We now prove the second main theorem of this section.

\begin{proof}[Proof of Theorem \ref{noK4eF7}]
    Certainly all circuits, all cycle matroids of complete graphs, and $F_7^*$ are contained in the set of connected members of $\exim(M(K_4\backslash e), F_7)$. 

    Now suppose $M$ is a connected member of $\exim(M(K_4\backslash e),F_7)$. Assume $M$ is not a circuit, the cycle matroid of a complete graph, or $F_7^*$. By Lemma~\ref{always3C}, we may assume $M$ is 3-connected. Since $M$ does not have $F_7$ as an induced minor, $M$ does not have $F_7$ as a minor and therefore, by Lemma \ref{seymournoF7}, $M$ can be built from regular matroids and copies of $F_7^*$ by $2$-sums. Since $M$ is 3-connected and, by assumption,  $M\not\cong F_7^*$, we deduce $M$ is a regular matroid. 
    Then, by Theorem \ref{seymourdecomp}, $M$ can be obtained from graphic matroids, cographic matroids, and copies of $R_{10}$ by 2-sums or 3-sums, and $M$ has each starting matroid as a minor. Since such a minor can be achieved by contracting and simplifying only, $M$ has each starting matroid as an induced minor. If any of the matroids used to obtain $M$ is isomorphic to $R_{10}$, then $M$ has $M^*(K_{3,3})$ as an induced minor 
    since $R_{10}/x \cong M^*(K_{3,3})$ for all elements $x$. Therefore, $M$ has $M(K_4\backslash e)$ as an induced minor, a contradiction. If any of the matroids used to obtain $M$ is cographic but not graphic, then, by Lemma~\ref{notcograph},  $M$ has $M(K_4\backslash e)$ as an induced minor, a contradiction. Thus, all of the matroids used to obtain $M$ must be graphic. 
    Any 2-sum or 3-sum of graphic matroids is a graphic matroid. By Lemma~\ref{noK4e}, $M$ is the cycle matroid of either a cycle or  a clique, a contradiction.
\end{proof}

For a matroid $M$ and a positive integer $k$, denote a vertical $k$-separation   by $(X,G,Y)$ where $(X,G,Y)$ is a partition of $E(M)$ with 
$G=\cl(X)\cap \cl(Y)$ and both $(X\cup G,Y)$ and $(X,Y\cup G)$ are vertical $k$-separations of $M$. Recall that a matroid is \textit{round} if, for all positive integers $k$, it has no vertical 
$k$-separations. 

\begin{lemma}\label{eximC4K4eRound}
Each member of $\exim(M(C_4),M(K_4\backslash e))$ is either disconnected or round.
\end{lemma}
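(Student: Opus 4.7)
The plan is to prove the contrapositive: suppose $M$ is a connected member of $\exim(M(C_4),M(K_4\backslash e))$ that is not round, and derive $M(K_4\backslash e)$ as an induced minor of $M$. By hypothesis $M$ has a vertical $k$-separation $(X,G,Y)$ with $G=\cl(X)\cap\cl(Y)$. Since $\cl(X)\cup\cl(Y)=E(M)$, submodularity gives $r(X)+r(Y)=r(M)+r(G)$; combined with the connectedness of $M$ this forces $r(G)\geq 1$, so $G$ is non-empty. By Lemma~\ref{chordal} the matroid $M$ is chordal, so every circuit of size at least four has a chord.

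I would next produce, via chord minimization, a pair of triangles. Choose $(g,C_X)$ with $g\in G$ and $C_X\subseteq X\cup\{g\}$ a circuit through $g$ of minimum size. If $|C_X|\geq 4$, chordality gives a chord $z\in\cl(C_X)-C_X\subseteq(X\cup G)-C_X$, and the binary structure of $C_X\cup\{z\}$ (which has cyclomatic number two) decomposes $C_X\cup\{z\}$ into two further circuits through $z$, each of size at most $|C_X|-1$. If $z\in X$, the one containing $g$ lies in $X\cup\{g\}$ and is shorter than $C_X$, contradicting minimality; if $z\in G-\{g\}$, the one not containing $g$ lies in $X\cup\{z\}$ and gives a smaller pair $(z,C'')$, again a contradiction. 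Hence $|C_X|=3$. Refining this to lexicographic minimization of $(|C_X|,|C_Y|)$ over triples $(g,C_X,C_Y)$ with $C_X\subseteq X\cup\{g\}$ and $C_Y\subseteq Y\cup\{g\}$ circuits through a common $g\in G$, and applying the symmetric chord reduction on the $Y$-side (iterating chord replacements alternately on the two sides when needed), produces triangles $\{g,x_1,x_2\}\subseteq X\cup\{g\}$ and $\{g,y_1,y_2\}\subseteq Y\cup\{g\}$ through a single $g$.

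With these in hand, set $F_0=\{g,x_1,x_2,y_1,y_2\}$. The triangle relations $g=x_1+x_2=y_1+y_2$ in the binary representation yield $x_1+x_2+y_1+y_2=0$, so $\{x_1,x_2,y_1,y_2\}$ is a $4$-circuit and $r(F_0)=3$; a direct check as after Lemma~\ref{always3C} then gives $M\mid F_0\cong M(K_4\backslash e)$. Any element of $\cl_M(F_0)\setminus F_0$ is of the form $x_i+y_j$, and the partition structure $\cl(X)\cap Y=\emptyset=\cl(Y)\cap X$ (exactly as in the case analysis used for Lemma~\ref{always3C}) forces any such element to lie in $G$. If $F_0$ is a flat of $M$, then $M\mid F_0$ is an induced restriction isomorphic to $M(K_4\backslash e)$, contradicting $M\in\exim(M(K_4\backslash e))$.

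To dispose of the remaining case, I would choose $g$ inside a basis of $G$ and pass to the induced minor $M'=\si(M/(G-\{g\}))$. This collapses $G$ down to $\{g\}$ and removes every extra chord of the $4$-circuit that sits in $G-\{g\}$. The triangles $\{g,x_1,x_2\}$, $\{g,y_1,y_2\}$ and the $4$-circuit $\{x_1,x_2,y_1,y_2\}$ are preserved by the contraction, since $x_1,x_2,y_1,y_2$ are disjoint from $G-\{g\}$ and cannot become parallel in $M'$ without forcing a cross-side closure violating $\cl(X)\cap Y=\emptyset$. Therefore $F_0$ is a flat of $M'$ with $M'\mid F_0\cong M(K_4\backslash e)$, giving $M(K_4\backslash e)$ as an induced minor of $M$, the desired contradiction. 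The principal obstacle is the lexicographic step extracting a common $g\in G$ with triangles on both sides: the argument requires verifying that the alternating chord-reduction process terminates at a pair $(m_X(g),m_Y(g))=(3,3)$, which rests on the finiteness of $G$ and careful bookkeeping of which coordinate strictly decreases at each step.
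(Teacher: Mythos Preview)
Your argument has a genuine gap at the very first step. You claim that submodularity gives the \emph{equality} $r(X)+r(Y)=r(M)+r(G)$ and that connectedness then forces $r(G)\geq 1$, so $G\neq\emptyset$. Submodularity only gives the inequality $r(X)+r(Y)\geq r(M)+r(G)$, and connectedness of $M$ gives $r(X)+r(Y)-r(M)=k-1\geq 1$, which bounds the local connectivity, not $r(G)$. The set $G=\cl(X)\cap\cl(Y)$ can be empty even when $M$ is connected: take any $2$-sum of simple binary matroids along a basepoint not in either part, e.g.\ $M(C_4)$ itself. The paper handles this case separately by invoking \cite[Lemma~2.3]{douthox2}, which says that a connected matroid with a vertical separation having empty guts must have $M(C_4)$ as an induced minor; this is precisely where the $M(C_4)$-exclusion is used. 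Your proof never touches this case, so as written it does not go through.

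There is a second, smaller problem in your contraction step. You write ``choose $g$ inside a basis of $G$'' and contract $G-\{g\}$, but $g$ was already determined by your triangle-extraction procedure and there is no reason it lies outside $\cl(G-\{g\})$. If it does lie in that closure, contracting $G-\{g\}$ makes $g$ a loop and collapses the triangle $\{g,x_1,x_2\}$, so $F_0$ no longer gives $M(K_4\backslash e)$. The paper avoids this by first proving $r(G)=k-1$ (using the $G=\emptyset$ case again after contracting $G$), then choosing $g$ freely from a basis of $G$, and finally applying Lemma~\ref{always3C} to the resulting parallel connection rather than building the $M(K_4\backslash e)$ by hand. Your direct chord-reduction route is a reasonable alternative idea, but to make it work you would need to weave the choice of a basis-independent $g$ into the lexicographic minimization, and you must first dispose of the empty-guts case.
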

\begin{proof}
    Suppose $M$ is a connected member of $\exim(M(C_4),M(K_4\backslash e))$, and that $M$ has a vertical $k$-separation $(X,G,Y)$ for some $k\geq 2$. First suppose $G=\emptyset$. Then, by \cite[Lemma 2.3]{douthox2}, $M$ has $M(C_4)$ as an induced minor,
    a contradiction.

    If $r(G)<k-1$, then $M/G$ has a vertical $k'$-separation $(X',G',Y')$ with $G'=\emptyset$ and, by \cite[Lemma 2.3]{douthox2}, $M/G$ has $M(C_4)$ as an induced minor, a contradiction. We deduce that $r(G)=k-1$. Let $g$ be an element of $G$. Then $M'=M/(G-\{g\})$ is a parallel connection of $(M|\cl(X))/(G-g)$, and $(M|\cl(Y))/(G-g)$. By Lemma~\ref{always3C}, $M$ has $M(K_4\backslash e)$ as an induced minor, a contradiction.
\end{proof}
A flat $F$ in a matroid $M$ is a \textit{connected flat} if $M|F$ is connected.

\begin{corollary}\label{corround}
    If $F$ is a connected flat in a matroid $M$ in the class $\exim(M(C_4),M(K_4\backslash e))$, then $M|F$ is round.
\end{corollary}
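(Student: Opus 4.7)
The plan is to observe that this corollary follows almost immediately from Lemma~\ref{eximC4K4eRound} together with the fact that the class $\exim(M(C_4),M(K_4\backslash e))$ is closed under taking induced minors, and in particular under the operation of restricting to a flat.

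First I would note that, by the definition of an induced minor, the restriction $M|F$ of $M$ to the flat $F$ is an induced minor of $M$. Moreover, any induced minor of $M|F$ is obtained from $M|F$ by further restriction to a flat of $M|F$ followed by contraction and simplification; but a flat of $M|F$ is of the form $F'\cap F$ for some flat $F'$ of $M$ containing some specified set, and the combined operation can be realised as an induced-minor operation in $M$ itself. Hence $\exim(M(C_4),M(K_4\backslash e))$ is closed under restriction to flats, so $M|F$ lies in $\exim(M(C_4),M(K_4\backslash e))$.

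Next I would use the hypothesis that $F$ is a connected flat, which by definition means $M|F$ is connected. Applying Lemma~\ref{eximC4K4eRound} to $M|F$, since $M|F$ is a connected member of $\exim(M(C_4),M(K_4\backslash e))$, it must be round. This yields the conclusion.

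There is no real obstacle here — the result is essentially a packaging of Lemma~\ref{eximC4K4eRound} once one records that restriction to a flat keeps us inside the induced-minor-closed class. The only point worth being careful about is the observation that induced minors of $M|F$ are induced minors of $M$, which is the standard fact that restricting to a flat inside a flat is the same as restricting to that inner flat directly in $M$.
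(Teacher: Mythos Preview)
Your proposal is correct and matches the paper's approach: the paper states this as a corollary with no explicit proof, treating it as immediate from Lemma~\ref{eximC4K4eRound} together with the fact that $M|F$ is an induced restriction of $M$ and hence lies in $\exim(M(C_4),M(K_4\backslash e))$. Your middle paragraph is slightly more elaborate than necessary---since $F$ is a flat of $M$, the flats of $M|F$ are precisely the flats of $M$ contained in $F$, so closure under induced restrictions is immediate---but this does not affect correctness.
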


McNulty and Wu \cite{mcnulty} proved the following result.

\begin{lemma}\label{mcnultywu}
    Let $M$ be a $3$-connected binary matroid with at least four elements. Then, whenever $f$ and $g$ are distinct elements of $M$, there is a connected hyperplane of $M$ containing $f$ and avoiding $g$.
\end{lemma}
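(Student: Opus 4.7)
The plan is to exhibit a cocircuit $D^*$ of $M$ with $g \in D^*$ and $f \notin D^*$ whose complementary hyperplane $E(M) - D^*$ is connected in $M$. The existence of \emph{some} cocircuit through $g$ avoiding $f$ is straightforward: since $M$ is $3$-connected with $|E(M)| \geq 4$, it is cosimple, so $\{f,g\}$ is not a cocircuit; hence, as $M$ is connected and contains no coseries pair, there is a cocircuit containing $g$ but not $f$.

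Among all such cocircuits, I would then select $D^*$ so that the connected component $F$ of $M|(E(M) - D^*)$ containing $f$ has maximum cardinality. Writing $H = E(M) - D^*$, suppose for contradiction that $F \subsetneq H$, and pick $e \in H - F$. Since $M$ is connected, there is a circuit $C$ of $M$ through both $f$ and $e$; since $F$ and the component of $M|H$ containing $e$ are distinct components of $M|H$, $C$ cannot be contained in $H$. Thus $C \cap D^* \neq \emptyset$, and by orthogonality in a binary matroid, $|C \cap D^*|$ is even and hence at least $2$.

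The goal now is to modify $D^*$ into a cocircuit $\widetilde{D}^*$ with $g \in \widetilde{D}^*$ and $f \notin \widetilde{D}^*$ such that the component of $M|(E(M) - \widetilde{D}^*)$ containing $f$ strictly contains $F$. To produce $\widetilde{D}^*$, I would find, using $3$-connectedness, a second cocircuit $E^*$ whose intersection with $D^*$ captures a chosen element of $C \cap D^*$ without disturbing $g$, and then take $D^* \triangle E^*$; because $M$ is binary, this symmetric difference is a disjoint union of cocircuits, and the piece containing $g$ is a new cocircuit that still contains $g$, still avoids $f$, and no longer separates $f$ from $e$ along $C$. Iterating this procedure (or choosing $E^*$ cleverly so that every element of $C \cap D^*$ is dealt with at once) yields the desired $\widetilde{D}^*$ and the required contradiction with maximality.

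The hard part is producing the cocircuit $E^*$ with precisely the right intersection profile. A natural tool is fundamental cocircuits: fix a basis $B$ of $M$ chosen to contain $f$ and to interact well with $C$, and assemble $E^*$ from fundamental cocircuits of the elements of $C \cap D^*$ with respect to $B$, using that in a binary matroid the fundamental cocircuits span the cocircuit space. A secondary complication is the boundary case $|F| = 1$, where the ``strict enlargement'' step has little slack; here one can fall back on a direct argument exploiting the fact that, in a $3$-connected matroid with at least four elements, $f$ lies on enough long circuits to guarantee that a single well-chosen modification of $D^*$ already absorbs several elements of $H - F$ into the component of $f$.
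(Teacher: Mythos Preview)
The paper does not prove this lemma; it is quoted verbatim as a result of McNulty and Wu and used as a black box. So there is no ``paper's own proof'' to compare your argument against.

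That said, your proposal is a strategy outline, not a proof, and the gap is exactly where you yourself flag it. The extremal set-up (choose $D^*$ through $g$ avoiding $f$ with the $f$-component $F$ of $M|(E-D^*)$ maximal) is fine, and the observation that any circuit $C$ through $f$ and some $e\in H-F$ meets $D^*$ in an even, hence $\ge 2$-element, set is correct. But step~5 is not carried out. Taking a symmetric difference $D^*\triangle E^*$ with some auxiliary cocircuit $E^*$ does produce a disjoint union of cocircuits in a binary matroid, but you have not shown how to choose $E^*$ so that simultaneously: (i) the piece of $D^*\triangle E^*$ containing $g$ avoids $f$; (ii) that piece avoids all of $F$ (otherwise the $f$-component of the new hyperplane could shrink rather than grow); and (iii) enough of $C\cap D^*$ is removed that $f$ and $e$ become connected in the new hyperplane, bearing in mind that $E^*-D^*$ may introduce \emph{new} blockages along $C$. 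Your suggestion to ``assemble $E^*$ from fundamental cocircuits'' does not address any of (i)--(iii): an arbitrary $\mathrm{GF}(2)$-combination of fundamental cocircuits can easily hit $F$ or $f$, and there is no mechanism offered for keeping $g$ in the right piece. The $3$-connectedness hypothesis, which is essential (the lemma fails already for $U_{1,1}\oplus U_{2,3}$-type behaviour in merely connected matroids), is used in your sketch only to get the initial $D^*$; in the McNulty--Wu argument it does real work in the exchange step. Until that step is made precise, what you have is a plausible plan of attack rather than a proof.
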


We now prove the third main result of this section. 
\begin{proof}[Proof of Theorem~\ref{noC4K4e}]
    Clearly each of the matroids listed is a connected member of $\exim(M(C_4),M(K_4\backslash e))$. We now show that these are the only connected matroids in $\exim(M(C_4),M(K_4\backslash e))$. 
Let $M$ be a minimum-rank connected member of $\exim(M(C_4),M(K_4\backslash e))$ that is not one of the listed matroids. Then $r(M) \ge 4$.  Assume $r(M)=4$. By Lemma~\ref{always3C}, we may assume that $M$ is $3$-connected. If $M$ is graphic, then $M \cong M({\mathcal W}_4)$ or $M \cong M(K_5 \backslash e)$, so $M$ has $M(K_4 \backslash e)$ as an induced minor, a contradiction. Thus, $M$ is not graphic, so, by a theorem of Tutte~\cite{tutte}, either $M$ is not regular, or $M$ is regular and has $M^*(K_{3,3})$ as a minor. In the second case, by Lemma~\ref{m*k33},  $M \cong M^*(K_{3,3})$, so $M$ has $M(K_4 \backslash e)$ as an induced minor, a contradiction. We deduce that $M$ is not regular.  Then, by another theorem of Tutte~\cite{tutte0}, $M$
has $F_7$ or $F^*_7$ as a minor. By a result of Seymour~\cite{seymour} 
(see also \cite[Lemma 12.2.4]{ox1}), $M$ must have $F_7^*$ as a spanning restriction. As $F_7^*$ has $M(C_4)$ as an induced restriction, $|E(M)| \ge 8$. Thus, by \cite[Lemma 12.2.4]{ox1} again, $M$ has  $AG(3,2)$ or $S_8$ as a restriction. Since the last two matroids have $M(C_4)$ and $M(K_4\backslash e)$ as induced restrictions, we deduce that $|E(M)| \ge 9$. 
Thus, the complement $M^c$ of $M$ in $P_4$ is a proper restriction of $F_7$ or of $M(K_4) \oplus U_{1,1}$. Since $M$ is not $P_4$ or a single-element deletion of $P_4$, we see that $|E(M)| \le 13$. Now $M^c$ has $U_{2,2}$ or $U_{2,3}$ as a rank-2 flat $F$. Each of the three projective planes that contain $F$ must contain a point of $M^c$ that is not in $F$, otherwise $M$ has $M(K_4\backslash e)$ or $M(C_4)$ as an induced restriction. 
But each  of the possibilities for $M^c$ has rank 4 so none is a restriction of $F_7$. Thus, $M^c$ is a  restriction of  $M(K_4) \oplus U_{1,1}$ having at least five points including the point corresponding to $U_{1,1}$. Hence, $M^c$ is $M(K_4\backslash e) \oplus U_{1,1}$, $U_{2,3} \oplus U_{2,2}$, or $M(C_4) \oplus U_{1,1}$. In each case, $M$ has $M(C_4)$ or $M(K_4\backslash e)$ as an induced restriction. We conclude that $r(M) \neq 4$.

We may now assume that $r(M)\geq 5$. By Lemma~\ref{mcnultywu}, $M$ has a connected hyperplane, $H$. By the choice of $M$, it follows that $M|H$ is a projective geometry, 
a projective geometry with a point deleted, or the cycle matroid of a complete graph. 
Suppose first that $M|H$ is a projective geometry with one point deleted. Let $p$ be the projective point missing from $H$ and let $x$ be in $E(M)-H$. Suppose the point $y$ on the line from $x$ to $p$ is in $E(M)$. Then, for each element $h$ in $H$, by the choice of $M$, the plane spanned by $\{x,h,y\}$ is isomorphic to $M(K_4)$. In particular, for each $h$ in $H$, the third point on the line spanned by $\{x,h\}$ is in $E(M)$. Hence, $M|\cl_M(H\cup\{x\})$, which equals $M$,  is a projective geometry with one point deleted, a contradiction. Therefore, $y\not \in E(M)$. Let $z$ be in $E(M)-(H\cup \{x\})$. Then there is a point $g$ of $H$ such that $\{x,z,g\}$ is a triangle. However, $g$ is in a rank-4 flat $F$ of $H$ that spans $p$ and $M|F$ is isomorphic to $P_4-\{p\}$. This implies that $\cl(F\cup x)$ is a connected flat of $M$ and, by the choice of $M$, the matroid $M|\cl(F\cup x)$ is  a projective geometry,   a projective geometry with one point deleted, or the cycle matroid of a complete graph.  Since $M|\cl(F)$ has $F_7$ as a minor, $M|\cl(F\cup x)$ cannot be a the cycle matroid of a complete graph. Moreover, since $p\not\in E(M)$, the matroid $M|\cl(F\cup x)$ is a projective geometry with exactly one point deleted, a contradiction as the point on the line between $x$ and $p$ was assumed to  not be in $E(M)$. We deduce that no connected hyperplane of $M$ is a projective geometry with a point deleted.

Next suppose that $M|H$ is a projective geometry. Let $x$ and $y$ be distinct elements of $E(M)-H$. Then there is an element $h$ in $H$ such that $\{x,y,h\}$ is a triangle of $M$. Since, by Lemma~\ref{eximC4K4eRound}, $M$ is round, there must be an element $z$ in $E(M)-(H\cup \{x,y\})$. This implies there are elements $f_z$ and $g_z$ in $H$ such that $\{x,z,f_z\}$ and $\{y,z,g_z\}$ are triangles of $M$. Then $M|\cl(\{x,y,z,f,g_z,h_z\})$ is either $M(K_4)$ or $F_7$. Let $X$ be a largest subset of $E(M)-H$ containing $\{x,y,z\}$ such that $M|\cl(X)$ is a projective geometry with at most one point deleted. If $X=E(M)-H$, then, since $M$ is round, the cocircuit $X$ is spanning. Thus, $M$ is a projective geometry with at most one point deleted, a contradiction. Let $w$ be in $E(M)-(H\cup \cl(X))$. Consider $M|\cl(X \cup w)$.
For any point $t$ of $X$, there is a point $h_t$ in $H-\cl(X)$ such that $\{t,w,h_t\}$ is a triangle of $M$. Thus, $\cl(X \cup w)$ is a connected flat of $M$. As $r(\cl(X \cup w) \cap H) \ge 3$, we see that $\cl(X \cup w)$ has $F_7$ as a restriction. Hence either $\cl(X \cup w) = E(M)$, or $\cl(X \cup w)$ is a projective geometry or a projective geometry with a point deleted. The second possibility contradicts the choice of $X$. Since $\cl(X \cup w) = E(M)$, we see that $\cl(X)$ is a connected hyperplane of $M$. As $r(\cl(X) \cap H) \ge 3$, we deduce that $\cl(X) \cap H$ has $F_7$ as a restriction, so $\cl(X)$ is a projective geometry of rank $r(M) - 1$.

We show next that $M$ is a projective geometry by showing that each line through $w$ and a point $h$ of $H$ contains three points of $M$. This is certainly true if $h \in H - \cl(X)$ because $\cl(X)$ is a projective hyperplane. Now take $h$ in $\cl(X) \cap H$. Let $x_1$ be a point of $X - H$. Then the third point $x_2$ on the projective line spanned by $\{x_1,h\}$ is in  $E(M)$. Extend $\{x_1,x_2\}$ to a basis $B_X$ of $X$. Take $b$ in $B_X - \{x_1,x_2\}$. Then $\cl((B_X - b) \cup w)$ is a connected hyperplane of $M$ containing $h$ and $w$. Since this hyperplane has an $F_7$ restriction, it must be a projective geometry. Thus, the third point on the projective line spanned by $\{w,h\}$ is in $E(M)$. We conclude that $M$ is indeed a projective geometry, a contradiction. 
 We deduce that no connected hyperplane of $M$ is a projective geometry.

    We now know that, for every connected hyperplane $H$ of $M$, the matroid $M|H$ must be isomorphic to the cycle matroid of a complete graph. Hence $|H|={r(M) \choose 2}$. 
    \begin{sublemma}\label{completegraphsub}
        For all $x$ in $E(M)$, the matroid $M/x$ is the cycle matroid of a complete graph.
    \end{sublemma}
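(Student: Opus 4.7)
Our plan is to narrow $M/x$ to a short list of candidate structures using the minimality of $M$ as a counterexample, then to eliminate the two projective-geometry candidates by producing a forbidden induced restriction inside a carefully chosen rank-$3$ flat of $M$.

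First, since $M$ is a connected member of $\exim(M(C_4), M(K_4\backslash e))$, Corollary~\ref{corround} applied with $F = E(M)$ shows that $M$ is round, so $M/x$ is connected for every $x \in E(M)$. Combined with $r(M/x) = r(M) - 1 \geq 4$ and the minimality of $M$, we deduce that $M/x$ must be isomorphic to a projective geometry $P_{r-1}$, a projective geometry with a point deleted $P_{r-1}\backslash p$, or the cycle matroid $M(K_r)$, where $r = r(M)$ and the ranks force the subscripts. It therefore suffices to rule out the two projective possibilities. By Lemma~\ref{mcnultywu}, fix a connected hyperplane $H$ of $M$ with $x\notin H$; by the preceding paragraph of the theorem's proof, $M|H \cong M(K_r)$. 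Since $x\notin \cl_M(A)$ for every $A\subseteq H$, a rank computation gives $(M/x)|H = M|H$, so $H$ is an $M(K_r)$ spanning restriction of $M/x$.

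Now suppose for contradiction that $M/x \cong P_{r-1}$. Fix a binary representation in which $M$ sits inside $PG(r-1,2)$, with $H$ lying in a projective hyperplane $H_{\mathrm{PG}}$ complementary to $x$. The hypothesis $M/x = P_{r-1}$ forces $x+z \in E(M)$ for every ``missing'' vector $z \in H_{\mathrm{PG}}\setminus H$. We then split on whether some $h \in H$ satisfies $x+h \in E(M)$. If such an $h$ exists, choose a missing vector $z$ with $z+h$ also missing (possible for $r \ge 5$ because any $h\in H$ is a sum of two missing vectors in $H_{\mathrm{PG}}\setminus H$); then the plane $\cl_M(\{x, x+z, h\})$ contains exactly the five elements $\{x, x+z, h, x+h, x+z+h\}$, and these form an $M(K_4\backslash e)$-flat --- the two triangles $\{x,h,x+h\}$ and $\{x+z,h,x+z+h\}$ meet in $h$, with complementary $4$-circuit $\{x, x+z, x+h, x+z+h\}$. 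If no such $h$ exists, then $E(M) = H \cup \{x\} \cup \{x+z : z \in H_{\mathrm{PG}}\setminus H\}$ exactly; writing a missing $z$ as $h + h'$ with $h, h'\in H$, the plane $\cl_M(\{x, x+z, h\})$ reduces to the $4$-element circuit-flat $\{x, x+z, h, h'\}$, which is an $M(C_4)$. Either outcome contradicts $M \in \exim(M(C_4), M(K_4\backslash e))$. The case $M/x \cong P_{r-1}\backslash p$ is handled by the same argument, with $z$ and $h$ chosen to avoid the single projective line through $x$ that fails to be covered.

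The main obstacle will be the careful bookkeeping required to verify that each constructed plane is a flat of $M$ whose element set is exactly the four or five vectors indicated; this relies on the binary rank-$3$ classification (every connected rank-$3$ binary matroid is one of $M(C_4)$, $M(K_4\backslash e)$, $M(K_4)$, or $F_7$), on $H$ being a flat of $M$ with $M|H \cong M(K_r)$, and on the explicit ``missing vector'' inventory forced by the hypothesis on $M/x$. Once those closures are pinned down, the identification of the flat as $M(C_4)$ or $M(K_4\backslash e)$ follows immediately from the binary sum relations among the displayed elements.
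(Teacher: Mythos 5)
Your opening reductions are sound: roundness (Lemma~\ref{eximC4K4eRound}) gives that $\si(M/x)$ is connected, minimality then forces it to be $P_{r-1}$, $P_{r-1}\backslash p$, or $M(K_r)$, and Lemma~\ref{mcnultywu} legitimately supplies a connected hyperplane $H$ avoiding $x$ with $M|H\cong M(K_r)$. Your handling of the case $M/x\cong P_{r-1}$ is also correct: in Case A the five listed points do form an $M(K_4\backslash e)$-flat (the two triangles $\{x,h,x+h\}$ and $\{x+z,h,x+z+h\}$ meet only in $h$), and in Case B the four listed points form an $M(C_4)$-flat, modulo the small repair that not every missing $z$ is a sum of two elements of $H$ (a weight-$5$ vector is not), so you must \emph{choose} a missing $z$ that is, e.g.\ one of weight $3$. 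The genuine gap is the one-sentence dismissal of the case $M/x\cong P_{r-1}\backslash p$. There, exactly one point $w_0$ of $H_{\mathrm{PG}}\setminus H$ satisfies $x+w_0\notin E(M)$, and your Case-A recipe now needs an $h\in H$ with $x+h\in E(M)$ together with a representation $h=z+z'$ where $z$ and $z'$ are missing and \emph{both} differ from $w_0$. For $r=5$ this can fail outright: $H_{\mathrm{PG}}\setminus H$ is a five-point set isomorphic to $U_{4,5}$, so every element of $H$ is the sum of a \emph{unique} pair of missing vectors; if the only elements $h$ with $x+h\in E(M)$ are among the four whose unique pair contains $w_0$, no admissible $z$ exists, and the fallback choices ($z=w_0$ or $z+h=w_0$) yield the disconnected flat $U_{1,1}\oplus U_{2,3}$, which is not forbidden. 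A contradiction can still be reached in this sub-case, but only via a different construction (for instance a suitable plane not containing $x$), so it is not ``the same argument.''

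For comparison, the paper avoids this entirely by working inside a rank-$4$ flat $F$ of $H$ with $M|F\cong M(K_5)$, whose complement in $\cl_{P_r}(F)$ is a $5$-circuit. It selects the complement points $a$, $b$, and then $d$ with $x+a,x+b,x+d\in E(M)$ \emph{after} the at-most-one failing line through $x$ is accounted for (five candidates, at most one bad), and uses the absence of triangles and $4$-circuits in $U_{4,5}$ to force $\cl_M(\{x,d_x,y\})$ to be a connected rank-$3$ flat with four or five points, hence $M(C_4)$ or $M(K_4\backslash e)$. That argument treats $P_{r-1}$ and $P_{r-1}\backslash p$ uniformly and works for all $r\ge 5$; you should either adopt that slack or supply the extra constructions your deleted-point case requires.
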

    Suppose, for some $x$ in $E(M)-H$, that $M/x$ is a projective geometry or a projective geometry with exactly one point deleted. As $M|H\cong M(K_n)$ for some $n\geq 5$, we see that $M|H$ has a flat $F$ such that $M|F\cong M(K_5)$. Viewing $M$ as a restriction of $P_{r(M)}$, there are elements $a$ and $b$ of $\cl_{P_{r(M)}}(F)-F$  such that $\{x,a,a_x\}$ and $\{x,b,b_x\}$ are triangles of $P_{r(M)}$ for some elements $a_x$ and $b_x$ in $E(M)-(H\cup \{x\})$ because $M/x$ is a projective geometry possibly with a point deleted.
    
    Since $M|F\cong M(K_5)$, the set $\cl_{P_{r(M)}}(F)-F$ is a $5$-circuit so it does not contain any triangles. Thus, there is a point $y$ in $F$ such that $\{a,b,y\}$ is a triangle of $P_{r(M)}$. 
If the third point on the line $\cl_{P_{r(M)}}(\{x,y\})$ is in $E(M)$, then $\cl_M(\{x,a_x,b_x\})$ is a flat isomorphic to $M(K_4\backslash e)$, a contradiction. Thus, the third point on $\cl_{P_{r(M)}}(\{x,y\})$ is not in $E(M)$. Let $d$ be a point of $\cl_{P_{r(M)}}(F)-(F\cup\{a,b\})$ such that $\{x,d,d_x\}$ is a triangle in $P_{r(M)}$ for some $d_x$ in $E(M)-H$. Note that such an element must exist since $M/x$ is a projective geometry with at most one point deleted and $|\cl_{P_{r(M)}}(F)-(F\cup\{a,b\})|=3$.
    
   As the complement of $M(K_5)$ in $P_4$ is a 5-circuit, the third point $t$ on the line $\cl_{P_{r(M)}}(\{d,y\})$ must be in $F$, otherwise $\{a,b,d,t\}$ forms a 4-circuit in $\cl_{P_{r(M)}}(F)-F$. Thus, $\cl_{M}(\{x,d_x,y\})$ is a connected rank-3 flat containing four or five elements, a contradiction. 
       We conclude that $M/x$ is the cycle matroid of a complete graph for all $x$ in $E(M)-H$. 
    
    Fix $f$ in $E(M)-H$ and suppose, for some $g$ in $E(M)-f$, that $M/g$ is a projective geometry or a projective geometry with one point deleted. In $M/g$, let $f$ label the point corresponding to the parallel class that contained $f$. Then $M/g/f$ is a projective geometry. Since $r(M)\ge5$, it follows that $M/g/f$ has $F_7$ as a minor, a contradiction since $M/f\cong M(K_{r(M)})$. Therefore, \ref{completegraphsub} holds.

By  \ref{completegraphsub}, every single-element contraction of $M$ is regular. Since $M$ is binary of rank at least five, the Scum Theorem implies that $M$ has neither $F_7$ nor $F^*_7$  as a minor. Thus, by a theorem of Tutte~\cite{tutte0}, $M$ is regular. Let $C^*=E(M)-H$ and suppose $C^*$ is dependent. Since $M$ is binary, every circuit contained in $C^*$ has even cardinality. 
Let $\{x_1,x_2,\dots,x_{s}\}$  be a circuit $C$ contained in $C^*$. Because each of $M|H$ and $M/x_{s}$ is the cycle matroid of a complete graph, the third point $y_i$ on the projective line spanned by $\{x_i,x_{s}\}$ is in $M$. Then $M|(C \cup \{y_1,y_2,\dots,y_{s-1}\})$ is isomorphic to a binary spike of rank $s-1$ with a tip. As the last matroid has $F_7$ as a minor,   we have  a contradiction. We conclude that $C^*$ is independent, and, by Lemma~\ref{eximC4K4eRound}, $M$ is round, so $C^*$ is spanning. Therefore, $|C^*|=r(M)$. Hence, 
$|E(M)| = r(M)+{r(M) \choose 2}={r(M)+1 \choose 2}$. By \cite{murty} (see also \cite[Proposition 14.10.3]{ox1}), as $M$ is a regular matroid with ${r(M)+1 \choose 2}$ elements, $M\cong M(K_{r(M)+1})$, a contradiction. 
\end{proof}

\section{Conings}\label{conings}
In this section, we show that, for an induced-minor-closed class $\mathcal{M}$ of $GF(q)$-representable matroids, the class $\hatM$  of matroids that contains $\mathcal{M}$ together with all matroids that can be built from members of   $\mathcal{M}$ by repeatedly  taking $q$-conings  is also an induced-minor-closed class.  Unless specified otherwise, all matroids considered in this section are $GF(q)$-representable. 
\begin{lemma}\label{coneCont}
    If $\mathcal{M}$ is a class of $GF(q)$-representable matroids closed under taking induced minors, then $\hatM$ is closed under taking contractions. 
\end{lemma}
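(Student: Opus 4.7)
My plan is to induct on the number $k$ of $q$-conings needed to build $M$ from $\mathcal{M}$. The base case $k = 0$ is immediate: if $M \in \mathcal{M}$, then by hypothesis $\mathcal{M}$ is closed under induced minors, so $\si(M/e) \in \mathcal{M} \subseteq \hatM$. For the inductive step, I write $M = A(M')$ with $M'$ buildable by $k-1$ conings, and let $p$ denote the apex of this coning.

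The key technical assertion to establish is that, for every $e \in E(A(M'))$, either $\si(A(M')/e) = M'$ (when $e = p$) or $\si(A(M')/e) \cong A(\si(M'/f))$ for some $f \in E(M')$. Granted this, the inductive hypothesis applied to $M'$ gives $\si(M'/f) \in \hatM$, and then by the definition of $\hatM$ one further coning yields $A(\si(M'/f)) \in \hatM$. To prove the assertion I would fix a $GF(q)$-representation in which the points of $M'$ lie in the coordinate hyperplane $\{(v,0)\}$ of $GF(q)^{r(M')+1}$ and the apex is $p = (0,\dots,0,1)$, so that every element of $A(M')$ is either a base point $(v_i,0)$, the apex $p$, or a new point $(v_i,\lambda)$ with $\lambda \in GF(q)^*$; then I would analyse the quotient by $\langle e\rangle$ according to which type $e$ is.

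The case $e = p$ is easy: each new point $(v_i,\lambda)$ becomes parallel to $v_i$ in the quotient, so $\si(A(M')/p) = M'$. The case $e = (v_f,0) \in E(M')$ is almost as direct: a short check in coordinates shows that no base point becomes parallel to the apex or to a new point, that two base points are parallel in the quotient iff they are parallel in $M'/f$, and that the new points collapse into $q - 1$ per parallel class of $M'/f$, yielding $\si(A(M')/e) \cong A(\si(M'/f))$. The main obstacle is the third case, where $e = (v_f,\lambda)$ is a new point with $\lambda \neq 0$. Here the projection $\phi(v,s) = v - \lambda^{-1} s v_f$ realises the quotient by $\langle e\rangle$; it sends $p$ to a nonzero multiple of $v_f$, so in the quotient $p$ is identified with $v_f$, and $v_f$ plays the role of the new apex. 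For each $i \neq f$, the images $\{\phi(v_i,\mu) : \mu \in GF(q)^*\}$ sweep out all $q-1$ remaining points on the projective line through $v_i$ and $v_f$, so that this line becomes fully populated above the apex $v_f$, and two such filled lines amalgamate exactly when they correspond to the same parallel class of $M'/f$. Assembling these observations gives $\si(A(M')/e) \cong A(\si(M'/f))$, which closes the induction.
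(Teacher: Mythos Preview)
Your proof is correct and follows essentially the same strategy as the paper's: induct on the number of conings (the paper phrases this as a minimal-rank counterexample), handle $e=p$ by noting $A(M')/p\cong M'$, and for $e\neq p$ use the key fact that $\si(A(M')/e)\cong A(\si(M'/f))$ for an appropriate $f\in E(M')$. The only difference is packaging: the paper invokes Whittle's \cite[Lemma~2.3(i),(iii)]{whittle} to get, for any $e\neq p$, a hyperplane $H$ through $e$ with $A(M')|H\cong M'$ and $A(M')/e\cong A((A(M')|H)/e)$, which treats your ``base point'' and ``new point'' subcases uniformly and avoids the explicit coordinate computation.
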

\begin{proof}
Let $M$ be a smallest-rank member of $\hatM$ for which there is an element $e$ such that $M/e \not\in  \hatM$. As $\mathcal{M}$ is closed under taking induced minors, $M \not\in \mathcal{M}$. 
 Thus,  $M=A(N)$ for some $N$ in $\hatM$.  If $e=p$, the tip  of the coning, then $M/e\cong N$, so $M/e  \in \hatM$, a contradiction.  Hence  $e \in E(M)-p$. By \cite[Lemma 2.3(i)]{whittle}, there is a hyperplane $H$ of $M$ that contains $e$ such that $M|H\cong N$. Let $N'=M|H$. Then $N'$ is in $\hatM$ and, therefore, $N'/e$ is in $\hatM$. Thus, by \cite[Lemma 2.3(iii)]{whittle}, we have $M/e\cong A(N'/e)$, 
so $M/e \in \hatM$, a contradiction.
\end{proof}

\begin{lemma}\label{coneIR}
    If $\mathcal{M}$ is a class of $GF(q)$-representable matroids closed under taking induced restrictions, then $\hatM$ is closed under taking induced restrictions.
\end{lemma}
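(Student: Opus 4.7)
The plan is to prove the lemma by strong induction on $r(M)$, mirroring the structure of Lemma~\ref{coneCont}. Let $M \in \hatM$ and let $F$ be a flat of $M$; we must show that $M|F \in \hatM$. If $M \in \mathcal{M}$, then $M|F \in \mathcal{M} \subseteq \hatM$ by the hypothesis on $\mathcal{M}$. Otherwise, $M \cong A(N)$ for some $N \in \hatM$ with tip $p$, and $r(N) = r(M)-1$, so the inductive hypothesis applies to $N$ and to any flat of $N$.

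The argument splits according to whether $p$ lies in $F$. Suppose first that $p \in F$. I would verify that $F_N := F \cap E(N)$ is a flat of $N$: since $r(A(N)) = r(N)+1$, the set $E(N)$ is a flat of $M$, so $N = M|E(N)$ and $\cl_N(F_N) = \cl_M(F_N) \cap E(N) \subseteq F \cap E(N) = F_N$. I would then argue that $F$ is precisely the cone over $F_N$ with tip $p$: any $y \in F \setminus (\{p\} \cup E(N))$ lies on the unique line of $M$ through $p$ and some $x \in E(N)$, and since $p, y \in F$, that entire line lies in $F$, which forces $x \in F_N$. This structural description identifies $M|F$ with $A(N|F_N)$. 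The inductive hypothesis yields $N|F_N \in \hatM$, and therefore $M|F = A(N|F_N) \in \hatM$.

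Now suppose that $p \notin F$. The central claim to establish is a restriction-side companion to \cite[Lemma 2.3(i)]{whittle}: every hyperplane $H$ of $M = A(N)$ satisfying $p \notin H$ is isomorphic to $N$ as a restriction, i.e.\ $M|H \cong N$. Granting this, since $p \notin \cl_M(F) = F$, a standard matroid argument produces a hyperplane $H$ of $M$ with $F \subseteq H$ and $p \notin H$. Under the isomorphism $M|H \cong N$, the flat $F$ of $M$ corresponds to a flat $F'$ of $N$, and the inductive hypothesis applied to $N$ gives $N|F' \in \hatM$; hence $M|F \cong N|F' \in \hatM$.

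The main obstacle is the claim highlighted in the previous paragraph. I expect this to follow from a short linear-algebra computation in the $GF(q)$-representation of the coning: realising $A(N)$ inside $V \oplus GF(q)e$ with $p = [e]$, any hyperplane of $A(N)$ avoiding $p$ is cut out by a linear form of the shape $(v,t) \mapsto \ell_V(v) + t$ for some $\ell_V \in V^*$. Such a hyperplane meets each of the $q+1$-point lines through $p$ in exactly one non-$p$ point, and the projection $(v,t) \mapsto v$ restricts to a bijection from $E(M)\cap H$ onto $E(N)$; the fact that this bijection is a matroid isomorphism then comes directly from comparing linear dependencies in $V \oplus GF(q)e$ with linear dependencies in $V$. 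Alternatively, the same fact may be extractable from Whittle's analysis in \cite{whittle} that underlies his Lemma 2.3.
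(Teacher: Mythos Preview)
Your proof is correct and follows essentially the same approach as the paper: minimal-rank counterexample (equivalently, induction on rank), split on whether the tip $p$ lies in $F$, and in the case $p\in F$ identify $M|F$ with $A(N|(F\cap E(N)))$, while in the case $p\notin F$ pass to a hyperplane $H$ of $A(N)$ with $F\subseteq H$, $p\notin H$, and $M|H\cong N$. The only substantive difference is that the ``central claim'' you propose to verify by linear algebra is exactly \cite[Lemma~2.3(ii)]{whittle}, which the paper simply cites; so your proof is slightly less economical in that it re-derives a fact already available in the literature.
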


\begin{proof}
Let $M$ be a smallest-rank member of $\hatM$ having a flat $F$ such that $M|F \not\in \hatM$.
 Then $M \not\in \mathcal{M}$, so 
 $M=A(N)$ for some $N$ in $\mathcal{M}$. Suppose $F$ contains the tip $p$. Then $F\cap E(N)$ is a flat of $N$ and, as $N \in 
 \mathcal{M}$ and $r(N) < r(M)$,   it follows that $N|(F\cap E(N)) \in \mathcal{M}$. Therefore, as $M|F=A(N|(F\cap E(N)))$, we see that $M|F$ is in $\hatM$, a contradiction. Thus, $p \not\in F$. Hence, by \cite[Lemma 2.3 (ii)]{whittle}, $M$ has a hyperplane $H$ avoiding $p$ such that $M|H\cong N$ and $F\subseteq H$. Let $N'=M|H$. Then $N' \in \hatM$. As $M|F=N'|F$, we see,  since $r(N') < r(M)$, that $M|F \in \hatM$, a contradiction.
\end{proof}

\begin{proof}[Proof of Theorem~\ref{coneIM}]
    This is an immediate consequence of combining Lemmas~\ref{coneCont}~and~\ref{coneIR}.
\end{proof}

Given a class  $\mathcal{M}$ of $GF(q)$-representable matroids, let $\veeM$ be the class of matroids obtained by starting with matroids in $\mathcal{M}$ and applying a sequence of tipless $q$-conings; let $\tM$ be the class of matroids obtained by starting with matroids in $\mathcal{M}$ and applying a sequence of operations each of which is a tipped $q$-coning   or a tipless $q$-coning.   As an example, 
we have the following result whose straightforward proof is omitted.

\begin{proposition}
\label{tiplesscity}
The  class of $GF(q)$-representable matroids that can be obtained by starting with the empty matroid and applying a sequence of tipless $q$-conings consists of  the class of affine geometries over $GF(q)$. 
\end{proposition}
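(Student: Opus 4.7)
My plan is to reduce the entire proposition to a single identity:
\[
A(AG(k,q)) \backslash p \;\cong\; AG(k+1,q) \qquad (k \geq 0),
\]
where $p$ denotes the apex of the coning. Once this is in hand, both inclusions in the proposition follow by induction on the number of tipless conings applied. In the forward direction, starting from the empty matroid, the first tipless coning produces the single-point matroid $AG(0,q)$, and each subsequent iteration advances $AG(k,q)$ to $AG(k+1,q)$. In the reverse direction, each $AG(k,q)$ is obtained by applying the identity $k+1$ times.

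To prove the identity, I will work with a standard representation. Realize $AG(k,q)$ by the $q^k$ column vectors of the form $(1,a_1,\ldots,a_k)^{\top}$ in $GF(q)^{k+1}$, indexed by $(a_1,\ldots,a_k) \in GF(q)^k$. To compute the coning, adjoin a new last coordinate, take the apex to be $p = (0,\ldots,0,1)^{\top} \in GF(q)^{k+2}$, and embed each original point as $(1,a_1,\ldots,a_k,0)^{\top}$. The projective line through $p$ and such an embedded point consists of $p$ together with the $q$ vectors $(1,a_1,\ldots,a_k,b)^{\top}$ for $b \in GF(q)$. Filling in every such line realizes $A(AG(k,q))$ as the column set consisting of $p$ together with all $(1,a_1,\ldots,a_k,b)^{\top}$ with $(a_1,\ldots,a_k,b) \in GF(q)^{k+1}$. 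Deleting $p$ leaves precisely the standard affine representation of $AG(k+1,q)$.

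I expect no serious obstacles beyond the coordinate-level verification above; the inductive step in each direction is immediate from the identity. The only real care needed is in the base case, where one must reconcile the natural convention identifying the empty matroid and the single-point matroid with degenerate affine geometries so that the induction is properly seeded.
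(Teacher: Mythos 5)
The paper omits its own proof of this proposition as ``straightforward,'' so there is no argument to compare against; judged on its own, your central identity $A(AG(k,q))\backslash p \cong AG(k+1,q)$ is correct, and the coordinate verification is exactly the right computation. (For $q>2$ you should add one sentence invoking the unique $GF(q)$-representability of affine geometries: since, as the paper notes, distinct representations of a matroid can yield non-isomorphic conings, unique representability is what guarantees that \emph{every} tipless coning of $AG(k,q)$ is isomorphic to $AG(k+1,q)$, which the forward inclusion requires.)

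The genuine gap is the base case, and it is not something an unspecified ``convention'' can absorb. Under the paper's definition, $A(\emptyset)$ is obtained by adding a coloop $p$ to the empty matroid and then filling in the lines from $p$ to the (nonexistent) points of $\emptyset$; hence $A(\emptyset)=U_{1,1}=\{p\}$ and $A(\emptyset)\backslash p=\emptyset$. So the first tipless coning of the empty matroid returns the empty matroid --- it does not produce the single-point matroid $AG(0,q)$ as you assert --- and a sequence of tipless conings starting from $\emptyset$ never leaves $\emptyset$. Your induction is therefore never seeded. This traces to an imprecision in the proposition itself: in the language of Theorem~\ref{targetsthm} and Corollary~\ref{01stringtarget}, the affine geometries are exactly the targets with strings of the form $10^k$, that is, one \emph{tipped} coning of the empty matroid (yielding $AG(0,q)=U_{1,1}$) followed by a sequence of tipless conings. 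Your write-up should say this explicitly --- either start the induction at the one-element matroid or replace the first tipless coning by a tipped one --- rather than appealing to an ``identification'' of the empty and one-point matroids, which are not isomorphic.
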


If $N$ is an affine geometry over $GF(q)$ with $r(N) \ge 3$ and $e \in E(N)$, then $N/e$ is a projective geometry over $GF(q)$ of rank at least two.  
Hence, for a class $\mathcal{M}$ of $GF(q)$-representable matroids, the class $\veeM$ need not be closed under taking contractions. 

The following results are essentially consequences of \cite[Lemma 2.3]{whittle}.  We omit their proofs because they are so similar to the proofs of Lemmas~\ref{coneIR}~and~\ref{coneCont}.

\begin{lemma}\label{tiportiplessIR}
    If $\mathcal{M}$ is a class of $GF(q)$-representable matroids closed under taking induced restrictions, then $\veeM$ is closed under taking induced restrictions.
\end{lemma}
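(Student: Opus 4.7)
The plan is to mirror the proof of Lemma~\ref{coneIR}, with the minor additional bookkeeping needed because the apex is deleted. Let $M$ be a smallest-rank member of $\veeM$ having a flat $F$ with $M|F \notin \veeM$. Since $\mathcal{M} \subseteq \veeM$ and $\mathcal{M}$ is closed under induced restrictions, $M \notin \mathcal{M}$, so $M = A(N) \backslash p$ for some $N \in \veeM$ with $r(N) < r(M)$, where $p$ is the apex of the coning.

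The key preliminary observation is that, since $\cl_{A(N)\backslash p}(F) = \cl_{A(N)}(F) - \{p\}$, a subset $F$ of $E(A(N)) - \{p\}$ is a flat of $M$ precisely when $\cl_{A(N)}(F)$ equals either $F$ or $F \cup \{p\}$. This gives two cases mirroring those in Lemma~\ref{coneIR}. In the first, $F \cup \{p\}$ is a flat of $A(N)$: by \cite[Lemma 2.3(i)]{whittle}, $F \cap E(N)$ is a flat of $N$ and $A(N)|(F \cup \{p\}) = A(N|(F \cap E(N)))$, so by the minimality of $M$ we obtain $N|(F \cap E(N)) \in \veeM$, and hence $M|F = A(N|(F \cap E(N))) \backslash p \in \veeM$, a contradiction.

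In the second case, $F$ is itself a flat of $A(N)$ with $p \notin F$, and \cite[Lemma 2.3(ii)]{whittle} supplies a hyperplane $H$ of $A(N)$ with $F \subseteq H$, $p \notin H$, and $A(N)|H \cong N$. Setting $N' = A(N)|H$, we have $N' \in \veeM$ with $r(N') < r(M)$, so by minimality $N'|F \in \veeM$; since $M|F = A(N)|F = N'|F$, this is again a contradiction. The main (and only) real obstacle is the correct identification of flats of $A(N) \backslash p$ in terms of flats of $A(N)$, which fixes the case split; once this is in hand, the argument is a routine transcription of the proof of Lemma~\ref{coneIR}.
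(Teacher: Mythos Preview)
Your proof is correct and follows exactly the approach the paper indicates: the paper omits the proof of this lemma, stating only that it is ``so similar to the proofs of Lemmas~\ref{coneCont} and~\ref{coneIR}'' that it follows from \cite[Lemma~2.3]{whittle}, and your argument is precisely the natural adaptation of the proof of Lemma~\ref{coneIR} to the tipless setting. The only minor remark is that the citation of \cite[Lemma~2.3(i)]{whittle} in your first case is not quite the part needed---the relevant fact there is the structural one (used without citation in the paper's proof of Lemma~\ref{coneIR}) that a flat of $A(N)$ containing the tip is itself the coning of its intersection with $E(N)$---but the mathematics is sound.
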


\begin{theorem}\label{tiportiplessIM}
    If $\mathcal{M}$ is a class of $GF(q)$-representable matroids closed under taking induced minors, then $\tM$ is closed under taking induced minors.
\end{theorem}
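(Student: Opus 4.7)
The plan is to mirror Theorem~\ref{coneIM}: establish separately that $\tM$ is closed under single-element contractions and under induced restrictions, then combine the two statements. Since induced minors are obtained by a sequence of such operations, this suffices. I would organize each closure proof as a smallest-rank-counterexample argument, just as in Lemmas~\ref{coneCont} and~\ref{coneIR}. If $M$ is a smallest-rank member of $\tM$ admitting an induced minor outside $\tM$, then $M \notin \mathcal{M}$ (because $\mathcal{M}$ is induced-minor-closed), so there is some $N \in \tM$ with $r(N) = r(M)-1$ such that either $M = A(N)$ (tipped case) or $M = A(N) \backslash p$ (tipless case).

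In the tipped case the proofs of Lemmas~\ref{coneCont} and~\ref{coneIR} apply essentially verbatim. For a contraction, either $e = p$ (so $M/e \cong N \in \tM$), or \cite[Lemma 2.3(i),(iii)]{whittle} produces $N' \cong N$ in $\tM$ with $M/e \cong A(N'/e)$; since $r(N') < r(M)$, the minimality of $M$ gives $N'/e \in \tM$, and a tipped coning then places $M/e$ in $\tM$. For an induced restriction to a flat $F$, split on whether $p \in F$: if yes, $M|F = A(N|(F \cap E(N)))$; if no, use \cite[Lemma 2.3(ii)]{whittle} to find a hyperplane $H$ of $M$ with $p \notin H$, $F \subseteq H$, and $M|H \cong N$, then invoke minimality on $M|H$.

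The tipless case is the main obstacle, because with $\bar M = A(N)$ one has $M = \bar M \backslash p$ and $r(\bar M) = r(M)$, so $\bar M$ itself cannot be fed to the induction. The fix is to apply Whittle's Lemma 2.3 to $\bar M$ purely structurally, and to invoke minimality only on matroids of strictly smaller rank. For a contraction, $M/e = (\bar M / e) \backslash p \cong A(N'/e) \backslash p$ for some $N' \cong N \in \tM$ of rank $r(M)-1$; minimality yields $N'/e \in \tM$, so $A(N'/e) \in \tM$ by a tipped coning and $A(N'/e) \backslash p = M/e \in \tM$ by a subsequent tipless coning. For an induced restriction, the fact that $F$ is $M$-closed forces $\cl_{\bar M}(F) \subseteq F \cup \{p\}$, so either $F$ is a flat of $\bar M$ avoiding $p$, or $F \cup \{p\}$ is a flat of $\bar M$. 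In the first case $M|F = \bar M|F$ is handled by the $p \notin F$ subcase of the tipped argument, which reduces to restricting a matroid isomorphic to $N$. In the second case the $p \in F$ subcase of the tipped argument gives $\bar M|(F \cup \{p\}) = A(N|(F \cap E(N)))$, and deleting $p$ exhibits $M|F$ as a tipless coning of $N|(F \cap E(N))$, which lies in $\tM$ by minimality.

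Combining the two closure statements yields Theorem~\ref{tiportiplessIM}. The only nonroutine point is the one isolated above: in the tipless case one must resist the temptation to recurse on $\bar M$ and instead let Whittle's Lemma 2.3 do the rank-dropping to $N'$ (or to $N|(F \cap E(N))$), where the inductive hypothesis is then available.
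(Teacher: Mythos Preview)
Your proposal is correct and matches what the paper intends: the paper omits the proof entirely, stating only that it follows from \cite[Lemma~2.3]{whittle} by arguments ``so similar to the proofs of Lemmas~\ref{coneCont} and~\ref{coneIR},'' and your outline is precisely that extension, with the tipless case handled by observing that $M/e \cong A(N'/e)\backslash p$ and $M|F \cong A(N|(F\cap E(N)))\backslash p$ are again tipless conings of smaller-rank members of $\tM$. One wording nit: in the tipless contraction step, $A(N'/e)\backslash p$ is directly a tipless coning of $N'/e$, not a ``subsequent'' operation applied to $A(N'/e)$; the intermediate mention of $A(N'/e)\in\tM$ is unnecessary.
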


Recall that we are writing $P_r$ as an abbreviation of $PG(r-1,q)$. For a subset $G$ of $E(P_r)$, let $R= E(P_r) - G$. We view the elements of $G$ and $R$ as being colored green and red, respectively. We call $P_r|G$ a 
\textit{projective target} if there is a sequence $(F_0, F_1, \dots , F_r)$ of projective flats, that is, flats of $P_r$,  with $F_0\subseteq F_1 \subseteq \dots \subseteq F_{r-1}\subseteq F_r$ and $r(F_j) = j$ for all $j$ such that, for all $i$ in $[r]$, the set $F_i - F_{i-1}$ is contained in either $G$ or $R$. 
Such matroids were studied by Nelson and Nomoto \cite{nelson} in the binary case and by Mizell and Oxley \cite{mizell} for $GF(q)$-representable matroids when $q\ge2$. 
Although $GF(q)$-representable matroids need not be uniquely $GF(q)$-representable, it was shown by Mizell and Oxley~\cite[Proposition 6]{mizell} that if one $GF(q)$-representation of a simple $GF(q)$-representable matroid is a target, then all of the $GF(q)$-representations of $M$ are targets. 
Let $M$ be the projective target associated with the sequence $(F_0, F_1, \dots, F_r)$ of projective flats. Then, for any basis $\{x_1,x_2,\dots,x_r\}$ for $P_r$ such that $\{x_1,x_2,\dots,x_i\}$ spans $F_i$ for each $i$, all the elements of
$F_{i+1}-F_i$ have the same color as $x_{i+1}$.

Given a simple $GF(q)$-representable matroid $M$ of rank at most $r$, it is convenient to view $M$ as a restriction of $P_r$. For this, we will take $(G,R)$ to be a partition of $P_r$  such that $P_r|G\cong M$. We can obtain a tipped or tipless coning of $M$ from this 2-coloring $(G,R)$ of $P_r$ by first viewing this $P_r$ as a hyperplane $H$ of $P_{r+1}$ and then taking a point $p$ of $E(P_{r+1}) - H$. Next, for each point $z$ of $H$, we color each point $z'$ of $E(P_{r+1}) - (H \cup p)$ that is on the line between $p$ and $z$ so that the colors of $z$ and $z'$ agree. Finally.
 for a tipped coning, we color $p$ green, and, for a tipless coning, we color $p$ red. It was noted in \cite{oxwhittle} that, in general, two $q$-conings of a matroid need not be isomorphic. Nevertheless, we have the following result.

\begin{theorem}\label{targetsthm}
    The class of $GF(q)$-projective targets is exactly the class of matroids that can be obtained from the empty matroid by a sequence of $q$-conings and tipless $q$-conings.
\end{theorem}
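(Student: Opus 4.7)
The plan is to prove both inclusions of Theorem~\ref{targetsthm} by induction, using the explicit $2$-colored description of the coning construction stated just above. For the easier direction, that every matroid obtained from the empty matroid by a sequence of tipped or tipless $q$-conings is a projective target, induct on the number of coning operations. The empty matroid is trivially a target. Given a target $M$ in $P_r$ witnessed by the chain $(F_0,F_1,\ldots,F_r)$, view $P_r$ as a hyperplane $H$ of $P_{r+1}$ with tip $p \in E(P_{r+1}) - H$, and build a new chain by setting $F_0' = \emptyset$, $F_1' = \{p\}$, and $F_i' = \cl_{P_{r+1}}(F_{i-1} \cup \{p\})$ for $2 \le i \le r+1$. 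The layer $\{p\}$ is monochromatic by fiat (green or red depending on the coning), and for $i \ge 2$ the layer $F_i' - F_{i-1}'$ decomposes into $F_{i-1} - F_{i-2}$ (inside $H$) together with the off-$H$ points on lines joining $p$ to elements of $F_{i-1} - F_{i-2}$; the coning's coloring rule forces the latter points to share the color of the former, so the whole layer is monochromatic.

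For the converse, induct on $r$. The case $r = 0$ forces $M$ to be the empty matroid. For $r \ge 1$, given a target $M$ in $P_r$ with chain $(F_0, F_1, \ldots, F_r)$, let $p$ be the unique point of $F_1$, let $c$ be its color, and pick any hyperplane $H$ of $P_r$ with $p \notin H$. Since $p \notin H$, each $F_i \cap H$ is a flat of $H$ of rank $i - 1$, and the chain $(\emptyset, F_2 \cap H, F_3 \cap H, \ldots, F_r \cap H)$ is maximal in $H \cong P_{r-1}$; its successive differences lie inside the corresponding differences of the original chain and are therefore monochromatic. This exhibits $M|H$ as a projective target in $H$, and the inductive hypothesis produces $M|H$ from the empty matroid by conings. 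Appending a single $c$-coning (tipped if $c$ is green, tipless if $c$ is red) should then recover $M$.

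The hardest step is this final identification: matching the coloring the target assigns to $P_r - H$ with the coloring prescribed by the coning. The key observation is that, for a point $z' \in P_r - (H \cup \{p\})$, the unique point $z \in H$ on the line through $p$ and $z'$ satisfies $z \in (F_i \cap H) - (F_{i-1} \cap H)$ if and only if $z' \in F_i - F_{i-1}$; this forces $z'$ and $z$ to share the same layer-color $c_i$, which is precisely what the coning rule demands. Once this color-matching is in place, $M$ is exhibited as the $c$-coning of $M|H$, closing both inductions.
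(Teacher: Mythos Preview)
Your proof is correct. Both inductions go through as written, and the color-matching step you flag as hardest is indeed the crux: since $p \in F_1 \subseteq F_{i-1}$ for every $i \ge 2$, the line through $p$ and any $z' \notin H \cup \{p\}$ lies entirely inside each $F_i$ that contains $z'$, so $z'$ and its projection $z \in H$ land in the same layer $F_i - F_{i-1}$ and hence share a color. That forces the $c$-coning of $M|H$ to reproduce $M$ exactly.

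Your argument and the paper's encode the same correspondence---the last coning tip becomes the singleton flat $F_1$ at the bottom of the target chain---but the packaging differs. The paper fixes a basis $\{x_1,\dots,x_r\}$ adapted to the chain, reverses it to $y_i = x_{r-i+1}$, and observes that the color of any point is governed by the lowest-indexed $y_j$ appearing in its linear expansion, which is precisely the coning rule when the tips are $y_1,\dots,y_r$ in that order; the converse runs the same reversal backwards. Your version instead inducts geometrically, pushing the new tip to the bottom of the chain in one direction and peeling off $F_1=\{p\}$ via an arbitrary complementary hyperplane in the other. Your approach avoids the index-reversal bookkeeping and isolates the color-matching cleanly; the paper's coordinate argument has the advantage of displaying the whole bijection between maximal chains and coning sequences at once, without an inductive unwinding.
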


\begin{proof}
    Suppose $M$ is a  projective target of rank at most $r$. As projective targets are uniquely $GF(q)$-representable, we lose no generality in considering a $2$-coloring $(G,R)$ of $E(P_r)$ such that 
 $P_r|G\cong E(M)$. Let $\{x_1,x_2,\dots,x_r\}$ be a basis of $P_r$ such that  $\{x_1,x_2,\dots,x_i\}$ spans a projective flat $X_i$, and the color of $x_{i+1}$ coincides with the color of each of the points in $X_{i+1}-X_{i}$ for each $i$ in  $[r-1]$. Every element $z$ of $M$ can be uniquely written as a linear combination of $x_1,x_2,\dots,x_r$, and the  color  of $z$ matches that of $x_j$ where $j$ is the  highest index  of a basis vector used in this linear combination. For each $i$ in $[r]$, let  $y_i=x_{r-i+1}$. If $y_1$ is green, then $P_r|\{y_1\}$ corresponds to a tipped coning of the empty matroid. If  $y_1$ is red, then $P_r|\{y_1\}$ corresponds to a tipless coning of the empty matroid.  
For each $i \ge 2$, the color of each point $y$ of the flat  $Y_i$ that is spanned by $\{y_1,y_2,\dots,y_i\}$ coincides with the color of $y_j$ where $j$ is the lowest index of a member of $\{y_1,y_2,\dots,y_i\}$ that is used in the  linear combination yielding $y$. Thus, inductively, we see that each flat $Y_i$ may be built by starting with a 2-coloring of the projective flat $Y_{i-1}$ and adding $y_i$ as a coloop and then adding all of the projective points between $y_i$ and the points of $Y_{i-1}$ such that each added point has the same color as its corresponding point in $Y_{i-1}$. We conclude that each projective target of rank at most $r$ can be obtained from the empty matroid by a sequence or $r$ operations each of which is a tipped  or a tipless coning. 

On the other hand, suppose we have a matroid that can be obtained from the empty matroid by a sequence of $r$ operations, each a tipped or tipless coning. 
Let the tips, in order, be $y_1,y_2,\dots,y_r$, where these are colored  green or red if the coning is tipped or tipless, respectively. For each $i$ in $[r]$, let  $x_i=y_{r-i+1}$. Now let $X_i$ be the projective flat that is spanned by $\{x_1,x_2,\dots,x_i\}$. Then $X_1$, which equals $\{y_r\}$, is a tipped or tipless coning of the empty matroid depending on whether $y_r$ is green or red. Moreover, it is not difficult to check that, for all $i$ in $[r]$,  the color of each element of $X_i - X_{i-1}$ coincides with the color of $x_i$. We conclude that  $P_r|G$ is a 
$GF(q)$ projective target. 
\end{proof}

The empty projective target can be associated with the 0-1 string whose sole entry is $0$.

\begin{corollary}\label{01stringtarget}
A non-empty projective target  can be uniquely represented, up to isomorphism, by a $0$-$1$ string whose leftmost entry is a $1$.
\end{corollary}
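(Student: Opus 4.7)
The plan is to combine Theorem~\ref{targetsthm} with the uniqueness of the base-$q$ expansion of a non-negative integer. By Theorem~\ref{targetsthm}, any non-empty projective target $M$ arises from the empty matroid via a sequence of $q$-conings; recording each tipped coning as a $1$ and each tipless coning as a $0$, in the order in which the operations are performed, yields a candidate $0$-$1$ string. I would take the ambient projective geometry to have rank $r := r(M)$, so that this string has length exactly $r$.

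For the leading-$1$ condition, I would argue as follows. In the notation of the proof of Theorem~\ref{targetsthm}, the first coning $y_1 = x_r$ corresponds to the top layer $X_r - X_{r-1}$ of the monochromatic chain. Since $r(M) = r$, the matroid $M$ is not contained in the hyperplane $X_{r-1}$, so some green point of $M$ lies in $X_r - X_{r-1}$. Monochromaticity then forces the whole top layer to be green, so $y_1$ must be tipped, giving $s_1 = 1$.

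For uniqueness, I would use the count $|X_i - X_{i-1}| = q^{i-1}$ to write
\begin{equation*}
|E(M)| \;=\; \sum_{i=1}^{r} c_i\, q^{i-1} \;=\; \sum_{j=1}^{r} s_j\, q^{r-j},
\end{equation*}
where $c_i$ is the color of the $i$-th monochromatic layer and $s_j = c_{r-j+1}$ is the color of the $j$-th coning. The digits $s_1, s_2, \dots, s_r$ are then precisely the base-$q$ digits of $|E(M)|$, read from most significant to least, and the requirement $s_1 = 1$ rules out leading zeros. Since the base-$q$ representation of an integer is unique, the string $s$ is determined entirely by the two isomorphism invariants $r(M)$ and $|E(M)|$.

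The main obstacle I anticipate is conceptual rather than computational: a projective target can admit several distinct maximal monochromatic chains, and a priori these could produce different coning strings. The counting identity above settles this in one line, since every such chain yields the same value of $|E(M)|$ and hence the same base-$q$ digits. Once that identification is made, the corollary reduces to the standard uniqueness of the base-$q$ expansion.
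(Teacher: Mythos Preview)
Your proof is correct and in fact more complete than the paper's. The paper's two-sentence argument simply appeals to Theorem~\ref{targetsthm} and describes how to read a leading-$1$ string as a sequence of conings; it does not explicitly address the issue you single out as the main obstacle, namely that a single target can sit above several distinct monochromatic flags and hence might a priori yield several different coning strings. Your base-$q$ counting argument resolves this cleanly: since $|F_i - F_{i-1}| = q^{i-1}$ and the colors $s_j$ lie in $\{0,1\}\subseteq\{0,\ldots,q-1\}$, the identity $|E(M)| = \sum_{j} s_j\,q^{\,r-j}$ forces $s$ to be the base-$q$ expansion of $|E(M)|$, which is unique. This route has the side benefit of making the next corollary in the paper (one binary target per non-negative integer) an immediate consequence rather than a separate induction. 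One small point worth tightening: you fix the ambient rank to be $r(M)$ in order to produce a length-$r(M)$ string, but for uniqueness you should also observe that \emph{every} leading-$1$ string representing $M$ necessarily has length $r(M)$, since the first (tipped) coning of the empty matroid yields a rank-one matroid and every subsequent coning of a non-empty matroid raises the rank by exactly one.
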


\begin{proof}
By Theorem~\ref{targetsthm}, non-empty projective targets are exactly the matroids that can be obtained by starting with the empty matroid and repeatedly coning with or without a tip so that at least one coning has a tip. To construct the matroid corresponding to a particular 0-1 string that begins with a 1, we read the string from left to right
interpreting each 1 as a tipped coning and each 0 as a tipless coning. 
\end{proof}

The next result seems unlikely to be new, but we include it for completeness.

\begin{corollary}
    For each non-negative integer $n$, there is,   up to isomorphism,  a unique binary projective target on $n$ elements.
\end{corollary}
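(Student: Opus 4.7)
The plan is to combine Corollary~\ref{01stringtarget} with a simple counting argument. By that corollary, non-empty binary projective targets are in bijection, up to isomorphism, with $0$-$1$ strings whose leftmost entry is a $1$. So the task reduces to showing that, for each positive integer $n$, there is exactly one such string whose associated projective target has $n$ elements (and the $n=0$ case is immediate, as the empty matroid is the only projective target on no elements).

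The key observation is that the element count of a binary projective target equals the integer whose binary representation is the associated string. To see this, I would first verify that, for a binary matroid $N$, the tipped coning $A(N)$ has $2|E(N)|+1$ elements, since in $PG(r,2)$ each line through the tip $p$ and a point of $N$ contains a unique third point, and the tipless coning $A(N)\ba p$ has $2|E(N)|$ elements. Thus, if reading an initial segment of the string has produced a projective target with $m$ elements, then appending a $1$ produces one with $2m+1$ elements, and appending a $0$ produces one with $2m$ elements. Starting from the empty matroid (with $m=0$) and reading the string from left to right, this is precisely the standard recursion that builds the integer encoded in binary by the string.

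Since every positive integer $n$ has a unique binary representation and that representation begins with a $1$, there is exactly one $0$-$1$ string starting with $1$ whose associated binary projective target has $n$ elements. Combined with Corollary~\ref{01stringtarget}, this gives the required uniqueness (and existence) up to isomorphism. There is no serious obstacle here: once the element counts for tipped and tipless conings are verified, the rest is the familiar bijection between positive integers and binary strings beginning with $1$.
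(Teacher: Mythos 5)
Your proposal is correct and follows essentially the same route as the paper: both reduce to Corollary~\ref{01stringtarget} and then observe that, in the binary case, a tipped coning sends an $m$-element target to a $(2m+1)$-element one and a tipless coning to a $2m$-element one, so reading the string left to right recovers exactly the binary expansion of the element count. The paper packages this as an induction on the length of the string, but the content is identical.
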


\begin{proof}
 As noted above, when $n=0$, we associate the string $0$ with the empty matroid. Now suppose $n\geq 1$. Then $n$ has a binary expansion as a 0-1 string whose leftmost entry is a $1$. By Corollary~\ref{01stringtarget}, there is a unique non-empty binary projective target with this 0-1 string. If $n=1$, then the 0-1 string is $1$, which corresponds to a tipped coning of the empty matroid, so the resulting matroid has exactly one element. Suppose the result holds for 0-1 strings of length less than $k$ that have 1 as their leftmost entry. Take a 0-1 string $S$ of length $k$ having 1 as its leftmost entry. Let $N$ be the unique binary projective target corresponding to the 0-1 string $S'$ obtained by deleting the rightmost entry of $S$. 
Then, by coning $N$, we get a matroid having $2|E(N)|$ elements if the coning is tipless, or having $2|E(N)|+1$ elements if the coning is tipped. These two possibilities correspond to the two choices for $S$, which are obtained from $S'$  by adjoining a 0 or a 1, respectively, as the rightmost entry. 
\end{proof}

\section{Avoiding $M(K_4)$ as an induced minor}\label{noK4}
In this section, we show that the class $\exim(M(K_4))$ is closed under taking generalized parallel connections across projective geometries, coning with a tip, and, for triangle-free matroids, coning without a tip. Unless otherwise stated, all matroids in this section are assumed to be binary. Let $\mathcal{N}$ be the class of binary matroids with that do not contain $M(K_4)$ as an induced minor.

\begin{lemma}\label{K4GPC}
    The class $\mathcal{N}$ is closed under taking generalized parallel connections of members of $\mathcal{N}$ across projective geometries.
\end{lemma}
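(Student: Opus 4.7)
The plan is to proceed by contradiction: assume $M(K_4)$ is an induced minor of $M=P_N(M_1,M_2)$ and deduce that $M(K_4)$ is already an induced minor of $M_1$ or of $M_2$, contradicting $M_1,M_2\in\mathcal{N}$. By the interval-in-lattice-of-flats characterization of induced minors recalled in the introduction, there exist flats $A\subseteq B$ of $M$ with $\si((M|B)/A)\cong M(K_4)$. The key structural input is that, because $N$ is a modular flat of $M_1$, for every flat $F$ of $M$ the set $F_i:=F\cap E(M_i)$ is a flat of $M_i$ for $i=1,2$, the set $F_N:=F\cap E(N)$ is a flat of $N$, and both restriction and contraction commute with the GPC construction:
\[
M|F = P_{N|F_N}\bigl(M_1|F_1,\,M_2|F_2\bigr),\qquad M/F = P_{N/F_N}\bigl(M_1/F_1,\,M_2/F_2\bigr).
\]
Applying the first identity with $F=B$ and then the second with $F=A$, I would arrive at $(M|B)/A=P_{N'}(M_1',M_2')$, where $N'=(N|B_N)/A_N$ is still a projective geometry (restrictions and contractions of projective geometries by flats are projective geometries), and each $M_i'=(M_i|B_i)/A_i$ is an induced minor of $M_i$ and so lies in $\mathcal{N}$. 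A direct rank computation shows that no element of $E(M_1')\setminus E(N')$ is parallel in this GPC to any element of $E(M_2')\setminus E(N')$, so simplification commutes with $P_{N'}$, and I may assume both $M_i'$ are simple.

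Next I would run a short case analysis on $r(N')$, driven by the rank identity $r(M_1')+r(M_2')-r(N')=3$ and the inequality $r(M_i')\ge r(N')$, which together force $r(N')\in\{0,1,2,3\}$. If $r(N')=0$, the GPC is a direct sum and the connectedness of $M(K_4)$ forces one factor to be empty while the other simplifies to $M(K_4)$. If $r(N')=1$, writing $N'=\{p\}$, either one factor equals $\{p\}$ alone (and the other simplifies to $M(K_4)$) or both factors have rank $2$, in which case each has at most three elements (being simple binary of rank $2$) and the parallel connection has at most five elements, too few to simplify to the six-element $M(K_4)$. If $r(N')=2$, the rank identity forces one factor, say $M_2'$, to have rank $2$; since $M_2'$ is simple, contains $N'$ as a flat, and has the same rank as $N'$, we obtain $M_2'=N'$, so $(M|B)/A$ reduces to $M_1'$, which then simplifies to $M(K_4)$. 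If $r(N')=3$, the analogous argument forces $M_1'=M_2'=N'$, so $(M|B)/A=N'\cong PG(2,2)$, which is not $M(K_4)$. Every case yields a contradiction.

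The main obstacle I anticipate is verifying that the two displayed identities really do hold, and in particular that the modularity of $N$ in $M_1$ descends to modularity of $N|F_N$ in $M_1|F_1$ and of $N/F_N$ in $M_1/F_1$, so that the decomposition $(M|B)/A=P_{N'}(M_1',M_2')$ is a genuine generalized parallel connection across a projective geometry. Once those structural identities are established, the remainder is just the rank-and-element-count analysis above, driven by the $3$-connectedness of $M(K_4)$.
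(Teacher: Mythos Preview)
Your proof is correct and rests on the same structural facts about generalized parallel connections as the paper's---namely that restriction to a flat and contraction of a flat both commute with the GPC decomposition, and that the common part remains a projective geometry. The paper organizes the argument differently: it takes a smallest-rank counterexample $M=P_N(M_1,M_2)$, uses the single-element contraction identities $M/e=P_N(M_1/e,M_2)$ (for $e\notin E(N)$) and $M/e=P_{N/e}(M_1/e,M_2/e)$ (for $e\in E(N)$) together with minimality to conclude that every $M/e$ lies in $\mathcal{N}$, and then deduces that $M(K_4)$ must already occur as a \emph{flat} $F$ of $M$; a short argument (implicitly containing the element-count and connectedness considerations you make explicit in your $r(N')\in\{0,1\}$ cases) shows that $F$ lies entirely in $E(M_1)$ or $E(M_2)$. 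Your approach bypasses the minimal-counterexample reduction by decomposing the full interval $[A,B]$ in one step and then running the rank case analysis on $r(N')$. The paper's route is shorter but leaves the ``why does the flat sit on one side'' step terse; your route makes that step fully explicit and needs no induction, at the cost of invoking the restriction and contraction identities for arbitrary flats rather than single elements (these are standard, e.g.\ via Brylawski's properties of GPC, and your flagged concern about modularity descending is indeed handled by the fact that every flat of a projective geometry is modular in any ambient $GF(q)$-representable matroid).
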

\begin{proof}  
    Let $M$ be a smallest-rank non-member of $\mathcal{N}$ such that there are matroids $M_1$ and $M_2$ in $\mathcal{N}$ and a projective geometry $N$ with $M=P_N(M_1,M_2)$. Suppose, for some element $e$ of $E(M)$, the matroid $M/e$ has $M(K_4)$ as an induced minor. Then, by symmetry, we may assume that $e \in E(M_1)-E(N)$ or $e \in E(N)$. In the first case, $M/e=P_N((M_1/e),M_2)$, so $M/e$ is a generalized parallel connection of two matroids in $\mathcal{N}$ across a projective geometry. Because $M/e$ has smaller rank than $M$, we conclude that $M/e$ does not have $M(K_4)$ as an induced minor, a contradiction. Now suppose that $e \in E(N)$. Then $M/e=P_{N/e}((M_1/e),(M_2/e))$. Since $N/e$ is a projective geometry and both $M_1/e$ and $M_2/e$ are in $\mathcal{N}$, we deduce that $M/e \in \mathcal{N}$, a contradiction. Hence $M/e \in \mathcal{N}$ for all $e$ in $E(M)$. As $M \not\in \mathcal{N}$, we see that 
 $M$ must have a proper flat $F$ such that $M|F$ is isomorphic to $M(K_4)$. Now $F\cap E(M_1)$ is a flat of $M_1$ and $F\cap E(M_2)$ is a flat of $M_2$. Without loss of generality, we may assume that $E(M_1)$ contains a basis of $M|F$.  Hence, $F$ is contained in $E(M_1)$. Therefore, $M_1$ has $M(K_4)$ as an induced minor, a contradiction as $M_1\in \mathcal{N}$.
\end{proof}

The next two results identify some structure of matroids that have a coning point, that is, a point $f$ such that $f$ is in a triangle with every other element of the matroid.
\begin{lemma}\label{P4-2}
If $N$ has an element $x$ in a triangle with every element and $N$ has a flat $F$ such that $N|F$ is isomorphic to $M(K_4)$, then $N|\cl_N(F\cup \{x\})$ is isomorphic to a rank-$4$ projective geometry with two points deleted. 
\end{lemma}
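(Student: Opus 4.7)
The plan is to view $N|\cl_N(F\cup\{x\})$ as a restriction of the rank-$4$ binary projective geometry $PG(3,2)$, which has $15$ points, and show that exactly two of these points are absent from $E(N)$.

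First I would verify that $x\notin F$. If $x\in F$, then $x$ is in a triangle with each of the other five elements of $F$; but every element of $M(K_4)$ lies in exactly two triangles and so is in a triangle with only four of the remaining five elements, a contradiction. Hence $r(F\cup\{x\})=4$, and we may identify $\cl_N(F\cup\{x\})$ with a subset of $PG(3,2)$. The flat $F$ spans a hyperplane $H\cong PG(2,2)\cong F_7$ of this $PG(3,2)$. Since $F$ is a flat of $N$ with $|F|=6$ while $|H|=7$, there is a single point $y\in H-F$, and the flatness of $F$ forces $y\notin E(N)$.

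Next I would use the coning hypothesis on $x$ to force most points of $PG(3,2)-H$ into $E(N)$. For each $f\in F$ there is a triangle $\{x,f,g_f\}$ of $N$; in the binary representation this third point is $g_f=x+f$. Because $x\notin H$ and $f\in H$, the sum $x+f$ lies in $PG(3,2)-H$, and distinct choices of $f$ give distinct third points. Together with $x$ itself, this yields $7$ distinct elements of $E(N)\cap(PG(3,2)-H)$, namely $\{x\}\cup\{x+f:f\in F\}$.

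Finally I would rule out the remaining point $x+y$ of $PG(3,2)-H$ by the same coning argument applied in reverse. If $x+y$ were in $E(N)$, then by hypothesis $x$ would be in a triangle with $x+y$; the third point of that triangle in $PG(3,2)$ is $x+(x+y)=y$, forcing $y\in E(N)$ and contradicting the previous paragraph. Therefore the two points missing from $\cl_N(F\cup\{x\})$ are precisely $y$ and $x+y$, so $N|\cl_N(F\cup\{x\})\cong PG(3,2)$ with two points deleted. The only subtle step is the last one, where the correct move is to observe that the triangle through $x$ and the hypothetical point $x+y$ would reintroduce the forbidden point $y$; everything else is bookkeeping in the ambient $PG(3,2)$.
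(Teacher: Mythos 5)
Your proof is correct and follows essentially the same approach as the paper's (which is much terser, simply asserting that the result follows once $x\notin F$ is established); you have merely filled in the bookkeeping in $PG(3,2)$ that the paper leaves implicit, including the nice observation that $x+y$ is excluded because its triangle with $x$ would force $y$ back into $E(N)$.
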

\begin{proof}
    The element $x$ cannot be in $F$ since every element of $F$ is contained in a rank-2 flat of size 2 but every rank-2 flat of $N$ that contains $x$ is a triangle. Thus, $r(\cl_N(F \cup \{x\})) = 4$ and the result follows since 
every element of $F$ is in a triangle with $x$. 
\end{proof}

\begin{lemma}\label{ExtX}
    Let $M$ be a rank-$r$ matroid in $\mathcal{N}$ and suppose that, for some $x$ in $E(M)$, the matroid $N$ is obtained by adding every element on the line $\cl_{P_r}(\{x,y\})$ for each $y$ in $E(M)-x$. Then $N$ is in $\mathcal{N}$.
\end{lemma}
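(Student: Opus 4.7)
The plan is to argue by contradiction. Suppose there are flats $F_0\subseteq F_1$ of $N$ with $\si((N|F_1)/F_0)\cong M(K_4)$. Two preliminary observations drive the argument. First, by construction, $x$ is a \emph{coning point} of $N$: for every $y\in E(N)-x$ the set $\{x,y,x+y\}$ (third projective point) is a triangle of $N$, since either $y\in E(M)$ and $x+y$ was adjoined, or $y$ itself is a new point $x+y'$ so $x+y=y'\in E(M)$. Second, $M(K_4)$ has no coning point, because every edge of $K_4$ has a matching partner with which it forms a $2$-element rank-$2$ flat rather than a triangle.

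The technical core (``coning-point inheritance'') is the following: if $x\in F_1-F_0$, then the image $[x]$ of $x$ in $K:=\si((N|F_1)/F_0)$ is a coning point of $K$. For any $[\hat y]\neq [x]$ with representative $\hat y\in F_1-F_0$, the triangle $\{x,\hat y,x+\hat y\}$ of $N$ descends to a $3$-element rank-$2$ set in $(N|F_1)/F_0$. Each possible coincidence $[x]=[x+\hat y]$ or $[\hat y]=[x+\hat y]$ is easily shown to force $[\hat y]=[x]$ (the first gives $\hat y=x+(x+\hat y)\in\cl_N(F_0\cup\{x\})$; the second gives $x=\hat y+(x+\hat y)\in\cl_N(F_0\cup\{\hat y\})$), a contradiction. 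Hence $\{[x],[\hat y],[x+\hat y]\}$ is a genuine triangle in $K$.

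With these tools in hand, split into three cases. \emph{Case (A): $x\in F_0$.} Both $F_i':=F_i\cap E(M)$ are flats of $M$, and because $x\in F_i$, each new point $z\in F_i$ is paired with $x+z\in F_i'$, giving $\cl_{P_r}(F_i)=\cl_{P_r}(F_i')$ and $r_M(F_i')=r_N(F_i)$. The map $y\mapsto y^M$ (equal to $y$ if $y\in E(M)$, else to $x+y\in E(M)$) gives a rank-preserving bijection between parallel classes of $(N|F_1)/F_0$ and those of $(M|F_1')/F_0'$, so $\si((N|F_1)/F_0)\cong\si((M|F_1')/F_0')\cong M(K_4)$ and $M$ has $M(K_4)$ as an induced minor, contradicting $M\in\mathcal{N}$. \emph{Case (B1): $x\in F_1-F_0$.} By inheritance, $[x]$ is a coning point of $K=M(K_4)$, which is impossible. \emph{Case (B2): $x\notin F_1$.} Set $F_i^*:=\cl_N(F_i\cup\{x\})$, so that $x\in F_0^*\subseteq F_1^*$ and $r(F_i^*)=r(F_i)+1$. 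In $L:=\si((N|F_1^*)/F_0)$ the image $F$ of $F_1-F_0$ is a rank-$3$ flat with $L|F\cong M(K_4)$, and inheritance makes $[x]$ a coning point of $L$, so Lemma~\ref{P4-2} forces $L\cong P_4\setminus\{g,g'\}$ with $g'=[x]+g$. Contracting $[x]$ in $L$ collapses the line $\{[x],g,g'\}$ entirely and leaves one element from each of the other six lines through $[x]$, so $\si((N|F_1^*)/F_0^*)\cong M(K_4)$. Applying Case~(A) to $(F_0^*,F_1^*)$ then makes $M(K_4)$ an induced minor of $M$, the final contradiction.

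The main obstacle is verifying Case~(A) rigorously: one needs $F_i':=F_i\cap E(M)$ to be a flat of $M$, the rank identity $r_{P_r}(F_i)=r_{P_r}(F_i')$ (since each new point $z=x+y\in F_i$ has $y\in F_i'$ when $x\in F_i$), and the compatibility of the map $y\mapsto y^M$ with parallel classes and ranks in the two quotients. Once Case~(A) is in place, the inheritance lemma and Lemma~\ref{P4-2} reduce (B1) and (B2) to (A) with little further work.
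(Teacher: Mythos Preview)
Your proof is correct and rests on the same ideas as the paper's: $x$ is a coning point of $N$, this property descends to quotients, Lemma~\ref{P4-2} applies, and one reduces to $M$ via the identification $\si(N/x)\cong\si(M/x)$. The paper's version is more economical because it uses the contract-then-restrict form of an induced minor: writing the offending $M(K_4)$ as $(\si(N/X))|F$, either $x\in X$ (whence $\si(N/X)$ is already an induced minor of $\si(N/x)\cong\si(M/x)$) or $x$ survives as a coning point of $\si(N/X)$, and a single application of Lemma~\ref{P4-2} followed by contracting $x$ finishes. This collapses your three cases into two; your Case~(A) is precisely the identity $\si(N/x)\cong\si(M/x)$ unpacked in coordinates, and your Case~(B2) rebuilds the contract-first reduction by hand by enlarging the flats to contain $x$.
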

\begin{proof}
    Suppose that, for some subset $X$ of $E(N)$, the matroid $N/X$ has a flat $F$ such that $(N/X)|F \cong M(K_4)$. Since  $M/x$ does not have $M(K_4)$ as an induced minor, and 
    $N/x \cong M/x$, we may assume $x \in E(N/X)$. Then, by Lemma~\ref{P4-2}, $N/X$ has a flat $F'$ containing $x$ such that $(N/X)|F'$ is isomorphic to a rank-4 projective geometry with two points deleted.  Then, by contracting $x$, we obtain $M(K_4)$ as an induced minor of $M$, a contradiction. 
\end{proof}

Recall that, when $N$ is a simple binary matroid, $A(N)$ is the coning of $N$ with tip $p$, and $A(N)\backslash p$ is the tipless coning of $N$.  

\begin{lemma}\label{tipInClass}
    If $N$ is in $\mathcal{N}$, then $A(N)$ is in $\mathcal{N}$.
\end{lemma}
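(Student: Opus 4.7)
My plan is to derive this from Lemma~\ref{ExtX}. The key observation is that $A(N)$ can be constructed in two stages: first adjoin $p$ to $N$ as a coloop to form $N' = N \oplus p$, a matroid of rank $r+1$ where $r = r(N)$; then, viewing $N'$ naturally as a restriction of $P_{r+1}$ (with $N$ sitting in a hyperplane $H$ and $p$ outside $H$), adjoin, for each $y \in E(N)$, the third point on the projective line $\cl_{P_{r+1}}(\{p,y\})$. By the definition of $q$-coning given in the introduction, the result is precisely $A(N)$. Thus, provided $N' \in \mathcal{N}$, Lemma~\ref{ExtX} applied with $M = N'$ and $x = p$ delivers $A(N) \in \mathcal{N}$.

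The whole proof therefore reduces to verifying that adjoining a coloop keeps us inside $\mathcal{N}$, i.e., that $N \oplus p \in \mathcal{N}$. I would prove this by considering an arbitrary induced minor $N'/Y|F$ of $N' = N \oplus p$ and showing it cannot be isomorphic to $M(K_4)$. If $p \in Y$, then $N'/Y \cong N/(Y-p)$, and any induced restriction of this would be an induced minor of $N$, contradicting $N \in \mathcal{N}$. If $p \notin Y$, then $N'/Y \cong (N/Y) \oplus p$, where $p$ remains a coloop. Since flats of a direct sum are direct sums of flats of the summands, $F$ either lies entirely within $E(N/Y)$—giving an induced minor of $N$ isomorphic to $M(K_4)$, again impossible—or $F$ contains $p$, in which case $N'/Y|F$ has $p$ as a coloop. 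The latter matroid is disconnected, so it cannot be isomorphic to the connected matroid $M(K_4)$.

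The main (and only) substantive step is this coloop-closure verification; once it is in place, the theorem follows by a single citation of Lemma~\ref{ExtX}. I do not foresee any genuine obstacle, since the definition of $A(N)$ aligns perfectly with the line-extension operation treated in Lemma~\ref{ExtX}, and the coloop argument is a routine direct-sum analysis.
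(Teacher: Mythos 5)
Your proof is correct, and it takes a genuinely different route from the paper's. The paper proves Lemma~\ref{tipInClass} from scratch: it takes a smallest-rank counterexample, disposes of contractions by the isomorphisms $A(N)/p\cong N$ and $A(N)/e\cong A(N/e)$ (via Whittle's hyperplane lemma) together with the minimality of the rank, and then disposes of an $M(K_4)$ flat of $A(N)$ itself by applying Lemma~\ref{P4-2} to the tip $p$. You instead observe that $A(N)$ is exactly the output of the operation in Lemma~\ref{ExtX} applied to $N\oplus p$ at the coloop $p$ (for binary matroids, ``every point on each line between $p$ and a point of $N$'' is just the third point of each such line, and since $p\not\in\cl(E(N))$ none of these points is already present), so the whole lemma reduces to the closure of $\mathcal{N}$ under adding a coloop. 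Your verification of that closure is sound: contracting $p$ reduces to $N$, and otherwise $p$ stays a coloop, flats of a direct sum are direct sums of flats, and a flat containing $p$ has $p$ as a coloop while $M(K_4)$ has none. What your route buys is economy --- it exposes Lemma~\ref{tipInClass} as an essentially formal consequence of Lemma~\ref{ExtX}, avoiding both the rank induction and the appeal to Whittle's structural results on conings; what it costs is that the real content is merely relocated, since the proof of Lemma~\ref{ExtX} in the paper is itself structurally parallel to the paper's direct proof of Lemma~\ref{tipInClass} (both pivot on Lemma~\ref{P4-2}). One stylistic nitpick: when $F=\{p\}$ the restriction is $U_{1,1}$, which is conventionally connected, so it is cleaner to rule out that case by saying $M(K_4)$ has no coloops rather than by disconnectedness --- a point you in fact already make.
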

\begin{proof}
    
    Let $N$ be a smallest-rank member of $\mathcal{N}$ such that $A(N)$ is not in $\mathcal{N}$. Suppose, for some $e$ in $E(A(N))$, the matroid $A(N)/e$ has $M(K_4)$ as an induced minor. 
As $A(N)/p \cong N$, we see that $e \neq p$.    
 Then, by \cite[Lemma 2.3 (i)]{whittle}, $e$ is   in a hyperplane $H$ of $M$ such that $M|H\cong N$. Hence we may assume that $e\in E(N)$. Then $A(N)/e \cong A(N/e)$. By the choice of $N$, the latter  is in $\mathcal{N}$. Hence so is the former, a contradiction.

    We conclude that $A(N)$ contains a flat $F$ such that $A(N)|F \cong M(K_4)$. As $A(N)$ has an element, namely $p$, that is in a triangle with every other element of $E(A(N))$, it follows, by Lemma~\ref{P4-2}, that $A(N)$ has a rank-4 flat $F'$ containing $p$ that is isomorphic to a rank-4 projective geometry with two points deleted. Hence $(A(N)|F')/p$ is isomorphic to $M(K_4)$. Since $p$ was contracted to produce this induced minor, we conclude that $(A(N)|F')/p$ is isomorphic to an induced minor of $N$, a contradiction.
\end{proof}

\begin{lemma}\label{tiplessInClass}
    If $N$ is a triangle-free member of $\mathcal{N}$, then $A(N)\backslash p \in \mathcal{N}$.
\end{lemma}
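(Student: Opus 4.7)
My plan is to prove the lemma by establishing two facts about $M := A(N)\backslash p$: (i) that $M$ is triangle-free, and (ii) that $M/e \in \mathcal{N}$ for every $e \in E(M)$. Combining these, any would-be induced minor of $M$ isomorphic to $M(K_4)$, witnessed by a subset $X \subseteq E(M)$ and a flat $F$ of $M/X$ with $(M/X)|F \cong M(K_4)$, leads to a contradiction: the case $X = \emptyset$ would give an $M(K_4)$-flat of $M$ (ruled out by (i)), while $X \ne \emptyset$ would, for any $x \in X$, yield $M/x$ with $M(K_4)$ as an induced minor (ruled out by (ii)).

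For (i), I would fix a $GF(2)$-representation in which $N$ occupies the hyperplane $\{v : v_0 = 0\}$, the tip $p$ is $(1,0,\ldots,0)$, and each copy is $y' = y + p$. A three-element dependent subset of $E(M)$ splits into four cases according to the number of copies it contains: those with an odd number of copies are impossible because the first coordinate of the sum would equal $1$, while those with zero or two copies correspond (after replacing each copy $z'$ by its original $z$) to a triangle of $N$. Since $N$ is triangle-free, $M$ has no triangles, and in particular no $M(K_4)$-flat.

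For (ii), by symmetry I may take $e = y \in E(N)$, and write $M/y = (A(N)/y) \backslash p$. Whittle's Lemma~2.3, as already used in the proof of Lemma~\ref{coneCont}, identifies $A(N)/y$, after simplification, with $A(N/y)$: contracting $y$ in $A(N)$ creates the single new parallel class $\{p, y'\}$ (arising from the triangle $\{p,y,y'\}$ of $A(N)$, and the only such class because $N$ is triangle-free), and this class plays the role of the tip of the new coning. Deleting $p$ removes only one of its two members, so $y'$ survives as the representative of the tip and $M/y \cong A(N/y)$ as simple matroids. Since $N/y \in \mathcal{N}$ (by closure of $\mathcal{N}$ under induced minors), Lemma~\ref{tipInClass} gives $A(N/y) \in \mathcal{N}$, so $M/y \in \mathcal{N}$. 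The case $e = y' \in E(N)'$ is symmetric, with parallel class $\{p, y\}$.

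The main subtlety — and, I expect, the one intended trap in the argument — lies in the identification in (ii): one must observe that $M/y$ is the full tipped coning $A(N/y)$ rather than its tipless version $A(N/y) \backslash \text{tip}$. It is tempting to think that since $p$ has already been removed from $M$ the tip of $A(N/y)$ should also be missing, but in fact the tip of $A(N/y)$ only emerges during simplification as the merged class $\{p, y'\}$, and deletion of $p$ merely renames the survivor. Once this identification is correctly in hand, the lemma reduces to a short combination of Lemma~\ref{tipInClass}, Whittle's lemma, and the triangle-freeness observation.
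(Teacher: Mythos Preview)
Your proof is correct and follows essentially the same route as the paper's: both arguments show that $A(N)\backslash p$ is triangle-free (hence has no $M(K_4)$-flat) and that every single-element contraction is isomorphic to a tipped coning $A(N'/e)$ with $N'\cong N$, which lies in $\mathcal{N}$ by Lemma~\ref{tipInClass}. The paper states the triangle-freeness and the identification $(A(N)\backslash p)/e\cong A(N/e)$ more tersely (and wraps the argument in an unnecessary minimal-counterexample frame), while you spell out the coordinate check and the parallel-class analysis explicitly; the underlying ideas are the same.
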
    
\begin{proof}
    Suppose $N$ is a smallest-rank triangle-free member of $\mathcal{N}$ such that $A(N)\backslash p$ is not in $\mathcal{N}$. By \cite[Lemma 2.3 (i)]{whittle}, each element $e$ of $E(A(N))$ is contained in a hyperplane $H$ isomorphic to $N$. Hence $(A(N)\backslash p)/e$ is isomorphic to $A(N/e)$, and, by Lemma~\ref{tipInClass}, $A(N/e)$ is in $\mathcal{N}$. We deduce that $A(N)\backslash p$ has $M(K_4)$ as an induced restriction. However, as $N$ has no triangles, $A(N)\backslash p$ has no triangles. Hence $A(N)\backslash p$ does not have $M(K_4)$ as an induced restriction. 
\end{proof}

On combining Lemmas~\ref{K4GPC},~\ref{tipInClass},~and~\ref{tiplessInClass}, we immediately obtain the following result. 
\begin{corollary}
\label{cor1}
    The class $\mathcal{N}$ of matroids that do not  have $M(K_4)$ as an induced minor is closed under the following operations.
    \begin{enumerate}[label=(\roman*)]
        \item Generalized parallel connections across projective geometries;
        \item tipped coning; and
        \item tipless coning of triangle-free matroids.
    \end{enumerate}
\end{corollary}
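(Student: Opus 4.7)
The plan is straightforward: the corollary is a direct compilation of the three closure results that have just been established in the section, so the proof is essentially bookkeeping. Each of the three items in the corollary's list matches exactly one of the preceding lemmas, so all that is required is to verify the correspondence and assemble the statement.

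First, I would match item (i) with Lemma~\ref{K4GPC}, which asserts precisely that $\mathcal{N}$ is closed under taking generalized parallel connections (across projective geometries) of members of $\mathcal{N}$. Next, I would match item (ii) with Lemma~\ref{tipInClass}, which asserts that if $N \in \mathcal{N}$, then the tipped coning $A(N)$ is also in $\mathcal{N}$. Finally, item (iii) is exactly the content of Lemma~\ref{tiplessInClass}, which asserts that if $N$ is a triangle-free member of $\mathcal{N}$, then the tipless coning $A(N)\backslash p$ is in $\mathcal{N}$. Taken together, these three lemmas cover the three operations listed in the statement.

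There is no real obstacle here, since all of the difficulty has already been absorbed into the three lemmas. In particular, Lemmas~\ref{K4GPC} and \ref{tipInClass} did the nontrivial work of choosing a minimum-rank counterexample and exploiting Lemma~\ref{P4-2} (which forced any induced $M(K_4)$ in a coning with a universal triangle-point to extend to a rank-$4$ flat through that point); Lemma~\ref{tiplessInClass} then used the absence of triangles to rule out any induced $M(K_4)$ as a restriction. Consequently, the proof of Corollary~\ref{cor1} reduces to a single sentence: combine Lemmas~\ref{K4GPC},~\ref{tipInClass},~and~\ref{tiplessInClass}. No additional casework, induction, or extremal argument is needed beyond invoking the three lemmas in turn.
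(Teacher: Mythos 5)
Your proposal is correct and matches the paper exactly: the paper states that the corollary is obtained immediately by combining Lemmas~\ref{K4GPC}, \ref{tipInClass}, and~\ref{tiplessInClass}, which is precisely the one-sentence assembly you describe.
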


The next lemma \cite[Corollary 3.7]{ox2} will be used in the proofs of the two subsequent lemmas. 

\begin{lemma}
\label{ox1987}
Let $M$ be a $3$-connected binary matroid having rank and corank at least three and suppose that $\{x,y,z\} \subseteq E(M)$. Then $M$ has a minor isomorphic to $M(K_4)$ whose ground set contains $\{x,y,z\}$.
\end{lemma}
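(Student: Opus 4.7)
My plan is to prove the result by induction on $|E(M)|$, reducing to a 3-connected binary proper minor via the removal of a single element outside $\{x,y,z\}$.

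For the base cases, when $|E(M)| \in \{6, 7\}$, the hypotheses $r(M), r^*(M) \geq 3$ combined with 3-connectedness and binary representability force $M$ to be one of $M(K_4)$, $F_7$, or $F_7^*$. In each case the conclusion is immediate: $M(K_4)$ is itself the required minor, while $F_7 \setminus e \cong M(K_4) \cong F_7^*/e$ for every $e$, and we may pick $e \in E(M) - \{x,y,z\}$ since $|E(M)| - 3 \geq 1$.

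For the inductive step with $|E(M)| \geq 8$, the base-case analysis shows we may assume $r(M) \geq 4$ and $r^*(M) \geq 4$. The idea is to find an element $e \in E(M) - \{x,y,z\}$ such that the simplification of $M \setminus e$ or the cosimplification of $M/e$ is a 3-connected binary matroid with rank and corank still at least three, then apply the induction hypothesis and pull the resulting $M(K_4)$-minor back to $M$. The existence of such an $e$ follows from Tutte's Wheels-and-Whirls Theorem (in the binary case, only wheels arise as exceptions), which guarantees a removable element whenever $M$ is not a wheel, together with a counting refinement that provides at least four removable elements in the present parameter range, leaving at least one outside $\{x,y,z\}$.

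The main obstacle is twofold. First, at the boundaries $r(M) = 4$ or $r^*(M) = 4$, only contractions (respectively only deletions) preserve the rank-and-corank bounds, so I must verify that a removable element of the appropriate type lies outside $\{x,y,z\}$; a direct count of contraction-removable and deletion-removable elements in this range resolves this. Second, when $M \cong M(\mathcal{W}_n)$ with $n \geq 4$, the wheel exception of Tutte's theorem restricts the set of removable elements, so I would handle this case directly: because $\mathcal{W}_n$ has $n \geq 4$ spokes, we may contract a spoke outside $\{x,y,z\}$, and when simplification collapses a parallel pair of rim edges we may choose to retain any element of $\{x,y,z\}$ in that pair, reducing to $M(\mathcal{W}_{n-1})$ with $\{x,y,z\}$ preserved. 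Iterating this reduction reaches $M(\mathcal{W}_3) = M(K_4)$ containing $\{x,y,z\}$, closing the induction.
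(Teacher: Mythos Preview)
The paper does not supply a proof of this lemma; it is quoted as \cite[Corollary~3.7]{ox2} and used as a black box. There is therefore no in-paper argument to compare against, and I assess your proposal on its own terms.

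The base cases are correct, and the wheel case can be made to work, though you skip a necessary check: you must verify that for $n\ge 4$ one can always choose a spoke $s_i\notin\{x,y,z\}$ whose two adjacent rim edges are not both in $\{x,y,z\}$ (otherwise simplification forces you to discard one of $x,y,z$). A short case analysis on how $\{x,y,z\}$ splits between spokes and rim edges confirms this, but it is not automatic from what you wrote.

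The substantive gap is in the non-wheel inductive step. You invoke ``a counting refinement'' of Tutte's Wheels-and-Whirls Theorem yielding at least four elements $e$ with $M\backslash e$ or $M/e$ genuinely $3$-connected, so that at least one avoids $\{x,y,z\}$. Tutte's theorem guarantees only one such element, and the bound ``at least four'' is neither proved here nor a standard citable fact. Results on essential elements and fans (due to Oxley, Wu, and others) do yield quantitative information, but extracting the precise bound you need under the hypotheses $r,r^*\ge 4$ and $M$ binary is genuine work that you have not supplied. Without it, nothing prevents the only Tutte-removable elements from being exactly $x$, $y$, $z$, and the induction stalls. Separately, your ``boundary'' paragraph about $r(M)=4$ or $r^*(M)=4$ is misconceived: since you remove a single element and already have $r,r^*\ge 4$, both deletion and contraction leave rank and corank at least $3$, so no obstruction of the kind you describe arises.

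To repair the argument you would need either a precise citation or proof for the four-element bound, or an alternative reduction---for instance via Bixby's Lemma applied to some $e\notin\{x,y,z\}$, together with control over which pairs from $\{x,y,z\}$ can become parallel or serial after removing $e$. That route is plausible but demands its own case analysis, which is absent here.
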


\begin{lemma}\label{notriad}
    No $3$-connected member of $\mathcal{N}$ has a triad.
\end{lemma}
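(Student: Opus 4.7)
The plan is to assume for contradiction that $M$ is a $3$-connected member of $\mathcal{N}$ having a triad $\{a,b,c\}$ and to produce $M(K_4)$ as an induced minor of $M$. First I would note that $r(M), r^*(M) \ge 3$: a simple binary matroid of rank at most $2$ has no triad, and dually, $r^*(M) \le 2$ would force $M = U_{1,3}$, which is not simple.

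Next, I would apply Lemma~\ref{ox1987} to obtain a minor $N \cong M(K_4)$ of $M$ whose ground set contains $\{a,b,c\}$; write $N = M/C \backslash D$ with $(C \cup D) \cap \{a,b,c\} = \emptyset$. A short orthogonality argument then shows that no element of $\{a,b,c\}$ lies in $\cl_M(C)$: were $a \in \cl_M(C)$, some circuit of $M$ contained in $C \cup \{a\}$ would meet the cocircuit $\{a,b,c\}$ in exactly one element, violating binary orthogonality. Hence $\{a,b,c\}$ remains a cocircuit of the simplification $\si(M/C) = M/\cl_M(C)$, and cosimplicity of $M(K_4)$ then forces $\{a,b,c\}$ to be a triad of $N$ itself.

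The crux is to show that $E(N)$ is actually a flat of $\si(M/C)$. Setting $F = \cl_{\si(M/C)}(E(N))$, the matroid $\si(M/C)|F$ is a simple rank-$3$ binary matroid containing $\si(M/C)|E(N) \cong M(K_4)$, hence is either $M(K_4)$ or $F_7$. Both are cosimple, so a cocircuit of $\si(M/C)$ whose intersection with $F$ is a proper nonempty subset of $\{a,b,c\}$ would give $\si(M/C)|F$ a cocircuit of size $1$ or $2$, which is impossible. Consequently $\{a,b,c\}$ itself is a cocircuit of $\si(M/C)|F$; since $F_7$ has no $3$-cocircuits, this forces $\si(M/C)|F \cong M(K_4)$ and therefore $F = E(N)$.

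With $E(N)$ established as a flat of $\si(M/C)$, the set $E(N) \cup \cl_M(C)$ is a flat of $M$ containing $\cl_M(C)$, and contracting $\cl_M(C)$ inside the restriction to this flat realizes $N \cong M(K_4)$ as an induced minor of $M$, contradicting $M \in \mathcal{N}$. The main technical difficulty I anticipate is tracking the cocircuit $\{a,b,c\}$ through the interleaved operations of contraction, simplification, and restriction to a flat; the key enabler is that every intermediate matroid into which $\{a,b,c\}$ is pushed is cosimple, so the cocircuit cannot fracture into smaller pieces.
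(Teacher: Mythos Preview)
Your proof is correct and takes essentially the same approach as the paper: both invoke Lemma~\ref{ox1987} to get an $M(K_4)$-minor containing the triad and then argue that this minor is in fact an induced minor. The paper compresses the second step into one sentence via the $3$-separation $(T^*,E(M)-T^*)$, whereas you spell out the details explicitly (tracking the cocircuit through contraction and simplification, and ruling out a seventh point in the closure because $F_7$ has no $3$-cocircuits); one minor slip is the equation $\si(M/C)=M/\cl_M(C)$, which is false in general since the right-hand side may still contain parallel pairs, but this does not affect your argument.
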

\begin{proof}
    Let $T^*$ be a triad of a member $M$ of $\mathcal{N}$. Then, by Lemma~\ref{ox1987},    $T^*$ is  in an $M(K_4)$-minor of $M$. As 
    $(T^*,E(M)-T^*)$ is a 3-separation of $M$, such a minor is obtained by contracting elements from $E(M)-T^*$, so this $M(K_4)$-minor is an induced minor of $M$, a contradiction.
\end{proof}

Recall that, for a rank-$r$ binary matroid $M$ that is viewed as a restriction of $P_r$, if $X\subseteq E(P_r)-E(M)$, we denote by $M+X$ the matroid $P_r|(E(M)\cup X)$. 

\begin{lemma}\label{fillin3sep}
Let $(X,Y)$ be a vertical $3$-separation in a $3$-connected matroid $M$. Let $G_P = \cl_{P_r}(X) \cap \cl_{P_r}(Y)$ and
$G_M = \cl_{M}(X) \cap \cl_{M}(Y)$. If $|G_M| \ge 1$, then $M$ has both
$(M|\cl_M(X))+G_P$ and $(M|\cl_M(Y))+G_P$ as induced minors.
\end{lemma}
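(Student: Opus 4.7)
The plan is to view $M$ as a restriction of $P_r$ and to realize $(M|\cl_M(X)) + G_P$ as an induced minor by contracting a carefully chosen subset $Z \subseteq Y$. Writing $V_X = \mathrm{span}(X)$ and $V_Y = \mathrm{span}(Y)$ for the $\mathbb{F}_2$-subspaces of $\mathbb{F}_2^r$ spanned by $X$ and $Y$, modularity of the projective geometry gives $\dim(V_X \cap V_Y) = r(X) + r(Y) - r(M) \le 2$. Hence $G_P = (V_X \cap V_Y) \setminus \{0\}$ has at most three projective points, while the hypothesis $|G_M| \ge 1$ ensures $G_M$ is non-empty.

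First I would select $Z \subseteq Y$ so that $\mathrm{span}(Z)$ is a vector-space complement of $V_X \cap V_Y$ inside $V_Y$; such a $Z$ exists because $Y$ spans $V_Y$ and hence its image in $V_Y/(V_X \cap V_Y)$ spans that quotient. With this choice, $\mathrm{span}(Z) \cap V_X = \{0\}$, so contracting $Z$ preserves $\cl_M(X)$ elementwise, and each $y \in Y \setminus Z$ is identified in the quotient with a unique point $\pi(y) \in V_X \cap V_Y$ via the projection $\pi : V_Y \twoheadrightarrow V_X \cap V_Y$ with kernel $\mathrm{span}(Z)$. Because $\mathrm{span}(Z) + V_X = V_Y + V_X = \mathbb{F}_2^r$, the flat $\cl_{M/Z}(X)$ is all of $M/Z$, so the whole of $M/Z$ serves as the flat I take. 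After simplification, elements $y \in Y \setminus Z$ with $\pi(y) \in G_M$ become parallel to existing elements of $\cl_M(X)$ and are dropped, while those with $\pi(y) \in G_P \setminus G_M$ represent the missing points of $G_P$.

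The hard part will be ensuring the additional requirement $\pi(Y \setminus Z) \supseteq G_P \setminus G_M$, so that the simplified ground set equals $\cl_M(X) \cup G_P$ exactly. When $r(G_P) \le 1$, the assumption $|G_M| \ge 1$ forces $G_M = G_P$ and this is vacuous. When $r(G_P) = 2$, $G_P$ has three projective points, and as $Z$ varies (among complements consisting of elements of $Y$) the set $G_P \setminus \pi(Y \setminus Z)$ of missed points ranges over $G_P$; the hypothesis $|G_M| \ge 1$ then lets me pick $Z$ whose missed points all lie in $G_M$. In the tightest case $r(Y) = 3 = |Y|$, for instance, the three singleton candidates for $Z$ produce precisely the three $2$-element subsets of $G_P$ as $\pi(Y \setminus Z)$, each omitting a different point of $G_P$, and I select the $Z$ whose omitted point lies in $G_M$.

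With such $Z$ in hand, the simplification of $M/Z$ equals $(M|\cl_M(X)) + G_P$, realising this matroid as an induced minor of $M$. The induced minor $(M|\cl_M(Y)) + G_P$ then follows by the symmetric argument with $X$ and $Y$ interchanged.
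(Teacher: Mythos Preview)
Your approach diverges from the paper's and, as written, has a real gap at the step you yourself flag as ``the hard part.''

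You assert that by varying $Z$ among complements drawn from $Y$ you can always arrange $\pi(Y\setminus Z)\supseteq G_P\setminus G_M$, but you justify this only when $r(Y)=|Y|=3$. Take $r(Y)=4$ with $V_X\cap V_Y=\langle a,b\rangle$, $G_M=\{a\}$, and $Y=\{e_1,\,e_2,\,a+e_1,\,b+e_2\}$ for a basis $a,b,e_1,e_2$ of $V_Y$. All four admissible two-element choices of $Z$ yield $\pi(Y\setminus Z)=\{a,b\}$, so the point $c=a+b\in G_P\setminus G_M$ is never hit. In this particular configuration the pair $\{e_2,b+e_2\}$ is $2$-separating, so $3$-connectedness of $M$ would exclude it; but nothing in your argument invokes $3$-connectedness here --- your justification that ``$Y$ spans $V_Y$'' is purely linear-algebraic and does not rule out such $Y$. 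Even in your ``tightest case,'' the claim that all three singletons are admissible complements tacitly assumes no $y_i$ lies in $G_M\cap Y$, which again needs $3$-connectedness. In short, the proposal is missing exactly the ingredient that makes the lemma true: a substantive use of $3$-connectedness to guarantee that the missing guts points can actually be produced from the $Y$-side.

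The paper proceeds quite differently. It notes that $M_X=(M|\cl_M(X))+G_P$ is itself $3$-connected and applies Lemma~\ref{ox1987} to find an $M(K_4)$-minor of $M_X$ whose ground set contains $G_P=\{a,b,c\}$; hence $M_X$ has an induced minor $N\in\{M(K_4),F_7\}$ containing $\{a,b,c\}$ as a triangle. One then contracts in $M$ the elements of $\cl_M(X)$ used to reach $N$, plus one further element of $N$, so that elements parallel to $b$ and to $c$ appear while $a\in G_M$ is already present; combined with $M|\cl_M(Y)$ this realises $M_Y$ as an induced minor, and symmetry gives $M_X$. So the paper contracts from the $X$-side rather than the $Y$-side, and it replaces your search for a good linear complement with a structural lemma about $3$-connected binary matroids.
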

\begin{proof}
    Let $G_P=\{a,b,c\}$. Then $G_M \subseteq G_P$ and we may assume that $a \in G_M$. 
  Let $M_X = (M|\cl_M(X))+ G_P$ and $M_Y = (M|\cl_M(Y))+ G_P$. Then it is well known and straightforward to check that each of $M_X$ and $M_Y$ are $3$-connected. Thus, by Lemma~\ref{ox1987}, $\{a,b,c\}$ is contained in an $M(K_4)$-minor of $M_X$. Hence, $M_X$ has an induced minor $N$ that contains $\{a,b,c\}$ and is isomorphic to $M(K_4)$ or $F_7$. Then $N$ has an element $x$ such that $N/x$ has $\{a,b,c\}$ as a triangle and has elements in parallel with each of $b$ and $c$. It follows that $M_Y$ is an induced minor of $M$, and the lemma follows by symmetry. 
\end{proof}

For $r \ge 3$, a \textit{binary $r$-spike with tip $t$} is the vector matroid $M_r$ of the binary matrix $[I_r|J_r - I_r|\bf{1}]$ where $J_r$ and $\bf{1}$ are the $r \times r$ and $r \times 1$ matrices of all ones, respectively, and $\bf{1}$ is labeled by $t$. Note that $M_r/t$ is obtained from an $r$-element circuit by replacing every element by two elements in parallel. It is not difficult to check that all matroids of the form $M_r \backslash z$,   where $z \in E(M_r) - \{t\}$, are isomorphic. We call $M_r\backslash z$ a \textit{binary $r$-spike with tip $t$ and cotip $t^*$}, where $\{t,z,t^*\}$ is a triangle of $M_r$. Clearly $M_3 \backslash z \cong M(K_4)$.  Also, one can show that $M_r\backslash z$ is self-dual for all $r$.

For the rest of the paper,  when we write $M/e$, we mean the matroid obtained from $M$ by contracting the element $e$ but without simplifying the result. The simplification of $M/e$ is denoted by $\si(M/e)$. 

\begin{lemma}\label{tipandcotip}
    Let $M$ be a simple matroid having an element $e$ such that $M/e$ has a basis $B$ each element of which is in a $2$-circuit. If $M/e$ has at least one element that is not in a $2$-circuit, then $M$ has an induced minor isomorphic to a spike with a tip and cotip.
\end{lemma}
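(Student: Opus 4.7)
My plan is to realize the required spike as an induced restriction of $M$, using $e$ as the tip and the given element $c$ (not in a $2$-circuit of $M/e$) as the cotip, and constructing the legs from a greedy partition of the basis support of $c$. Write $B=\{b_1,\ldots,b_r\}$ for the basis of $M/e$ supplied by the hypothesis, and for each $i$ let $b_i'=e+b_i$, which lies in $E(M)$ because $b_i$ is in a $2$-circuit of $M/e$. Pick any $c\in E(M)-\{e\}$ with $c+e\notin E(M)$ and expand it in the basis $\{e\}\cup B$ of $M$ as $c=\alpha_0 e+\sum_{i\in S}b_i$, choosing $c$ so that $|S|$ is as small as possible. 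A quick case check shows $|S|\ge 2$: the cases $|S|=0$ and $|S|=1$ would force $c\in\{0,e,b_i,b_i'\}$, each impossible.

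The key consequence of this minimality is that for every $T\subsetneq S$ with $|T|\ge 2$, the sum $x_T:=\sum_{i\in T}b_i$ lies in $E(M)$ if and only if its partner $e+x_T$ does: otherwise, whichever of the two were in $E(M)$ would be a cotip candidate with support $T$, contradicting the minimality of $|S|$. Let $\mathcal{F}=\{T\subsetneq S:|T|\ge 2,\ x_T\in E(M)\}$. I will construct a partition $\{T^{(1)},\ldots,T^{(j)}\}$ of $S$ greedily: start from the partition into singletons, and while some proper sub-family of at least two blocks has union in $\mathcal{F}$, replace those blocks by their union. The procedure strictly coarsens the partition and so terminates; because the sub-family merged is always proper, at least one block remains outside the merge, keeping $j\ge 2$ at every step.

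Set $y_l=x_{T^{(l)}}$ and $y_l'=e+y_l$; both lie in $E(M)$, either because $T^{(l)}$ is a singleton $\{i\}$ giving the pair $b_i,b_i'$, or because $T^{(l)}\in\mathcal{F}$ together with the partner observation above. Disjointness of the blocks makes $\{y_1,\ldots,y_j\}$ independent in $M/e$. After possibly swapping one $y_{l_0}$ with its partner $y_{l_0}'$ (a parity adjustment that may be needed when $\alpha_0=1$), we have $c=y_1+y_2+\cdots+y_j$ in $GF(2)$. Take $Y=\{e,c\}\cup\bigcup_{l=1}^{j}\{y_l,y_l'\}$, a set of $2j+2$ elements with $r(Y)=j+1$; I claim $Y$ is a flat of $M$ and $M|Y\cong M_{j+1}\backslash z$, which supplies the required induced minor.

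For the flat claim, every element of $\cl_M(Y)$ has the form $\alpha e+\sum_{l\in L}y_l$ in the ambient $GF(2)$-space. For $|L|\le 1$ it already belongs to $Y$, and for $L=\{1,\ldots,j\}$ it is either $c\in Y$ or $c+e\notin E(M)$. For every proper $L$ with $|L|\ge 2$, the sum becomes $\alpha e+x_{T^*}$ where $T^*=\bigcup_{l\in L}T^{(l)}\subsetneq S$; by the termination of the greedy process $T^*\notin\mathcal{F}$, so $x_{T^*}\notin E(M)$, and by the partner observation $e+x_{T^*}\notin E(M)$ either. The isomorphism $M|Y\cong M_{j+1}\backslash z$ then follows by identifying $e$ with the tip of the spike, the pairs $\{y_l,y_l'\}$ with its $j$ legs, and $c$ with the cotip; the ``missing'' element $z$ of the spike corresponds to the would-be vector $e+c$, which is precisely the one forced to lie outside $E(M)$ by hypothesis. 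The most delicate part is the greedy partition step: one must verify both that every potential extra element in the flat is killed by the termination condition, and that $j$ never collapses to $1$; both hinge on the minimality of $|S|$ forcing ``extras'' to come in partner pairs inside $E(M)$.
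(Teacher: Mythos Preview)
Your proof is correct. Both you and the paper realize the spike as an induced \emph{restriction} of $M$ with tip $e$, but you minimize a different quantity and consequently need more machinery. The paper takes a smallest circuit $C$ of $M/e$ having exactly one element $g$ not in a $2$-circuit, and asserts in one line that minimality forces $M|\cl_M(C\cup\{e\})$ to be a spike with tip $e$ and cotip $g$: any extra element $x$ in that closure would produce (via either $\{x\}\cup\{c_i:i\in I\}$ or $\{g,x\}\cup\{c_i:i\notin I\}$, according as $x$ is or is not in a $2$-circuit) a strictly smaller circuit of the same type. You instead minimize the $B$-support $|S|$ of a cotip candidate $c$; because this is tied to the fixed basis $B$, extra elements in the span can survive in partner pairs, which is exactly what your greedy coarsening handles. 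In effect your final blocks $T^{(l)}$ recover a circuit $\{c,y_1,\dots,y_j\}$ of $M/e$ playing the role of the paper's $C$. Your route is more explicit and makes the flat verification fully transparent; the paper's route is shorter because minimizing over \emph{all} circuits, rather than over fundamental circuits with respect to $B$, eliminates the need for the partition step.
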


\begin{proof} 
Let $f$ be an element of $M/e$ that is not in a 2-circuit. Then, in $M/e$, the fundamental circuit of $f$ with respect to $B$ has every element except $f$ in a 2-circuit. Let $C$ be a smallest circuit of $M/e$ for which, with exactly one exception, every element is in a $2$-circuit. Let $g$ be the exceptional element of $C$. Then the minimality of $C$ implies that 
$M|\cl_M(C\cup \{e\})$ is a spike with tip $e$ and cotip $g$.  
\end{proof}

\begin{corollary}\label{tipcotipK4}
    Let $M$ be a rank-$r$ matroid with $r\geq 3$. If $M$ has an element $e$ such that $M/e$ has a basis, each element of which is in a 
    $2$-circuit, and $M/e$ has at least one element that is not in a $2$-circuit, then $M$ has $M(K_4)$ as an induced minor.
\end{corollary}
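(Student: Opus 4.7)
My plan is to deduce the corollary directly from Lemma~\ref{tipandcotip}, whose hypotheses match this corollary's exactly. That lemma supplies an induced minor of $M$ isomorphic to a binary spike $M_s\backslash z$ with tip $t$ and cotip $t^*$, for some $s\geq 3$. Since $M_3\backslash z\cong M(K_4)$ by the definition preceding Lemma~\ref{tipandcotip}, the case $s=3$ closes out the argument. The remaining task is, for $s\geq 4$, to show that $M_s\backslash z$ has $M_{s-1}\backslash z$ as an induced minor, then iterate down to $s=3$ and invoke transitivity of the induced-minor relation.

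For the rank-reducing step I would work in the standard binary representation $[I_s\mid J_s-I_s\mid\mathbf{1}]$ of $M_s$, whose non-tip elements partition into $s$ legs $\{e_i,\mathbf{1}-e_i\}$, each forming a triangle with the tip. The cotip $t^*$ and the deleted element $z$ sit in one distinguished leg. Pick any other leg $\{e_j,\mathbf{1}-e_j\}$ and contract $e_j$ inside $M_s\backslash z$. The triangle $\{t,e_j,\mathbf{1}-e_j\}$ forces $\mathbf{1}-e_j$ to become parallel to $t$, and a short check confirms that when $s\geq 4$ no other parallel pair is created. Simplification therefore merges only this one pair, and rewriting the matrix modulo the $j$th coordinate produces $[I_{s-1}\mid J_{s-1}-I_{s-1}\mid\mathbf{1}]$ in the reduced basis, with $t$, $t^*$, and $z$ playing exactly the same roles as before. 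Because contract-and-simplify is an induced-minor operation, the output is precisely $M_{s-1}\backslash z$.

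The one place that warrants genuine verification is the claim just sketched: that the contract-and-simplify of $M_s\backslash z$ along a non-cotip leg element returns $M_{s-1}\backslash z$ with tip, cotip, and deleted element preserved, and in particular that $\{t,\mathbf{1}-e_j\}$ is the only parallel class appearing in $M_s/e_j$ for $s\geq 4$. This is the main obstacle, but it is handled by a direct binary-matrix computation. Everything else is bookkeeping with the definitions and with transitivity of the induced-minor relation, so that iterating the reduction from rank $s$ down to rank $3$ exhibits $M(K_4)$ as an induced minor of $M_s\backslash z$, and hence of $M$.
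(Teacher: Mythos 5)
Your argument is correct and is exactly the deduction the paper intends (the corollary is stated without proof): apply Lemma~\ref{tipandcotip} to get a spike $M_s\backslash z$ with tip and cotip as an induced minor, then contract non-cotip leg elements one at a time, each contraction merging only the pair $\{t,\mathbf{1}-e_j\}$, until reaching $M_3\backslash z\cong M(K_4)$. The matrix verification that no other parallel pairs arise for $s\geq 4$ is the right check and goes through, so nothing is missing.
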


The rest of this section is devoted to proving that, when a $3$-connected matroid $M$ has an element $e$ such that neither $M$ nor $M \backslash e$ has $M(K_4)$ as an induced minor, $M$ is a coning with tip $e$ of a triangle-free matroid.

\setcounter{theorem}{12}

\begin{proof}[Proof of Theorem~\ref{trianglethroughe}.]    We argue by induction of $r(M)$. The result is vacuously true when $r(M)=3$. Assume the result holds for $r(M)\leq r-1$ and let $M$ be a matroid satisfying the hypothesis and having $r(M)=r$. We may assume there is an element $f$ such that $\cl_M(\{e,f\})=\{e,f\}$. Let $t$ denote the point of $P_r-E(M)$ on the line spanned by $\{e,f\}$. 
    \begin{sublemma}\label{Mf3conn} $\si(M/f)$ is $3$-connected.
    \end{sublemma}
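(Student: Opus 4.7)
The plan is to argue by contradiction. Suppose $\si(M/f)$ is not $3$-connected; I will produce $M(K_4)$ as an induced minor of either $M$ or $M\backslash e$, contradicting the hypothesis.

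First I translate the failure of 3-connectivity into a vertical 3-separation of $M$ through $f$. Since $M$ is $3$-connected, $M/f$ is connected, so any 2-separation $(X,Y)$ of $\si(M/f)$ with $|X|,|Y|\ge 2$ pulls back through the parallel classes of $M/f$ to a partition $(X',Y')$ of $E(M)-f$. A short connectivity-function calculation, combined with 3-connectedness of $M$, forces $f\in\cl_M(X')\cap\cl_M(Y')$, for otherwise $M$ would inherit a 2-separation. Hence $(A,B):=(X'\cup\{f\},Y')$ is a $3$-separation of $M$ with $f$ in the matroid guts $G_M$; moreover $r(A),r(B)\ge 3$, since if $r(B)=2$ then $B\cup\{f\}$ would be a triangle whose two $B$-elements collapse in $\si(M/f)$ to give $|Y|=1$, contradicting $|Y|\ge 2$. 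Since $f\in G_M$ and $G_M$ is nonempty, Lemma~\ref{fillin3sep} then produces 3-connected induced minors $M_A:=(M|\cl_M(A))+G_P$ and $M_B:=(M|\cl_M(B))+G_P$ of $M$, where $G_P=\cl_{P_r}(A)\cap\cl_{P_r}(B)$ is a 3-point projective line through $f$ forming a triangle in each of $M_A$ and $M_B$.

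Next I do case analysis on the position of $e$. If $e\in A$ with $e\notin\cl_M(B)$ (the main case), then $e\notin E(M_B)$, so $M_B$ is an induced minor of $M\backslash e$, and finding $M(K_4)$ as an induced minor of $M_B$ contradicts $M\backslash e\in\mathcal{N}$. The symmetric case $e\in B$ with $e\notin\cl_M(A)$ uses $M_A$. In the remaining case $e\in G_M$, the assumption $\cl_M(\{e,f\})=\{e,f\}$ combined with $t\notin E(M)$ forces $G_P=\{e,f,t\}$; here finding $M(K_4)$ as an induced minor of $M_A$, and hence of $M$, contradicts $M\in\mathcal{N}$. To locate the $M(K_4)$ in each case, I would take minimal circuits $C_A\subseteq A$ and $C_B\subseteq B\cup\{f\}$ through $f$ (possible because $f$ is in the guts), contract down to triangles $\{f,a_1,a_2\}$ and $\{f,b_1,b_2\}$ in an induced minor of $M$, and examine the rank-3 flat spanned by $\{f,a_1,a_2,b_1,b_2\}$: in $P_r$ this flat is a copy of $F_7$, and the induced restriction to it is $M(K_4\backslash e)$, $M(K_4)$, or $F_7$ according as zero, one, or both of the remaining two $F_7$-points lie in the ground set of the induced minor. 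The middle case closes the argument; the $F_7$ case can be reduced to it by a further contraction or by varying the circuit choices.

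The main obstacle will be upgrading ``$M(K_4)$ as minor'' to ``$M(K_4)$ as induced minor''. Lemma~\ref{ox1987} readily supplies an $M(K_4)$-minor of $M_A$ or $M_B$ containing $G_P$ whenever the corank is at least 3, but, in contrast with the triad situation exploited in the proof of Lemma~\ref{notriad}, the triangle $G_P$ does not obstruct an $F_7$-extension of such a minor, so the minor is not automatically induced. Closing this gap will require either a careful choice of contraction sequence---ensuring that the $M(K_4)$-flat remains closed in the intermediate contracted matroid---or a direct geometric construction using the circuit structure on each side of the 3-separation. Degenerate sub-cases in which $|G_M|\ge 2$ can also drop the corank of $M_A$ or $M_B$ below the threshold for Lemma~\ref{ox1987}, and these will require separate, direct analysis exploiting the additional guts structure.
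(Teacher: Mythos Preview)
Your setup is right: a failure of $3$-connectivity of $\si(M/f)$ yields a vertical $3$-separation of $M$ with $f$ in the guts, Lemma~\ref{fillin3sep} applies, and the cases split on whether $e$ lies in $G_P$. But you have entirely missed the engine of the paper's argument, which is the \emph{induction hypothesis}. The whole proof of Theorem~\ref{trianglethroughe} is by induction on $r(M)$, and this sublemma is proved by applying the inductive statement to the smaller-rank $3$-connected pieces produced by Lemma~\ref{fillin3sep}. When $e\notin G_P$, say $e\in X$, the paper applies induction to $M_A=(M|\cl_M(X))+G_P$ with distinguished element $e$: both $M_A$ and $M_A\backslash e$ are induced minors of $M$ and $M\backslash e$ respectively, so induction says every point of $M_A$ lies in a triangle with $e$; in particular the whole triangle $G_P$ does, which fills the projective plane through $e$ and $G_P$ to an $F_7$ and hands $M\backslash e$ an $M(K_4)$-flat. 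When $e\in G_P$, so $G_P=\{e,f,t\}$, the paper instead applies induction to $M_1+t$ and $M_2+t$ with distinguished element $t$ (noting $(M_i+t)\backslash t=M|\cl(X_i)$ is an induced restriction of $M$), concludes that every element is in a triangle with $t$, and then a short argument forces enough triangles through $e$ to invoke Corollary~\ref{tipcotipK4}.

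Your direct attempt to manufacture $M(K_4)$ by hand runs into exactly the obstacle you flag, and your sketch does not close it: contracting two circuits through $f$ down to triangles produces a rank-$3$ flat that is \emph{at least} $M(K_4\backslash e)$, and nothing in your outline rules out the two bad cases uniformly or shows the needed $M(K_4)$ is a \emph{flat} of the relevant contraction. Moreover, your claim in the case $e\in A\setminus\cl_M(B)$ that ``$M_B$ is an induced minor of $M\backslash e$ because $e\notin E(M_B)$'' is a non sequitur. The way Lemma~\ref{fillin3sep} realises $M_B$ as an induced minor of $M$ is by contracting elements on the $A$-side (to fill in the missing points of $G_P$), and $e$ lives on the $A$-side; you have no control over whether $e$ is among the contracted elements, so you cannot pass to $M\backslash e$. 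The fix for all of this is the one you did not try: use the induction on rank that is already set up in the surrounding proof.
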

    Suppose $\si(M/f)$ is not 3-connected. Then $M$ has a vertical 3-separation $(X,Y)$ with $f\in \cl(X) \cap \cl(Y)$. 
    Let $G_P = \cl_{P_r}(X) \cap \cl_{P_r}(Y)$. Suppose first that 
 $e\not \in G_P$. Then, without loss of generality, $e\in X$. By Lemma~\ref{fillin3sep}, $M$ has $(M|\cl_M(X))+G_P$ as an induced minor. By the choice of $M$, every element of $(M|\cl_M(X))+G_P$ is in a triangle with $e$. In particular, every element of the triangle $G_P$ is in  a triangle with $e$. Therefore, $M\backslash e$ has $M(K_4)$ as an induced minor, a contradiction. Hence, we may assume that $e \in G_P$. Let $M_1=M|\cl(X)$ and $M_2=M|\cl(Y)$. By Lemma~\ref{fillin3sep},  $M_1+t$ is an induced minor $M'_1$ of $M$. Since $M_1'$ is 3-connected and neither $M_1'$ nor $M_1'\backslash t$ has $M(K_4)$ as an induced minor, the choice of $M$ implies that  every element of $M_1$ lies in a triangle of $M_1 + t$ with $t$. By symmetry, every element of $M_2$ lies in a triangle of $M_2 + t$ with $t$. 
 
 We show next that every element of $E(M_2)-\{e,f\}$ is in a triangle of $M_2$ with $e$. Assume that an element $y$ of 
$E(M_2)-\{e,f\}$ is not in such a triangle. 
Then $M/y$ is $M_1'$ with no element parallel to $e$, and $M_1'$ is a proper induced minor of $M$. We deduce that neither $M_1'$ nor $M_1'\backslash e$ has $M(K_4)$ as an induced minor. Hence every element of $M_1+t$ is in a triangle with $e$. Let $a$ be an element  of $E(M_1)-\{e,f\}$. Then $M_1$ has elements $b$ and $c$ such that $\{a,b,e\}$ and $\{a,c,t\}$ are triangles of $M_1+t$. Thus, there is an additional element $d$ of $M_1$ that is on the line $\cl(\{c,e\})$. Hence $\cl_M(\{e,f,a\})$ is a rank-3 flat containing six points, a contradiction. We conclude that every element of $E(M_2) - \{e,f\}$ is in a triangle with $e$. Thus, $M_2/e$ has a basis, each element of which is in a 2-circuit, and $f$ is not in a 2-circuit of $M_2/e$. By Corollary~\ref{tipcotipK4},  $M_2$ has $M(K_4)$ as an induced minor. Hence \ref{Mf3conn} holds.

    Let $M_f = \si(M/f)$. Then, by \ref{Mf3conn}, $M_f$ is a 3-connected induced minor of $M$, and neither $M_f$ nor $M_f\backslash e$ has $M(K_4)$ as an induced minor. By the choice of $M$, every element of $M_f$ is in a triangle with $e$. For a triangle $\{x,y,e\}$ of $M_f$, either $\{x,y,e\}$ or $\{x,y,e,f\}$ is a circuit of $M$. From this, we deduce the following. 
    
\begin{sublemma}\label{trilots} 
Every element of $M$ is in a triangle of $M+t$ with at least one of $e$ and $t$. 
\end{sublemma}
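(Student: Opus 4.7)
My plan is to derive \ref{trilots} directly from the inductive hypothesis applied to $M_f = \si(M/f)$. By \ref{Mf3conn}, $M_f$ is $3$-connected with $r(M_f) = r - 1$. Since the choice of $f$ gives $\cl_M(\{e,f\}) = \{e,f\}$, I would first record that the parallel class of $e$ in $M/f$ is just $\{e\}$, so deletion of $e$ commutes with simplification and $M_f \backslash e \cong \si((M\backslash e)/f)$, an induced minor of $M\backslash e$. Combined with $M_f$ being an induced minor of $M$, this shows that neither $M_f$ nor $M_f\backslash e$ has $M(K_4)$ as an induced minor, so the inductive hypothesis yields that every element of $E(M_f) - \{e\}$ lies in a triangle of $M_f$ with $e$.

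Next, fix $x \in E(M) - \{e,f\}$ and let $[x]$ denote its parallel class in $M/f$. By the induction there is $y \in E(M) - \{e,f\}$ with $[y] \neq [x]$ such that $\{[x], [y], e\}$ is a triangle of $M_f$; equivalently, $\{x, y, e\}$ is a circuit of $M/f$. Because $M$ is binary, either $\{x, y, e\}$ is already a triangle of $M$, so that $x$ is in a triangle with $e$ in $M \subseteq M + t$, or $\{x, y, e, f\}$ is a $4$-circuit of $M$. In the latter case, $\{e, f, t\}$ is a triangle of $M + t$ by the definition of $t$, so the binary symmetric difference $\{x, y, e, f\} \bigtriangleup \{e, f, t\} = \{x, y, t\}$ is a cycle of $M + t$; since $M + t$ is simple and $x, y, t$ are distinct, $\{x, y, t\}$ is a triangle, placing $x$ in a triangle with $t$.

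Finally, $e$ and $f$ each lie in the triangle $\{e, f, t\}$ of $M + t$, so \ref{trilots} follows.

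The main delicate point I expect is the opening reduction showing that $M_f \backslash e$ is an induced minor of $M \backslash e$; it hinges on $[e] = \{e\}$ in $M/f$, which in turn depends on our choice that $f$ lies in no triangle with $e$ in $M$. Once this bookkeeping is in place, everything that follows is routine binary-matroid arithmetic, leveraging the fact that in the binary setting a circuit of $M/f$ lifts to either the same set or that set together with $f$ in $M$.
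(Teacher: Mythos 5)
Your proof is correct and follows essentially the same route as the paper: apply the inductive hypothesis to $M_f=\si(M/f)$ (using \ref{Mf3conn}), lift each triangle $\{x,y,e\}$ of $M_f$ to a circuit $\{x,y,e\}$ or $\{x,y,e,f\}$ of $M$, and in the latter case take the symmetric difference with $\{e,f,t\}$ to get a triangle through $t$. You merely spell out details the paper leaves implicit, notably that $\cl_M(\{e,f\})=\{e,f\}$ makes $[e]$ trivial in $M/f$, so that $M_f\backslash e$ is indeed an induced minor of $M\backslash e$.
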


Next we prove the following assertion.

    \begin{sublemma}\label{noeandtmeet}
 In $M+t$, among the triangles other than $\{e,f,t\}$, no triangle containing $e$ meets a triangle containing $t$. 
    \end{sublemma}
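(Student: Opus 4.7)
The plan is to argue by contradiction. Suppose there exist triangles $T_e$ and $T_t$ of $M+t$, both different from $\{e,f,t\}$, with $e\in T_e$, $t\in T_t$, and $T_e\cap T_t\ne\emptyset$. I would first pin down the shared element: since any two of $e,f,t$ span the same three-point projective line $\{e,f,t\}$, the only triangle of $M+t$ containing any two of these is $\{e,f,t\}$ itself, so $T_e\cap\{f,t\}=\emptyset$ and $T_t\cap\{e,f\}=\emptyset$. Hence the shared element $x$ lies in $E(M)-\{e,f\}$, and I may write $T_e=\{e,a,x\}$ and $T_t=\{t,b,x\}$ for some $a,b\in E(M)$. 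Since $M+t$ is binary, $a=e+x$ and $b=t+x=e+f+x$ in $P_r$.

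Next I would examine the seven-point projective plane $\cl_{P_r}(\{e,f,x\})=\{e,f,x,t,a,b,f+x\}$ and split based on whether $f+x\in E(M)$. If $f+x\in E(M)$, then $\{e,f,x,a,b,f+x\}\subseteq E(M)$ and, since the only projective point of this plane missing from $E(M)$ is $t$, the flat $\cl_M(\{e,f,x\})$ is exactly this six-element set. As a matroid it is $F_7$ with a single point removed, hence isomorphic to $M(K_4)$, giving $M$ an induced $M(K_4)$-restriction and contradicting $M\in\mathcal{N}$.

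The harder case is $f+x\notin E(M)$, where $\cl_M(\{e,f,x\})=\{e,f,x,a,b\}$ is a rank-$3$ flat with the two triangles $\{e,x,a\}$ and $\{f,a,b\}$ and the $4$-circuit $\{e,f,x,b\}$, isomorphic to $M(K_4\backslash e)$. The plan here is to produce $M(K_4)$ as an induced minor of $M$ by contracting $x$: since $\{e,x,a\}$ is a triangle, $e$ and $a$ become parallel in $M/x$, and the image of this five-element flat is a rank-$2$ set consisting of the three distinct elements $\{[e],f,b\}$ of $\si(M/x)$, i.e., a triangle. In particular $f$ lies in a triangle with $e$ in $\si(M/x)$, and, because $\mathcal{N}$ is closed under induced minors, both $\si(M/x)$ and $\si(M/x)\backslash e$ lie in $\mathcal{N}$. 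If $\si(M/x)$ is $3$-connected, the induction hypothesis applied to $\si(M/x)$ yields a triangle through $e$ and every other element; combining such a triangle with the $M(K_4\backslash e)$ structure on $\{e,f,x,a,b\}$, I would lift back to an element $y\in E(M)$ for which $\cl_M(\{e,f,y\})$ is a six-point flat isomorphic to $M(K_4)$, contradicting $M\in\mathcal{N}$. The main obstacle will be handling the case where $\si(M/x)$ is not $3$-connected; there, an argument parallel to the proof of \ref{Mf3conn}---analyzing a vertical $3$-separation of $M$ containing $x$ in its guts and invoking Lemma~\ref{fillin3sep} to fill in projective points across the separator---should allow us to replace $\si(M/x)$ by a $3$-connected induced minor on which the reasoning can be completed.
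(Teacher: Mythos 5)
Your setup and your first case are exactly the paper's opening move: after pinning the shared element down to some $x\in E(M)-\{e,f\}$ and writing the triangles as $\{e,a,x\}$ and $\{t,b,x\}$, the observation that $f+x\in E(M)$ would make $\cl_M(\{e,f,x\})$ a six-point flat isomorphic to $M(K_4)$ correctly forces $f+x\notin E(M)$ (this is the paper's missing point $s$). The difficulty is the remaining case, which is the heart of the proof and where your sketch has a genuine gap. To apply the inductive hypothesis of Theorem~\ref{trianglethroughe} to $\si(M/x)$ you must know that neither $\si(M/x)$ nor $\si(M/x)\backslash e$ has $M(K_4)$ as an induced minor. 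The first follows from closure under induced minors, but the second does not: since $\{e,a,x\}$ is a triangle, $e$ lies in the nontrivial parallel class $\{e,a\}$ of $M/x$, so $\si(M/x)\backslash e$ deletes a single non-coloop point and is therefore not an induced minor of $M$; nor does it coincide with $\si((M\backslash e)/x)$, which retains the point $a$. This is precisely why the paper only ever contracts an element $f$ satisfying $\cl_M(\{e,f\})=\{e,f\}$ when running the induction. Moreover, even granting that every element of $\si(M/x)$ is in a triangle with the point $\{e,a\}$, your ``lift back to an element $y$ with $\cl_M(\{e,f,y\})\cong M(K_4)$'' is not substantiated: such a triangle of $\si(M/x)$ only yields a circuit of $M$ of size three or four through $e$ or through $a$, and nothing forces any of these to close up to an $M(K_4)$ flat containing $f$.

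What the paper actually does here is quite different and considerably longer. It first proves that every triangle of $M+t$ containing $t$ lies inside the plane $F'=\cl_{P_r}(\{e,f,x\})$: assuming a triangle through $t$ outside $F'$, it assembles a rank-$4$ flat whose restriction to nine named points is the unique single-element extension of $AG(3,2)$, rules out every larger intersection of $E(M)$ with that flat by inspecting complements in $P_4$, and then extracts an $M(K_4)$ induced minor of $M\backslash e$. With that confinement in hand, \ref{trilots} shows that every element of $E(M)$ outside $\{f,b\}$ (in your labelling) is in a triangle with $e$, and Corollary~\ref{tipcotipK4} --- the spike-with-tip-and-cotip machinery --- then produces an $M(K_4)$ induced minor of $M$, the final contradiction. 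Your sketch has no analogue of either step, and the deferred case in which $\si(M/x)$ is not $3$-connected cannot be handled ``parallel to \ref{Mf3conn}'' for the same reason as above: that argument, too, relies on $e$ remaining in a trivial parallel class after the contraction.
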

    
To see this, first suppose there are elements $a,b$, and $c$ in $E(M)$ such that $\{a,b,e\}$ and $\{a,c,t\}$ are distinct triangles in $M+t$. Then, as $M$ has no $M(K_4)$ induced minor, there an element $s$ of $P_r - E(M + t)$ in the rank-3 projective flat $F'$ spanned by $\{e,f,a\}$. 
    
Assume there is a triangle $\{x,y,t\}$  in $M+t$ that is not contained in $F'$. As neither $\si(M/x)$ nor $\si(M/y)$ has $M(K_4)$ as a flat, $M$ has elements 
$z$ and $w$ such that $\{x,w,s\}$ and $\{y,z,s\}$ are  triangles of $M+s$. The projective flat $F$ spanned by $\{e,f,a,b,c,x,y,z,w\}$ 
has rank $4$. Its restriction to the  nine listed points is isomorphic to  the unique single-element extension of $AG(3,2)$.

We show next that $E(M) \cap F = \{a,b,c,e,f,w,x,y,z\}$. Assume this fails. Then, as neither  $s$ nor $t$  is in $E(M)$, the matroid $M|(F\cap E(M))$ must be isomorphic to the complement in $P_4$ of one of  $U_{2,2}$, $U_{2,3}$, $U_{3,3}$, $M(C_4)$, $U_{1,1}\oplus U_{2,3}$,   or the single-element deletion of $M(K_4)$.
In each case except when the complement is $M(C_4)$, the matroid $M|(F\cap E(M))$ has  a flat isomorphic to $M(K_4)$, a contradiction. 
In the exceptional case, $(M\backslash e)|(F\cap E(M\backslash e))$ is isomorphic to the complement in $P_4$ of $M(C_4) \oplus U_{1,1}$ or of the single-element deletion of $M(K_4)$. In these two cases, $(M\backslash e)|(F\cap E(M\backslash e))$
 has  a flat isomorphic to $M(K_4)$, a contradiction. 
We conclude that $E(M) \cap F = \{a,b,c,e,f,w,x,y,z\}$. 

Let $M' = M|\{a,b,c,e,f,w,x,y,z\}$.  
Then $M'/x$ is an induced minor of $M$. This matroid is obtained from  a copy of $F_7$ with ground set $\{a,b,c,e,f,w,x,y\}$ by adding $z$ in parallel to $b$. In particular, in $M'/x$, the element 
$e$ is in a parallel class of size one, so $\si(M'/x\backslash e)$ is an induced minor of $M\backslash e$ that is isomorphic to $M(K_4)$, a contradiction. 
We deduce that every triangle of $M+t$ containing $t$ is contained in $F'$. 

We now know that  every element of $E(M) - \{c,f\}$   is in a triangle of $M$ containing $e$. As $M$ is 3-connected, it has $\{c,f\}$ as a  coindependent set.   Hence $M/e$  has a basis that avoids $\{c,f\}$. Therefore, $M/e$ has a basis  each element of which is in a 2-circuit. As $c$ is an element of $M/e$ that is not in a 2-circuit,   Corollary~\ref{tipcotipK4} implies that $M$ has $M(K_4)$ as an induced minor, a contradiction. 
We conclude that \ref{noeandtmeet} holds.

  \begin{sublemma}\label{noftriangle}  
  $M$ has no triangle containing $f$. 
  \end{sublemma}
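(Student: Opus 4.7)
The plan is to suppose for contradiction that $M$ has a triangle $\{f,x,y\}$ containing $f$, and to derive a contradiction by exhibiting an $M(K_4)$-restriction of $M$. Because $M$ is binary and $y = f+x$, we have $t+x = e+y$ and $t+y=e+x$ in $P_r$, identities that drive the whole argument.

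The first key step will be to show that $e+x$ and $e+y$ both lie in $E(M)$. By \ref{trilots}, $x$ lies in a triangle of $M+t$ with $e$ or with $t$. If the triangle is $\{e,x,e+x\}$, then $e+x \in E(M)$ immediately. Otherwise the triangle is $\{t,x,t+x\} = \{t,x,e+y\}$, forcing $e+y \in E(M)$. But then $\{e,y,e+y\}$ is itself a triangle of $M$ (three elements of $E(M)$ summing to zero), and this triangle, which contains $e$, meets the triangle $\{t,x,e+y\}$, which contains $t$, at the element $e+y$. Since neither of these triangles equals $\{e,f,t\}$, this contradicts \ref{noeandtmeet}. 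Hence $e+x \in E(M)$, and the same argument with the roles of $x$ and $y$ interchanged gives $e+y \in E(M)$.

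With both of these points present, $\{e,x,e+x\}$, $\{e,y,e+y\}$, $\{f,x,y\}$, and $\{f,e+x,e+y\}$ are all triangles of $M$. Now set $F = \cl_M(\{e,f,x\})$. The rank-$3$ projective flat of $P_r$ spanned by $\{e,f,x\}$ consists of the seven points $\{e,f,t,x,y,e+x,e+y\}$, of which only $t$ lies outside $E(M)$. Consequently $F$ is the six-element set $\{e,f,x,y,e+x,e+y\}$, and $M|F$ is a rank-$3$ matroid on six elements with exactly the four triangles listed above, so $M|F \cong M(K_4)$. This contradicts $M \in \mathcal{N}$ and completes the proof. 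The main obstacle is the case analysis in the middle step; it is easy to overlook that, because $y=f+x$, the element $t+x$ coincides with $e+y$, which is precisely what forces the $e$-triangle and the $t$-triangle to meet and triggers \ref{noeandtmeet}.
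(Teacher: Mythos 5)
Your proof is correct and follows essentially the same route as the paper's: working inside the Fano plane spanned by $\{e,f,x\}$ and using \ref{trilots} together with \ref{noeandtmeet} to locate the triangles through $x$ and $y$. The only cosmetic difference is that you close by exhibiting an explicit $M(K_4)$-flat, whereas the paper obtains its final contradiction from a second application of \ref{noeandtmeet}.
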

  
Assume that $M$ has such a triangle $T$. Then one easily checks using \ref{trilots} that, in $M+t$, the flat spanned by $T \cup \{e\}$ contains distinct intersecting triangles other than $\{e,f,t\}$, one containing $e$ and one containing $t$,  a contradiction to  \ref{noeandtmeet}.

Let $\{g,h,t\}$ be a triangle of $M+t$ different from $\{e,f,t\}$. Then, by \ref{noeandtmeet} and \ref{noftriangle}, both 
$\cl_M(\{e,g\})$  and  $\cl_M(\{f,g\})$ contain exactly two elements. Let $u$ and $s$ be the points in $P_r-E(M)$ on the lines $\cl_{P_r}(\{e,g\})$ and $\cl_{P_r}(\{f,g\})$, respectively. Then, by replacing $\{e,f,t\}$  by $\{e,g,u\}$ and by $\{e,h,s\}$, we deduce from \ref{trilots} and \ref{noeandtmeet} that, in $M+u$, every element of $E(M) - g$ is in a triangle with exactly one of $e$ and $u$; and, in $M+s$, every element of $E(M) - h$ is in a triangle with exactly one of $e$ and $s$.

\begin{sublemma}
\label{atmostone}
$M+t$ has at most one triangle other than $\{e,f,t\}$ containing $t$.
\end{sublemma}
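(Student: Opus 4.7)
The plan is to assume for contradiction that $M+t$ has two triangles $\{g_1,h_1,t\}$ and $\{g_2,h_2,t\}$ through $t$ distinct from $\{e,f,t\}$. The symmetry already invoked in the paragraph after \ref{noftriangle} allows each of $g_1,h_1,g_2,h_2$ to play the role of $f$, so the analogues of \ref{Mf3conn}, \ref{trilots}, \ref{noeandtmeet}, and \ref{noftriangle} all hold after the substitution; in particular $M$ has no triangle through any of $g_1,h_1,g_2,h_2$, and $M/g_1$ is simple and $3$-connected. The rank-$3$ projective flat of $P_r$ spanned by $\{g_1,h_1,g_2,h_2,t\}$ contains, in addition to these five points, the two points $a := g_1+g_2$ and $b := g_1+h_2$; if either lies in $E(M)$, then $\{g_1,g_2,a\}$ or $\{g_1,h_2,b\}$ would be a triangle of $M$ through $g_1$, contradicting the above. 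So $a,b\notin E(M)$.

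The next step is to introduce the points $a' := e+a$ and $c := f+a$ in the ambient rank-$4$ projective flat spanned by $\{e,f,g_1,h_1,g_2,h_2\}$, and to show using \ref{trilots} (together with the identities $e+a'=a$, $t+a'=c$, $e+c=b$, $t+c=a'$) that $a'\in E(M)$ if and only if $c\in E(M)$, splitting into two cases. If neither lies in $E(M)$, the plan is to apply the inductive hypothesis of Theorem~\ref{trianglethroughe} to $M/g_1$, which is a proper $3$-connected induced minor of $M$ and whose deletion by $e$ is an induced minor of $M\backslash e$, so the hypotheses are inherited. The inductive conclusion forces $g_2$ to lie in a triangle with $e$ in $M/g_1$, but this requires $e+g_2$ or $e+g_1+g_2=a'$ to lie in $E(M)$; neither does (the former by the $2$-point-flat observation recorded just before \ref{atmostone}), a contradiction. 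If instead $a',c\in E(M)$, the plan is to exhibit $M(K_4)$ as an induced minor of $M\backslash e$: the set $S:=\{f,g_1,h_1,g_2,h_2,a',c\}$ has rank $4$ in $M$ (using $h_1+f+g_1=e$) and is a flat of $M\backslash e$; on contracting $f$ and simplifying, a short coordinate calculation (with $e=e_1,\,f=e_2,\,g_1=e_3,\,g_2=e_4$) identifies the six images in $P_r/f$ as precisely the six points of the rank-$3$ subspace on $\{e_1,e_3,e_4\}$ other than $e_1$, so the resulting matroid is $F_7\setminus\{e_1\}\cong M(K_4)$, contradicting the hypothesis of Theorem~\ref{trianglethroughe}.

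The main obstacle is structuring the case split so that no branch survives. One must first eliminate every candidate point of $E(M)$ inside the rank-$3$ plane spanned by the two extra triangles (via the symmetric analogue of \ref{noftriangle}), and then, in the larger rank-$4$ flat, dichotomise on the pair $a',c$ to feed either the inductive hypothesis (when both are absent) or a direct identification of $M(K_4)$ as a contraction of a rank-$4$ flat of $M\backslash e$ (when both are present). The most delicate piece is verifying that in the latter case, after contracting $f$, the six surviving elements fill out exactly the points of a rank-$3$ subspace missing only the image of $e$; this is what makes the contracted flat isomorphic to $M(K_4)$ and produces the required contradiction with the hypothesis on $M\backslash e$.
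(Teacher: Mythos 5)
Your proof is correct, and its core coincides with the paper's: both arguments work inside the rank-$4$ projective flat spanned by $\{e,f\}$ and the two putative extra triangles through $t$, identify $E(M)$ there with the eight odd-weight points (an $AG(3,2)$ flat), and then contract one element and delete $e$ to exhibit $M(K_4)$ as an induced minor of $M\backslash e$; your $a'$ and $c$ are exactly the paper's $z$ and $w$. You diverge in two places. First, to get $a',c\in E(M)$ the paper just applies the symmetric statement recorded in the paragraph before \ref{atmostone}: since $e+g_2\notin E(M)$ by \ref{noeandtmeet}, the element $g_2$ must be in a triangle of $M+u_1$ with $u_1=e+g_1$, so $u_1+g_2=a'\in E(M)$ at once. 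Your Case 1 is therefore vacuous; the detour through the inductive hypothesis applied to $M/g_1$ is nonetheless valid (the hypotheses are inherited, and $M/g_1$ is simple and $3$-connected by the symmetric forms of \ref{noftriangle} and \ref{Mf3conn}), just longer than necessary. Second, the paper does not rule out the points $a=g_1+g_2$ and $b=g_1+h_2$ from $E(M)$, and so must separately dispose of the one- and two-element extensions of $AG(3,2)$ (the spike case and the case yielding a violation of \ref{noeandtmeet}); your observation that either point would put a triangle through $g_1$, contradicting the symmetric \ref{noftriangle}, eliminates those cases at the outset and is a genuine streamlining. The final coordinate computation (contract $f$, delete $e$, obtain $PG(2,2)$ with one point removed, which is $M(K_4)$) checks out.
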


Suppose that $\{x,y,t\}$ is a triangle of $M+t$ that differs from both $\{e,f,t\}$ and $\{g,h,t\}$. By \ref{noeandtmeet}, for the triangles $\{e,x,i\}$ and $\{e,y,j\}$ of $P_r$, neither $i$ nor $j$ is in $E(M)$. Thus, $M+u$ has triangles $\{u,x,z\}$ and $\{u,y,w\}$ for some  $z$ and $w$ in $E(M)$. Now $M|\{e,f,g,h,x,y,w,z\}\cong AG(3,2)$. Let $F=\cl_M(\{e,f,g,h,x,y,w,z\}$. Then $\cl_{P_r}(F)-F$ contains $\{i,j,s,t,u\}$ and $P_r|\{i,j,s,t,u\}$ is a single-element deletion of $M(K_4)$. It follows that $M|F$ is one of $AG(3,2)$, the unique rank-4 single-element extension of $AG(3,2)$, or the unique rank-4 2-element extension of $AG(3,2)$. In the last case, one can easily check that, for some $p$ in $\{s,t,u\}$, the matroid $M+p$ has a triangle containing $e$ that meets a triangle containing $p$ and does not contain $\{e,p\}$, a contradiction to \ref{noeandtmeet}. In the second case, $M|F$ is a rank-4 binary spike with a tip and this tip is not equal to $e$. In that and the first case taking $a$ to be an element of $F$ that differs from $e$ and from the spike tip when it is present, we have that $\si((M|F)/a\backslash e)\cong M(K_4)$, a contradiction. Hence \ref{atmostone} holds.

We now know that either every element of $M$ except $f$ is in a triangle with $e$, or  $M$ has exactly three elements $f$, $g$, and $h$ that are  not in triangles with $e$. In the first case, $M/e$ certainly has a basis  each element of which is in a 2-circuit. As $\{e,f\}$ is a flat of $M$,  by Corollary~\ref{tipcotipK4}, $M$ has an induced minor isomorphic to $M(K_4)$, a contradiction. 
We deduce that $f$, $g$, and $h$  are 
  the only elements of $E(M) - e$ that are not in a triangle with $e$. By Lemma~\ref{notriad}, $M$ has no triads.  Hence $r(M) = r(M\backslash \{f,g,h\})$. Thus, $M/e$  has a basis avoiding $\{f,g,h\}$. Each element of this basis is in a 2-circuit of $M/e$. Also, $f$ is in a parallel class of size one in $M/e$. Thus, by Corollary~\ref{tipcotipK4}, $M$ has $M(K_4)$ as an induced minor, a contradiction. 
    \end{proof}

\end{document}